\documentclass[reqno,11pt]{amsart}
\pdfoutput1

\usepackage{amssymb,color,hyperref,mathrsfs,stmaryrd, tikz-cd}
\usepackage{amsmath}
\usepackage{mathtools}

\usepackage{xcolor}
\usepackage[curve,matrix,arrow]{xy}

\setlength{\topmargin}{-5mm}
\setlength{\oddsidemargin}{1mm}
\setlength{\evensidemargin}{5mm}
\setlength{\textwidth}{163mm}
\setlength{\textheight}{230mm}

\newtheorem{theorem}{Theorem}[section]
\newtheorem{lemma}[theorem]{Lemma}

	\author{Julian Kaspczyk}
	\title{On finite groups with given $IC\Phi$-subgroups}
	\address{Technische Universität Dresden, Institut für Algebra, 01069 Dresden, Germany}
	\email{julian.kaspczyk@gmail.com}

	\begin{document}
			\keywords{finite groups, $IC\Phi$-subgroups, abelian, $2$-nilpotent, nilpotent, solvably saturated formations, maximal subgroups, $2$-maximal subgroups, $3$-maximal subgroups}
		\subjclass[2010]{20D10, 20D15, 20D20}
		\maketitle
		
		\begin{abstract}
		A subgroup $H$ of a group $G$ is said to be an $IC\Phi$-subgroup of $G$ if $H \cap [H,G] \le \Phi(H)$. We analyze the structure of a finite group $G$ under the assumption that some given subgroups of $G$ are $IC\Phi$-subgroups of $G$. A new characterization of finite abelian groups and some new criteria for $2$-nilpotence and nilpotence of finite groups will be obtained. Moreover, we will obtain two criteria for a finite group to lie in a given solvably saturated formation containing the class of finite supersolvable groups. 
		\end{abstract}
		
		%\keywords{
			%	fusion systems \sep finite groups \sep finite simple groups \sep linear groups \sep unitary groups \sep groups of Lie type} 
		%\MSC[2010] 20D05 \sep 20D06 \sep 20D20
		%\end{keywordsname}
		
		\section{Introduction}
		All groups in this paper are implicitly assumed to be finite. Our notation and terminology are standard. The reader is referred to \cite{Gorenstein, Huppert, KurzweilStellmacher} for unfamiliar definitions on groups and to \cite{Guo2000} for unfamiliar definitions on classes of groups. 
		
		Given a group $G$ and a subgroup $H$ of $G$, we say that $H$ is an \textit{$IC\Phi$-subgroup} of $G$ provided that $H \cap [H,G] \le \Phi(H)$. This concept was introduced by Gao and Li in \cite{GaoLi2021} and further investigated by the author in \cite{Kaspczyk2021}. The papers \cite{GaoLi2021} and \cite{Kaspczyk2021} contain results that constrain the structure of a group $G$ under the condition that some given subgroups of $G$ are $IC\Phi$-subgroups of $G$. The following theorem is the main result of \cite{Kaspczyk2021}.  
		
		\begin{theorem}
			\label{theorem_first_paper}
			(\cite[Theorem 1.3]{Kaspczyk2021}) Let $p$ be a prime dividing the order of a group $G$, and let $P$ be a Sylow $p$-subgroup of $G$. Suppose that there is a subgroup $D$ of $P$ with $1 < |D| \le |P|$ such that any subgroup of $P$ with order $|D|$ is an $IC\Phi$-subgroup of $G$. If $|D| = 2$ and $|P| \ge 8$, assume moreover that any cyclic subgroup of $P$ with order $4$ is an $IC\Phi$-subgroup of $G$. Then $G$ is $p$-nilpotent.
		\end{theorem}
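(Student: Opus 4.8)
The approach is a minimal-counterexample argument that reduces to the case where $G$ is a minimal non-$p$-nilpotent group, after which Itô's structure theorem together with a short computation in the module $P/\Phi(P)$ finishes things off. I would begin by recording the basic permanence properties of $IC\Phi$-subgroups. If $H\le K\le G$ and $H$ is an $IC\Phi$-subgroup of $G$, then it is one of $K$, since $[H,K]\le[H,G]$. If $N\trianglelefteq G$ and $N\le H$, then $H/N$ is an $IC\Phi$-subgroup of $G/N$: one has $[H/N,G/N]=[H,G]N/N$ and $\Phi(H/N)=\Phi(H)N/N$ (the latter because $H$ is a $p$-group), while Dedekind's modular law gives $H\cap[H,G]N=(H\cap[H,G])N$, so that $(H/N)\cap[H/N,G/N]=(H\cap[H,G]N)/N\le\Phi(H)N/N$. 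A variant of this argument, using $HN/N\cong H$, shows that $HN/N$ is an $IC\Phi$-subgroup of $G/N$ whenever $(|N|,|H|)=1$. Finally, a subgroup $H$ of order $p$ is an $IC\Phi$-subgroup of $G$ exactly when $H\cap[H,G]=1$.

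Now let $G$ be a counterexample of least order. By the coprime variant above the hypotheses pass to $G/O_{p'}(G)$, and $G$ is $p$-nilpotent if and only if $G/O_{p'}(G)$ is, so $O_{p'}(G)=1$. A further sequence of reductions then brings one to the case $|D|=p$: when $|D|<|P|$ one picks a maximal subgroup $P_1$ of $P$ (so $P\le N_G(P_1)$) and argues, by minimality, either directly with $N_G(P_1)$ and transfer, or — when $P_1\trianglelefteq G$, so that $O_p(G)\ne1$ — by quotienting by a minimal normal subgroup $L\le O_p(G)$, which lowers $|D|$; the only subtlety is to see that the auxiliary hypothesis on cyclic subgroups of order $4$ survives the quotient, which is handled by inspecting the cases in which the Sylow $p$-subgroup of the quotient is too small for it to matter. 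Once $|D|=p$, every proper subgroup of $G$ is $p$-nilpotent — trivially if it has order prime to $p$, and otherwise because the hypotheses restrict to it via a subgroup of order $p$ of its Sylow $p$-subgroup — so $G$ is a minimal non-$p$-nilpotent group.

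By Itô's theorem, $G=P\rtimes\langle y\rangle$ where $P\trianglelefteq G$ is the Sylow $p$-subgroup, $\langle y\rangle$ is a cyclic Sylow $q$-subgroup with $q\ne p$, $\Phi(P)\le Z(G)$, $[P,y]=P$, the module $V:=P/\Phi(P)$ is an irreducible $\mathbb{F}_p\langle y\rangle$-module, and $\exp(P)=p$ if $p$ is odd while $\exp(P)\mid4$ if $p=2$. I would then pick a cyclic subgroup $C=\langle x\rangle$ of $P$ with $x\notin\Phi(P)$ of least possible order. If $p$ is odd, $|C|=p$. If $p=2$ and $P$ has an involution outside $\Phi(P)$, then $|C|=2$. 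If $p=2$ and every involution of $P$ lies in $\Phi(P)$, then the exponent bound forces every element of $P\setminus\Phi(P)$ to have order $4$, so $|C|=4$ and necessarily $|P|\ge8$. In each case $C$ is an $IC\Phi$-subgroup of $G$ — the auxiliary hypothesis being exactly what is needed when $|C|=4$. Since $[P,y]=P$, the endomorphism $y-1$ of $V$ is surjective, hence bijective, so $C_V(y)=0$; and since $\bar x\ne0$ with $V$ irreducible, $\bar x$ generates $V$. A commutator computation modulo $\Phi(P)\le Z(G)$ identifies the image of $[C,G]$ (a subgroup of $P$, as $P\trianglelefteq G$) in $V$ with the cyclic $\langle y\rangle$-submodule generated by $(y-1)\bar x$, which is a nonzero element, so this image is all of $V$. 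Hence $[C,G]\Phi(P)=P$, and therefore $[C,G]=P$ by the Frattini property, so $C\le[C,G]$; but then $C=C\cap[C,G]\le\Phi(C)\subsetneq C$, contradicting that $C$ is an $IC\Phi$-subgroup. This contradiction proves the theorem.

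The main obstacle is the reduction of the parameter $|D|$ to the value $p$: the localization-at-$N_G(P_1)$ step requires a transfer argument to be carried out, and the inductive quotient step must be arranged so that the hypothesis on order-$4$ cyclic subgroups is not lost, which forces one to analyze by hand the configurations with a small Sylow $p$-subgroup in the quotient. A secondary difficulty is the occurrence of the case $p=2$, $\exp(P)=4$ in the final analysis — for instance $P\cong Q_8$, where every involution lies in $\Phi(P)$ — which is precisely the situation that makes the hypothesis on cyclic subgroups of order $4$ unavoidable.
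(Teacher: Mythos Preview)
This statement is not proved in the present paper; it is quoted verbatim from \cite{Kaspczyk2021} as background for the results that follow (note the sentence ``The following theorem is the main result of \cite{Kaspczyk2021}'' immediately preceding it). There is therefore no proof in this paper against which to compare your attempt.

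On the merits of your argument itself: the closing phase --- once $G$ has been reduced to a minimal non-$p$-nilpotent group with $|D|=p$ --- is correct and cleanly done. The use of It\^o's structural description, the irreducibility of $V=P/\Phi(P)$ as an $\mathbb{F}_p\langle y\rangle$-module, the bijectivity of $y-1$ on $V$, and the identification of the image of $[C,G]$ in $V$ with the cyclic submodule generated by $(y-1)\bar x$ all combine exactly as you claim. Your case split on the order of the cyclic subgroup $C$ also correctly isolates why the auxiliary hypothesis on cyclic subgroups of order~$4$ is needed precisely when $p=2$ and every involution of $P$ lies in $\Phi(P)$.

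The gap is exactly where you place it, and it is real. Your sketch of the reduction from arbitrary $|D|$ to $|D|=p$ is a list of ingredients rather than an argument. In the branch ``$N_G(P_1)<G$ and use transfer'' you do not say which transfer theorem is being invoked or why $p$-nilpotence of $N_G(P_1)$ for one (or all) maximal $P_1<P$ forces $p$-nilpotence of $G$. In the branch ``$P_1\trianglelefteq G$, quotient by a minimal normal $L\le O_p(G)$'', the hypothesis does descend to $G/L$ with new parameter $|D|/|L|$ \emph{provided} $|L|<|D|$ (subgroups of $P/L$ of that order are exactly $H/L$ with $L\le H\le P$, $|H|=|D|$, and then Lemma~\ref{2.1}(2) applies), but you have not dealt with the case $|L|\ge |D|$, nor have you actually carried out the promised inspection showing that the auxiliary order-$4$ hypothesis survives or becomes vacuous in the quotient. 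Since this reduction is, as you yourself say, the main obstacle, the proposal as written does not constitute a proof.
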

	
	The goal of the present paper is to further study how the structure of a group is influenced by its $IC\Phi$-subgroups. Our results show, together with \cite{GaoLi2021} and \cite{Kaspczyk2021}, that one often gets very rich information about a group when some of its subgroups are assumed to be $IC\Phi$-subgroups. 
	
	\subsection*{Groups some of whose primary subgroups are $IC\Phi$-subgroups}
		Let $Q_8$ denote the quaternion group with order $8$. Recall that a group $G$ is said to be \textit{$Q_8$-free} if no section of $G$ is isomorphic to $Q_8$. Our first main result is the following improvement of Theorem \ref{theorem_first_paper} and \cite[Theorem 3.1]{GaoLi2021}.
		
		\begin{theorem}
			\label{theorem_Q8_free_groups} 
			Let $G$ be a $Q_8$-free group such that any subgroup of $G$ with order $2$ is an $IC\Phi$-subgroup of $G$. Then $G$ is $2$-nilpotent. 
		\end{theorem}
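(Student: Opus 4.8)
The plan is to argue by contradiction via a minimal counterexample, reducing to the structure theory of minimal non-$2$-nilpotent groups. First I would observe that both hypotheses pass to subgroups: any $H\le G$ is again $Q_8$-free, and if $\langle t\rangle\le H$ with $|t|=2$ then $\langle t\rangle\cap[\langle t\rangle,H]\le\langle t\rangle\cap[\langle t\rangle,G]\le\Phi(\langle t\rangle)$, so every subgroup of order $2$ of $H$ is an $IC\Phi$-subgroup of $H$. Hence, taking $G$ to be a counterexample of least order, every proper subgroup of $G$ is $2$-nilpotent, so $G$ is a minimal non-$2$-nilpotent group. By the classical structure theorem for such groups, $G=P\rtimes Q$ where $P=O_2(G)$ is a Sylow $2$-subgroup, $Q=\langle y\rangle$ is cyclic of odd prime-power order acting non-trivially on $P$, $\Phi(P)=P'\le Z(G)$, the chief factor $V:=P/\Phi(P)$ is a non-trivial irreducible $\mathbb{F}_2[Q]$-module (so $C_V(Q)=0$ and $\dim_{\mathbb{F}_2}V\ge 2$), and $\exp(P)\in\{2,4\}$; write $\bar p$ for the image of $p\in P$ in $V$.

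I would then split into two cases. Case 1: $P$ contains an involution $t\notin\Phi(P)$; this automatically holds when $\exp(P)=2$, since then $P$ is generated by involutions and $P\not\le\Phi(P)$. The normal closure $\langle t\rangle^G$ is a normal subgroup of $G$ contained in $P$ whose image $\overline{\langle t\rangle^G}$ is a non-zero submodule of the simple module $V$, hence equals $V$; so $\langle t\rangle^G\Phi(P)=P$ and therefore $\langle t\rangle^G=P$. Consequently $\Phi(P)=P'=(\langle t\rangle^G)'\le[\langle t\rangle^G,\langle t\rangle^G]\le[\langle t\rangle,G]$, using $\langle t\rangle^G=\langle t\rangle[\langle t\rangle,G]$ together with the fact that $[\langle t\rangle,G]$ is normalised by $\langle t\rangle$. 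On the other hand, reducing modulo $\Phi(P)$ — where $P$ acts trivially on $V$ and $[\langle t\rangle,P]\le P'$ — shows that the image of $[\langle t\rangle,G]$ in $V$ is $[\bar t,Q]$, a non-zero, hence full, submodule; thus $[\langle t\rangle,G]\Phi(P)=P$. Writing $t=c\phi$ with $c\in[\langle t\rangle,G]$ and $\phi\in\Phi(P)=P'\le[\langle t\rangle,G]$ forces $t\in[\langle t\rangle,G]$, contradicting that $\langle t\rangle$ is an $IC\Phi$-subgroup of $G$.

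Case 2: every involution of $P$ lies in $\Phi(P)$. Then necessarily $\exp(P)=4$ and every element of $P\setminus\Phi(P)$ has order $4$, and this is where $Q_8$-freeness has to be used. Fix $x\in P\setminus\Phi(P)$ and set $z_0:=x^2$, an involution lying in $\Phi(P)\cap Z(P)$ and centralised by $Q$. Since the $Q$-orbit of $0\ne\bar x$ spans the simple module $V$, pick $q_1,\dots,q_n\in Q$ with $\bar x^{q_1},\dots,\bar x^{q_n}$ an $\mathbb{F}_2$-basis of $V$ and put $x_i:=x^{q_i}$; each $x_i$ has order $4$ with $x_i^2=z_0$, and $\langle x_1,\dots,x_n\rangle\Phi(P)=P$, hence $\langle x_1,\dots,x_n\rangle=P$. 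If the $x_i$ pairwise commuted then $P$ would be abelian and $\Phi(P)=P^2=\langle x_1^2,\dots,x_n^2\rangle=\langle z_0\rangle$ would have order $\le 2$; but then $\Omega_1(P)$, which is characteristic in $P$ and satisfies $|\Omega_1(P)|=|P:\Phi(P)|=|V|\ge 4$, could not lie in $\Phi(P)$, contrary to the case hypothesis. So $w:=[x_i,x_j]\ne 1$ for some $i\ne j$, where $w$ is a central involution; then $\langle x_i,x_j\rangle$ is generated by two elements of order $4$ with common square $z_0$ and commutator $w$, so $\langle x_i,x_j\rangle\cong Q_8$ if $w=z_0$, and $\langle x_i,x_j\rangle/\langle wz_0\rangle\cong Q_8$ if $w\ne z_0$. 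Either way $G$ has a section isomorphic to $Q_8$, a contradiction. This exhausts the cases, so no counterexample exists and $G$ is $2$-nilpotent.

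I expect the main obstacle to be Case 2. There the hypothesis on subgroups of order $2$ is vacuous — every involution of $P$ is central, hence automatically an $IC\Phi$-subgroup — so one must produce a copy of $Q_8$ as a section of $G$ purely from the module structure of $P$, and it is precisely this configuration, realised by $SL_2(3)=Q_8\rtimes C_3$, that shows the $Q_8$-free hypothesis cannot be omitted. A further technical point, needed to make the reduction rigorous, is to locate and cite the correct form of the structure theorem for minimal non-$2$-nilpotent groups, in particular the bound $\exp(P)\le 4$ and the relations $\Phi(P)=P'\le Z(G)$.
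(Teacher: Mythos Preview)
Your argument is correct, and it takes a genuinely different route from the paper's proof. Both begin by reducing to a minimal counterexample $G$ that is minimal non-$2$-nilpotent (hence minimal non-nilpotent), but the paper then invokes Ward's theorem --- that every non-abelian $Q_8$-free $2$-group has a characteristic maximal subgroup --- to force the normal Sylow $2$-subgroup $P$ to be elementary abelian (this is Lemma~\ref{Q8_free_minimal_non_2_nilpotent}), and finishes by applying the earlier Theorem~\ref{theorem_first_paper} with $|D|=2$. Your proof avoids both of these black boxes: you use the finer Schmidt--It\^o structure data ($\exp(P)\le 4$, $\Phi(P)\le Z(G)$, irreducibility of $P/\Phi(P)$) and argue directly. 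In Case~1 you essentially reprove, by hand, the relevant instance of Theorem~\ref{theorem_first_paper}: since $[\langle t\rangle,G]$ is normal in $G$ with non-zero image in the simple module $P/\Phi(P)$, one gets $[\langle t\rangle,G]\Phi(P)=P$ and hence $[\langle t\rangle,G]=P\ni t$, contradicting the $IC\Phi$ condition. In Case~2 you replace Ward's theorem by an explicit construction of a $Q_8$ section from two $Q$-conjugates $x_i,x_j$ of a single element of order $4$ sharing the central square $z_0$. The trade-off is clear: the paper's proof is a few lines long but imports two substantial results, while yours is self-contained modulo the classical structure theorem and gives a transparent reason why $Q_8$-freeness is exactly the right hypothesis. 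One small simplification in your Case~1: you do not actually need $\Phi(P)=P'$; once $[\langle t\rangle,G]\Phi(P)=P$, the Frattini property alone yields $[\langle t\rangle,G]=P$. Likewise in the abelian subcase of Case~2 you may simply quote Lemma~\ref{minimal_non_nilpotent_groups}(3) (abelian $P$ is elementary abelian) instead of the $\Omega_1$ count.
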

	
	The condition in Theorem \ref{theorem_Q8_free_groups} that $G$ is $Q_8$-free is really necessary. For example, $Z(SL_2(3))$ is the only subgroup of $SL_2(3)$ with order $2$, and $Z(SL_2(3))$ is an $IC\Phi$-subgroup of $SL_2(3)$. But $SL_2(3)$ is not $2$-nilpotent. 
	
	Our second main result is a generalization of the following result of Gao and Li. 
	
	\begin{theorem}
	\label{GaoLi_theorem_3_5}
	(\cite[Theorem 3.5]{GaoLi2021}) Let $G$ be a group, and let $E$ be a normal subgroup of $G$ such that $G/E$ is supersolvable. If every maximal subgroup of every Sylow subgroup of $E$ is an $IC\Phi$-subgroup of $G$, then $G$ is supersolvable. 	
	\end{theorem}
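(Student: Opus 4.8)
The plan is to use Theorem~\ref{theorem_first_paper} to control the structure of $E$ and then a module-theoretic argument to show that any normal $p$-subgroup of $G$ all of whose maximal subgroups are $IC\Phi$-subgroups of $G$ contributes only cyclic chief factors to $G$; since $G/E$ is supersolvable, this shows that every chief factor of $G$ is cyclic and hence that $G$ is supersolvable. I will use two elementary, known (\cite{GaoLi2021,Kaspczyk2021}) properties of $IC\Phi$-subgroups: if $H\le K\le G$ and $H$ is an $IC\Phi$-subgroup of $G$, then $H$ is an $IC\Phi$-subgroup of $K$ (because $[H,K]\le[H,G]$); and if $N\trianglelefteq G$ with $N\le H$, then $H/N$ is an $IC\Phi$-subgroup of $G/N$ (because $H\cap[H,G]N=N(H\cap[H,G])\le N\Phi(H)$ by the modular law).

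\emph{Reduction.} For each prime $p$ with $|E|_p\ge p^2$ I apply Theorem~\ref{theorem_first_paper} to $E$, taking $D$ to be a maximal subgroup of a Sylow $p$-subgroup $P$ of $E$: then $|D|=|E|_p/p>1$, the auxiliary hypothesis is vacuous (it would force $|D|=2$ and $|P|\ge 8$, which is impossible as then $|P|=4$), and every maximal subgroup of $P$ is an $IC\Phi$-subgroup of $E$ by the first property above, so $E$ is $p$-nilpotent. Together with Burnside's normal $p$-complement theorem (when $|E|_2\le 2$) and the Feit--Thompson theorem, this makes $E$, hence $G$, solvable. It therefore suffices to prove that every chief factor of $G$ below $E$ is cyclic.

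\emph{Key proposition.} Let $P\trianglelefteq G$ be a $p$-group all of whose maximal subgroups are $IC\Phi$-subgroups of $G$; I claim every chief factor of $G$ below $P$ is cyclic. Regard $V:=P/\Phi(P)$ as an $\mathbb{F}_pG$-module. A maximal subgroup $M$ of $P$ contains $\Phi(P)$ and corresponds to a hyperplane $\overline M$ of $V$; since $\Phi(M)\le\Phi(P)$ and $M\cap[M,G]\le\Phi(M)$, the modular law gives $M\cap[M,G]\Phi(P)=\Phi(P)$, i.e. $\overline M\cap\overline{[M,G]}=0$ in $V$; and since $[M,G]\trianglelefteq\langle M,G\rangle=G$ with $[M,G]\le[P,G]\le P$, the subspace $\overline{[M,G]}$ is a $G$-submodule of $V$, equal to $[\overline M,G]$. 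Thus $V$ has the property that for every hyperplane $U$ the subspace $[U,G]$ (automatically a $G$-submodule, since $(ug-u)h=u(gh-1)-u(h-1)$) meets $U$ trivially. I would then prove by induction on $\dim V$ that any such module has all composition factors one-dimensional: the case $\dim V\le 1$ is trivial; if some hyperplane $U$ has $[U,G]=0$, then $G$ acts trivially on $U$, every full flag of $U$ is a chain of $G$-submodules, and appending $V$ gives a composition series with one-dimensional factors; otherwise $\dim[U,G]=1$ for every hyperplane $U$ (always $\dim[U,G]\le\dim V-\dim U=1$), and for a fixed hyperplane $U_0$ the one-dimensional submodule $W:=[U_0,G]$ is such that, by the modular law again, $V/W$ inherits the property, so induction applied to $V/W$ and then prepending $0<W$ finishes. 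Hence $P/\Phi(P)$ has only one-dimensional composition factors; by the standard fact that for a normal $p$-subgroup $P$ of $G$ this forces every chief factor of $G$ below $P$ to be cyclic (refine a chief series through $\Phi(P)$ and use that the lower-central and $p$-power sections of $P$ are $G$-equivariant quotients of tensor powers of $P/\Phi(P)$), the claim follows.

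\emph{Conclusion.} Fix a prime $p$ dividing $|E|$. If $|E|_p=p$, every chief factor of $G$ of $p$-power order below $E$ has order dividing $p$ and is cyclic. If $|E|_p\ge p^2$, then $E$ is $p$-nilpotent by the reduction, so $K:=O_{p'}(E)\trianglelefteq G$ and $\overline E:=E/K$ is a $p$-group normal in $\overline G:=G/K$; as $K$ is a $p'$-group, the chief factors of $G$ of $p$-power order below $E$ are (up to $G$-isomorphism) exactly the chief factors of $\overline G$ below $\overline E$. Every maximal subgroup of $\overline E$ has the form $MK/K$ with $M$ a maximal subgroup of a Sylow $p$-subgroup of $E$; since $[M,G]\trianglelefteq G$ is normalized by $K$ and $|[M,G]K:[M,G]|$ is a $p'$-number, every Sylow $p$-subgroup of $[M,G]K$ lies in $[M,G]$, so the $p$-group $M\cap[M,G]K$ lies in $[M,G]$, whence $M\cap[M,G]K\le M\cap[M,G]\le\Phi(M)$ and $MK/K$ is an $IC\Phi$-subgroup of $\overline G$. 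The key proposition applied to $\overline E\trianglelefteq\overline G$ then gives that every chief factor of $\overline G$ below $\overline E$ is cyclic, hence so is every chief factor of $G$ of $p$-power order below $E$; as $p$ was arbitrary and $G$ is solvable, every chief factor of $G$ below $E$ is cyclic, and with $G/E$ supersolvable we conclude that $G$ is supersolvable. I expect the module lemma inside the key proposition to be the main obstacle; the reduction is immediate from Theorem~\ref{theorem_first_paper}, and the descent to $G/O_{p'}(E)$ and the passage from $P/\Phi(P)$ back to $P$ are comparatively routine.
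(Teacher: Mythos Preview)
Your proof is correct, but the route differs substantially from the paper's. The paper deduces this theorem from its Theorem~\ref{theorem_formations_1} (with $\mathfrak{F}=\mathfrak{U}$ and $D_i$ a maximal subgroup of $P_i$), whose proof reduces to the case where $E=P$ is a normal $p$-subgroup of $G$ and then applies Theorem~\ref{theorem_first_paper} a \emph{second} time, to each product $PQ$ with $Q\in\mathrm{Syl}_q(G)$, $q\ne p$; this forces $O^p(G)\le C_G(P)$, hence $P\le Z_\infty(G)$, and every chief factor below $P$ is even central. Your key proposition replaces this second application of Theorem~\ref{theorem_first_paper} by a self-contained module argument on $V=P/\Phi(P)$: from the $IC\Phi$-condition you extract that $[U,G]\cap U=0$ for every hyperplane $U$ of $V$, and your inductive lemma that such a $V$ has only one-dimensional composition factors is correct and rather elegant. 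What each approach buys: the paper's is shorter, avoids the passage from $P/\Phi(P)$ back to $P$, yields the stronger conclusion $P\le Z_\infty(G)$, and needs no solvability of $G$ (hence no Feit--Thompson); yours isolates a purely linear-algebraic mechanism behind the $IC\Phi$-hypothesis that is interesting in its own right. The one soft spot in your write-up is the ``standard fact'' that one-dimensional composition factors on $P/\Phi(P)$ propagate to all of $P$: your parenthetical about tensor powers of $P/\Phi(P)$ is not quite accurate (the lower-central quotients are naturally quotients of tensor powers of $P/P'$, not $P/\Phi(P)$), and a clean justification is to observe that your conclusion on $V$ forces $G/C_G(V)$ to embed in the upper-triangular subgroup of $GL(V)$, so by Burnside's basis theorem $G/C_G(P)$ is $p$-closed with abelian $p'$-quotient of exponent dividing $p-1$, whence every irreducible $\mathbb{F}_p(G/C_G(P))$-module is one-dimensional. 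Also, Feit--Thompson is unnecessary: iterating the $p$-nilpotence of $E$ over the primes (smallest first) already gives $E$ a Sylow tower.
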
 

To state our generalization of Theorem \ref{GaoLi_theorem_3_5}, we recall some definitions. A class of groups $\mathfrak{F}$ is said to be a \textit{formation} if $\mathfrak{F}$ is closed under taking homomorphic images and subdirect products. A formation $\mathfrak{F}$ is said to be \textit{saturated} if whenever $G$ is a group with $G/\Phi(G) \in \mathfrak{F}$, we have $G \in \mathfrak{F}$. We say that a formation $\mathfrak{F}$ is \textit{solvably saturated} if whenever $G$ is a group and $N$ is a solvable normal subgroup of $G$ with $G/\Phi(N) \in \mathfrak{F}$, we have $G \in \mathfrak{F}$. Note that every saturated formation is solvably saturated. 

The class of all supersolvable groups is denoted by $\mathfrak{U}$. It is well-known that $\mathfrak{U}$ is a saturated and hence a solvably saturated formation. 

With these definitions at hand, we can now state our second main result. 

\begin{theorem}
	\label{theorem_formations_1} 
	Let $\mathfrak{F}$ be a solvably saturated formation containing $\mathfrak{U}$, let $G$ be a group, and let $E$ be a non-trivial normal subgroup of $G$ such that $G/E \in \mathfrak{F}$. Let $t := |\pi(E)|$, and let $p_1 < \dots < p_t$ be the distinct prime divisors of $|E|$. For each $1 \le i \le t$, let $P_i$ be a Sylow $p_i$-subgroup of $E$. Suppose that, for each $1 \le i \le t$, either $P_i$ is cyclic or there is a subgroup $D_i$ of $P_i$ with $1 < |D_i| \le |P_i|$ such that any subgroup of $P_i$ with order $|D_i|$ is an $IC\Phi$-subgroup of $G$. If $p_1 = 2$, $|D_1| = 2$ and $P_1$ is not $Q_8$-free, assume moreover that any cyclic subgroup of $P_1$ with order $4$ is an $IC\Phi$-subgroup of $G$. Then $G \in \mathfrak{F}$.   
\end{theorem}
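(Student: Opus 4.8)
The plan is a minimal-counterexample argument refining the proof of Theorem~\ref{GaoLi_theorem_3_5}. Suppose the theorem fails and pick a counterexample $(G,E)$ with $|G|$ least and, among those, $|E|$ least. Besides Theorems~\ref{theorem_first_paper} and~\ref{theorem_Q8_free_groups} I would use the routine facts that an $IC\Phi$-subgroup of $G$ lying in a subgroup $K$ is an $IC\Phi$-subgroup of $K$, that the $IC\Phi$-property is conjugation-invariant, and that it descends to a quotient $G/N$ both when $N$ lies in the given subgroup (via Dedekind's law) and when $N$ has order coprime to that of the given subgroup; I would also use Gasch\"utz's theorem $\Phi(N)\le\Phi(G)$ for $N\trianglelefteq G$ and the standard fact that a solvably saturated formation $\mathfrak{F}$ with $\mathfrak{U}\subseteq\mathfrak{F}$ absorbs the $\mathfrak{U}$-hypercentre, i.e.\ $N\le Z_{\mathfrak{U}}(G)$ together with $G/N\in\mathfrak{F}$ forces $G\in\mathfrak{F}$.

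First I would show $E$ is $p_1$-nilpotent: the hypotheses pass to $E$ by restriction and conjugation, so if $P_1$ is cyclic Burnside's normal $p$-complement theorem applies (as $p_1=\min\pi(E)$), if $|D_1|\ge 3$ --- or if $|D_1|=2$ with $P_1$ not $Q_8$-free, in which case $|P_1|\ge 8$ --- Theorem~\ref{theorem_first_paper} applies to $E$, and if $|D_1|=2$ with $P_1$ $Q_8$-free then $E$ is $Q_8$-free and Theorem~\ref{theorem_Q8_free_groups} applies to $E$. So $E$ has a normal $p_1$-complement $V$, which is normal in $G$ (characteristic in $E$), solvable (odd order) and of index $|P_1|$; hence $E$ is solvable, and iterating one gets a Sylow tower of $E$ with $W:=O_{p_t}(E)\trianglelefteq G$. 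If $t\ge 2$, then $1\ne W<E$ and $\gcd(p_t,|E/W|)=1$, so the hypotheses transfer cleanly to $(G/W,E/W)$ and minimality of $|G|$ gives $G/W\in\mathfrak{F}$; but then $(G,W)$ satisfies the hypotheses (its only relevant prime $p_t$ exceeds $2$, so the $Q_8$-clause is vacuous) with $|W|<|E|$, and minimality of $|E|$ yields $G\in\mathfrak{F}$, a contradiction. So $t=1$; write $E=P$, $p=p_1$. If $P$ is cyclic, every $G$-chief factor inside $P$ is cyclic, so $P\le Z_{\mathfrak{U}}(G)$, and $G/P\in\mathfrak{F}\supseteq\mathfrak{U}$ forces $G\in\mathfrak{F}$, again a contradiction; so the $D$-condition holds for $P$.

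The core is a module computation. I claim: if $H/K$ is a $G$-chief factor with $K<H\le P$ and $|K|<|D|\le|H|$, then $|H/K|=p$. Indeed every $X$ with $K\le X\le H$ and $|X|=|D|$ is an $IC\Phi$-subgroup of $G$, hence by Dedekind's law $X/K$ is an $IC\Phi$-subgroup of $G/K$; these $X/K$ are precisely the subgroups of the elementary abelian chief factor $\overline{H}:=H/K$ of order $p^{j}$, where $p^{j}=|D|/|K|$ satisfies $p\le p^{j}\le|\overline{H}|$. So every $j$-dimensional subspace $\overline{X}$ of $\overline{H}$ has $\overline{X}\cap[\overline{X},G/K]=1$; and if $\dim\overline{H}\ge 2$ then, $\overline{H}$ being an irreducible $\mathbb{F}_p[G/K]$-module, $\overline{X}+[\overline{X},G/K]$ is the submodule generated by $\overline{X}$, hence equals $\overline{H}$, so $[\overline{X},G/K]$ is a submodule of dimension $\dim\overline{H}-j$ --- which by irreducibility is $0$ or $\overline{H}$, the first making $\overline{H}$ a trivial, hence $1$-dimensional, irreducible module, the second forcing $j=0$; both absurd. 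Granting the claim, I would finish by proving every $G$-chief factor inside $P$ is cyclic (so $P\le Z_{\mathfrak{U}}(G)$ and $G\in\mathfrak{F}$, the final contradiction): reduce first to $\Phi(P)=1$ by passing to $G/\Phi(P)$, choosing when $|\Phi(P)|\le|D|$ a representative $D'\supseteq\Phi(P)$ of order $|D|$ so the $D$-condition transfers (note $P/\Phi(P)$ is elementary abelian, so $Q_8$-free) and lifting $G/\Phi(E)\in\mathfrak{F}$ to $G$ by solvable saturation with the solvable normal subgroup $E$; then, $P$ being elementary abelian, the claim with $K=1$ shows any minimal normal subgroup $L\le P$ of order $\ge p^{2}$ has $|L|<|D|$, so the $D$-condition transfers to $G/L$ (again choosing $D'\supseteq L$), giving $G/L\in\mathfrak{F}$ by minimality; combining this with closure of $\mathfrak{F}$ under subdirect products (when $G$ has several minimal normal subgroups inside $P$), with the claim applied along a $G$-chief series of $P$ (to control the factor straddling $|D|$), and with $Z_{\mathfrak{U}}(G)$-absorption (to peel off a minimal normal subgroup of order $p$), one obtains $P\le Z_{\mathfrak{U}}(G)$.

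The hard part will be this last step. Unlike the ``maximal subgroup of a Sylow subgroup'' hypothesis of Theorem~\ref{GaoLi_theorem_3_5}, the ``there is a $D$ of order $|D|$'' hypothesis is not transparently inherited by a $p$-quotient $G/L$, since a subgroup of $P/L$ of the expected order need not pull back to a subgroup of $P$ of order $|D|$; the reductions to $P$ elementary abelian and to $P$ minimal normal therefore hinge on choosing the representative $D$ to contain $L$, respectively $\Phi(P)$, and on a separate treatment of the small cases $|D|\le|L|$ and $|D|=|\Phi(P)|$, which moreover interact with the $Q_8$-clause when $p=2$. Everything else is the standard reduction machinery together with Theorems~\ref{theorem_first_paper}--\ref{theorem_Q8_free_groups} and the elementary module computation above.
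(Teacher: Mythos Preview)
Your reduction to the case $E = P$ a normal $p$-group is correct and essentially parallels the paper (which factors through $O_{p_1'}(E)$ rather than $O_{p_t}(E)$, but either works), and your module computation is sound: $[\overline X, G/K] = \sum_g (g-1)\overline X$ is indeed a $G/K$-submodule of the chief factor $\overline H$, so irreducibility forces the one chief factor $H/K$ with $|K| < |D| \le |H|$ to be cyclic. But this controls only that \emph{single} chief factor, and your final step---deducing that \emph{every} $G$-chief factor in $P$ is cyclic---has a genuine gap that you flag but do not close. The $D$-hypothesis does not transfer to $G/N$ for a normal $p$-subgroup $N$ with $|N| \ge |D|$, so neither the reduction to $\Phi(P) = 1$ nor the peel-off of a minimal normal $L$ survives the ``small'' cases $|\Phi(P)| \ge |D|$ and $|L| \ge |D|$. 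Concretely, when $|D| = p$ and $P$ has a unique minimal normal subgroup $L$ (necessarily of order $p$ by your claim), you obtain $L \le Z_{\mathfrak U}(G)$ but no hypothesis whatsoever on $P/L$ inside $G/L$; neither subdirect products nor $Z_{\mathfrak U}$-absorption rescue this.

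The paper sidesteps all of this with a short centralizer argument (its Lemma~\ref{lemma_hypercentre}): once $P \trianglelefteq G$ satisfies the $D$-hypothesis, for each prime $q \ne p$ and $Q \in \mathrm{Syl}_q(G)$ the group $H := PQ$ has $P$ as a genuine \emph{Sylow} $p$-subgroup, so Theorems~\ref{theorem_first_paper} and~\ref{theorem_Q8_free_groups} applied to $H$ force $H$ to be $p$-nilpotent, whence $Q \le C_G(P)$. Thus $G/C_G(P)$ is a $p$-group and $P \le Z_\infty(G) \le Z_{\mathfrak U}(G)$. The key idea you are missing is to exploit Theorems~\ref{theorem_first_paper}--\ref{theorem_Q8_free_groups} not on $G$ or $E$ directly but on the auxiliary groups $PQ$; this makes any chief-factor analysis and any passage to $p$-quotients unnecessary.
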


Theorem \ref{GaoLi_theorem_3_5} is covered by Theorem \ref{theorem_formations_1}. Also, the proof of Theorem \ref{theorem_formations_1} given here is shorter than the proof of Theorem \ref{GaoLi_theorem_3_5} given in \cite{GaoLi2021}. 

Our third main result shows that Theorem \ref{theorem_formations_1} remains true when we replace the assumption that the subgroups $P_1, \dots, P_t$ are Sylow subgroups of $E$ by the assumption that they are Sylow subgroups of the generalized Fitting subgroup $F^{*}(E)$ of $E$. 

\begin{theorem}
	\label{theorem_formations_2} 
	Let $\mathfrak{F}$ be a solvably saturated formation containing $\mathfrak{U}$, let $G$ be a group, and let $E$ be a non-trivial normal subgroup of $G$ such that $G/E \in \mathfrak{F}$. Let $t := |\pi(F^{*}(E))|$, and let $p_1 < \dots < p_t$ be the distinct prime divisors of $|F^{*}(E)|$. For each $1 \le i \le t$, let $P_i$ be a Sylow $p_i$-subgroup of $F^{*}(E)$. Suppose that, for each $1 \le i \le t$, either $P_i$ is cyclic or there is a subgroup $D_i$ of $P_i$ with $1 < |D_i| \le |P_i|$ such that any subgroup of $P_i$ with order $|D_i|$ is an $IC\Phi$-subgroup of $G$. If $p_1 = 2$, $|D_1| = 2$ and $P_1$ is not $Q_8$-free, assume moreover that any cyclic subgroup of $P_1$ with order $4$ is an $IC\Phi$-subgroup of $G$. Then $G \in \mathfrak{F}$.   
\end{theorem}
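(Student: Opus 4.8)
The plan is to reduce Theorem~\ref{theorem_formations_2} to Theorem~\ref{theorem_formations_1}, the heart of the matter being to prove that the hypotheses force $F^{*}(E) = F(E)$. I begin with the elementary transfer observation that if $H \le K \le G$ and $H$ is an $IC\Phi$-subgroup of $G$, then $[H,K] \le [H,G]$, whence $H \cap [H,K] \le H \cap [H,G] \le \Phi(H)$ and $H$ is an $IC\Phi$-subgroup of $K$. Consequently every hypothesis imposed on the subgroups $P_i$ remains valid when $G$ is replaced by any subgroup containing them, in particular by $F^{*}(E)$ itself, which is characteristic in $E$ and hence normal in $G$.

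The core step is to show that $F^{*}(E)$ is solvable, i.e. that $F^{*}(E) = F(E)$. If $|F^{*}(E)|$ is odd this is immediate from the Feit--Thompson theorem, so suppose $2$ divides $|F^{*}(E)|$; then $p_1 = 2$ and $P_1$ is a Sylow $2$-subgroup of $F^{*}(E)$. I claim $F^{*}(E)$ is then $2$-nilpotent, which I would verify by running through the cases of the hypothesis for $i = 1$: if $P_1$ is cyclic, invoke the classical fact that a group with a cyclic Sylow $2$-subgroup is $2$-nilpotent; if $|D_1| > 2$, apply Theorem~\ref{theorem_first_paper} to $F^{*}(E)$; if $|D_1| = 2$ and $P_1$ is not $Q_8$-free, note that then $|P_1| \ge 8$ and the extra hypothesis supplies precisely the condition on cyclic subgroups of order $4$ needed to apply Theorem~\ref{theorem_first_paper} to $F^{*}(E)$; and if $|D_1| = 2$ and $P_1$ is $Q_8$-free, observe that $F^{*}(E)$ is then $Q_8$-free as well (a $Q_8$-section of $F^{*}(E)$ would, through a Sylow $2$-subgroup, give a $Q_8$-section of $P_1$), so Theorem~\ref{theorem_Q8_free_groups} applies to $F^{*}(E)$. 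In every case $F^{*}(E)$ has a normal $2$-complement, which has odd order and so is solvable by Feit--Thompson; hence $F^{*}(E)$ is solvable. Since the layer of $F^{*}(E)$ is a product of quasisimple groups, each of even order, and a nilpotent group has no non-trivial perfect subgroup, solvability of $F^{*}(E)$ forces its layer to be trivial, i.e. $F^{*}(E) = F(E)$.

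It remains to derive $G \in \mathfrak{F}$ from $F^{*}(E) = F(E)$. Now $F(E) = F^{*}(E)$ is a nilpotent normal subgroup of $G$, so $F(E) = P_1 \times \dots \times P_t$ with each $P_i$ characteristic in $F(E)$, hence normal in $G$. For each $i$ I would show $P_i \le Z_{\mathfrak{U}}(G)$: if $P_i$ is cyclic, its subgroups are linearly ordered, so every $G$-chief factor inside $P_i$ is cyclic; if $P_i$ satisfies the $IC\Phi$-hypothesis, then the argument used to prove Theorem~\ref{theorem_first_paper}, applied to the normal $p_i$-subgroup $P_i$ of $G$, yields the sharper conclusion that every $G$-chief factor inside $P_i$ has order $p_i$. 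Hence $F^{*}(E) = F(E) \le Z_{\mathfrak{U}}(G) \le Z_{\mathfrak{F}}(G)$, using $\mathfrak{U} \subseteq \mathfrak{F}$. The standard fact that $F^{*}(E) \le Z_{\mathfrak{F}}(G)$ implies $E \le Z_{\mathfrak{F}}(G)$ — which rests on $C_E(F^{*}(E)) \le F^{*}(E)$, together (for the solvably saturated setting) with the solvability of $E$, itself a consequence of $F^{*}(E) \le Z_{\mathfrak{U}}(G)$ since $E/C_E(F(E))$ then embeds into a solvable automorphism group of $F(E)$ — combined with $G/E \in \mathfrak{F}$ and the hypothesis that $\mathfrak{F}$ is a solvably saturated formation containing $\mathfrak{U}$, gives $G \in \mathfrak{F}$. (In the special case $F(E) = E$ one may instead quote Theorem~\ref{theorem_formations_1} directly.)

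The main obstacle is the core step: one must check that the $Q_8$/order-$2$ caveat of Theorem~\ref{theorem_formations_2} dovetails with the $|P| \ge 8$ caveat of Theorem~\ref{theorem_first_paper} — which is exactly why Theorem~\ref{theorem_Q8_free_groups} is needed as a separate ingredient rather than Theorem~\ref{theorem_first_paper} alone — and one must know that a non-trivial layer can occur only when $p_1 = 2$, which is where the Feit--Thompson theorem enters, via the fact that every quasisimple group has even order. The concluding passage from $F^{*}(E) = F(E)$ to $G \in \mathfrak{F}$ is comparatively routine, but it does rely on the hypercentral sharpening of Theorem~\ref{theorem_first_paper} for normal $p$-subgroups and on the $\mathfrak{F}$-hypercentre machinery for solvably saturated formations, so those points need to be handled with some care.
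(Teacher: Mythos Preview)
Your proposal is correct and follows the same overall architecture as the paper: first show $F^{*}(E)$ is solvable so that $F^{*}(E)=F(E)$, then show each $P_i$ is normal in $G$ and lies in $Z_{\mathfrak U}(G)$ (cyclic case trivially, non-cyclic case via the hypercentral strengthening of Theorem~\ref{theorem_first_paper}, which is exactly the paper's Lemma~\ref{lemma_hypercentre}), and finally conclude via Lemmas~\ref{lemma_formations_generalized_fitting} and~\ref{lemma_formations_containing_U}.

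The one substantive difference is in how solvability of $F^{*}(E)$ is obtained. The paper iterates the $p$-nilpotence argument over \emph{all} primes $p_1<\dots<p_t$ to build a Sylow tower of supersolvable type for $F^{*}(E)$, thereby proving solvability directly from the hypotheses. You instead run the argument only at the prime~$2$ (to get a normal $2$-complement of $F^{*}(E)$) and then invoke the Feit--Thompson theorem for the odd-order complement, and again for the case $2\nmid|F^{*}(E)|$. Both routes are valid; the paper's is more elementary and self-contained, while yours is shorter but leans on a very deep external result that the paper manages to avoid entirely. Apart from this, your sketch matches the paper's proof closely, including the case split on the $Q_8$-free condition that governs whether Theorem~\ref{theorem_first_paper} or Theorem~\ref{theorem_Q8_free_groups} is invoked.
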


All the above theorems are concerned with groups $G$ such that \textit{some} primary subgroups of $G$ are $IC\Phi$-subgroups of $G$. It is natural to ask what we can say about the structure of a group $G$ when \textit{every} primary subgroup of $G$ is an $IC\Phi$-subgroup of $G$. Clearly, any abelian group has this property. Also, one can check that any subgroup of $Q_8$ is an $IC\Phi$-subgroup of $Q_8$. Our fourth main result characterizes the abelian groups as the $Q_8$-free groups all of whose primary subgroups are $IC\Phi$-subgroups. 
		
\begin{theorem}
	\label{characterization_abelian_groups}
	Let $G$ be a group. Then the following are equivalent:
	\begin{enumerate}
		\item[(1)] $G$ is abelian. 
		\item[(2)] $G$ is $Q_8$-free, and any subgroup of $G$ is an $IC\Phi$-subgroup of $G$. 
		\item[(3)] $G$ is $Q_8$-free, and any primary subgroup of $G$ is an $IC\Phi$-subgroup of $G$.
	\end{enumerate}	
\end{theorem}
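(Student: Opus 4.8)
The implications $(1)\Rightarrow(2)$ and $(2)\Rightarrow(3)$ are easy: if $G$ is abelian then $[H,G]=1$ for every subgroup $H\le G$, so $H\cap[H,G]=1\le\Phi(H)$, and every section of $G$ is abelian, so $G$ is $Q_8$-free; and a primary subgroup is in particular a subgroup. So the content lies entirely in $(3)\Rightarrow(1)$, which I would prove in three steps: first that $G$ is nilpotent, then a reduction to $p$-groups, and finally the $p$-group case via the classification of minimal non-abelian $p$-groups.

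For nilpotency, I would show that $G$ is $p$-nilpotent for every prime $p$ dividing $|G|$. Fix such a $p$ and a Sylow $p$-subgroup $P$ of $G$. If $p$ is odd, let $D$ be a subgroup of $P$ of order $p$; every subgroup of $P$ of order $p=|D|$ is primary, hence an $IC\Phi$-subgroup of $G$, and since $|D|\neq2$ the extra hypothesis of Theorem \ref{theorem_first_paper} is vacuous, so that theorem gives $p$-nilpotency of $G$. For $p=2$, the group $G$ is $Q_8$-free and every subgroup of order $2$ is primary, hence an $IC\Phi$-subgroup of $G$, so Theorem \ref{theorem_Q8_free_groups} gives $2$-nilpotency of $G$. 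As $G$ is $p$-nilpotent for every prime $p\mid|G|$, intersecting, for a fixed prime $q$, the normal $p$-complements of $G$ over all primes $p\neq q$ yields a normal Sylow $q$-subgroup of $G$; hence $G$ is the direct product of its Sylow subgroups and is nilpotent.

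Next, write $G=P_1\times\dots\times P_k$ with the $P_i$ the Sylow subgroups of $G$. Each $P_i$ is a section of $G$, hence $Q_8$-free, and every subgroup $H$ of $P_i$ is primary, hence an $IC\Phi$-subgroup of $G$, so $H\cap[H,P_i]\le H\cap[H,G]\le\Phi(H)$; thus every subgroup of $P_i$ is an $IC\Phi$-subgroup of $P_i$. It therefore suffices to prove that a $Q_8$-free $p$-group $P$ all of whose subgroups are $IC\Phi$-subgroups of $P$ is abelian. Suppose not, and let $P$ be such a group of minimal order. Every proper subgroup $Q<P$ is $Q_8$-free (being a section of $P$), and the same argument (with $P$ and $Q$ in place of $G$ and $P_i$) shows that every subgroup of $Q$ is an $IC\Phi$-subgroup of $Q$; so $Q$ is abelian by minimality. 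Hence $P$ is minimal non-abelian, and $P\not\cong Q_8$ since $P$ is $Q_8$-free.

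By Rédei's classification of the minimal non-abelian $p$-groups, $P$ is isomorphic to $\langle a,b\mid a^{p^m}=b^{p^n}=1,\ a^b=a^{1+p^{m-1}}\rangle$ with $m\ge2$, or to $\langle a,b,c\mid a^{p^m}=b^{p^n}=c^p=1,\ [a,b]=c,\ [a,c]=[b,c]=1\rangle$. In either case---setting $c:=[a,b]=a^{p^{m-1}}$ in the first---the subgroup $P'=\langle c\rangle$ has order $p$ and is central, the generator $b$ is non-central, and $\langle b\rangle\cap\langle c\rangle=1$: in the first case because $\langle c\rangle\le\langle a\rangle$ and $P=\langle a\rangle\rtimes\langle b\rangle$, in the second because $b$ has order $p^n$ and maps injectively into $P/\langle c\rangle\cong\mathbb{Z}_{p^m}\times\mathbb{Z}_{p^n}$. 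I would then take $H:=\langle b\rangle\times\langle c\rangle$. Then $H$ is abelian, so $\Phi(H)=H^p=\langle b^p\rangle$, and $c\notin\langle b^p\rangle$ because $c\notin\langle b\rangle$; on the other hand $[H,P]\le[P,P]=\langle c\rangle$ and $[H,P]\neq1$ because $b$ is non-central, so $[H,P]=\langle c\rangle$ and $H\cap[H,P]=\langle c\rangle\not\le\Phi(H)$. Thus $H$ is not an $IC\Phi$-subgroup of $P$, a contradiction, and so $P$ is abelian. The first two steps are routine --- the hypothesis ``$Q_8$-free'' does essentially nothing beyond excluding $Q_8$ itself, which is the unique non-abelian $p$-group all of whose subgroups are $IC\Phi$-subgroups of it --- so the real work is the last step, and there the only point needing care is the verification, for each of Rédei's two infinite families, that a generator can be chosen non-central with cyclic subgroup meeting $P'$ trivially, after which the construction of $H$ is immediate.
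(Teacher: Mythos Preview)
Your proof is correct, and the overall architecture---$(1)\Rightarrow(2)\Rightarrow(3)$ trivial, then nilpotency, reduction to Sylow subgroups, and a $p$-group argument---matches the paper's. The substantive difference is in the last step.

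For the $p$-group case the paper argues more intrinsically, without invoking R\'edei. It shows (via Lemma~\ref{2.1}(2) applied to quotients) that $P/N$ is abelian for every minimal normal subgroup $N$, so $P'$ is the \emph{unique} minimal normal subgroup of $P$ and $|P'|=p$. Then, if some $Q\le P$ of order $p$ differs from $P'$, it takes $H:=P'Q$ (elementary abelian of order $p^2$, hence $\Phi(H)=1$) and observes that $Q\not\le Z(P)$ forces $[H,P]=P'$, whence $P'=H\cap[H,P]\le\Phi(H)=1$, a contradiction. Thus $P$ has a unique subgroup of order $p$, and Lemma~\ref{p_group_unique_subgroup_order_p} (cyclic or generalized quaternion) finishes. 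Your approach instead reduces to a minimal non-abelian $P$, imports R\'edei's classification, and builds the obstruction $H=\langle b\rangle\times P'$ from an explicit presentation. The two constructions are morally the same (an abelian $H\supseteq P'$ with $P'\not\le\Phi(H)$), but the paper avoids the external classification at the cost of the slightly more delicate ``unique subgroup of order $p$'' step, while your route is more concrete but leans on a heavier cited result.

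One minor economy you might adopt from the paper in the nilpotency step: since each Sylow subgroup $P_i$ is itself primary and hence an $IC\Phi$-subgroup of $G$, Theorem~\ref{theorem_first_paper} with $D=P_i$ gives $p_i$-nilpotency uniformly for all primes, so the separate appeal to Theorem~\ref{theorem_Q8_free_groups} at $p=2$ is unnecessary.
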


\subsection*{Groups all of whose maximal, $2$-maximal or $3$-maximal subgroups are $IC\Phi$-subgroups}
Let $n$ be a positive integer, and let $G$ be a group. A subgroup $H$ of $G$ is said to be \textit{$n$-maximal} in $G$ if there is a chain of subgroups $H = H_0 < H_1 < \dots < H_n = G$, where $H_i$ is maximal in $H_{i+1}$ for all $0 \le i \le n-1$. 

There are many results in finite group theory that describe the structure of a group $G$ under the assumption that, for a given positive integer $n$, all $n$-maximal subgroups of $G$ satisfy a given property.

Perhaps the most well-known result of this kind is due to Wielandt, who proved that a group $G$ is nilpotent if every maximal subgroup of $G$ is normal in $G$ (see \cite[Kapitel III, Hauptsatz 2.3]{Huppert}). Huppert proved that a group $G$ is supersolvable if every $2$-maximal subgroup of $G$ is normal in $G$ (see \cite[Satz 23]{Huppert1957}) or if $|G|$ is divisible by at least three primes and every $3$-maximal subgroup of $G$ is normal in $G$ (see \cite[Satz 24]{Huppert1957}). Janko proved that a solvable group $G$ is supersolvable if every $4$-maximal subgroup of $G$ is normal in $G$ and $|G|$ is divisible by at least four primes (see \cite[Theorem 3]{Janko}). Huppert's and Janko's results were strengthened by Asaad \cite{Asaad1989}. 

Mann \cite{Mann} proved a number of structural results about groups whose $n$-maximal subgroups, for some positive integer $n$, are subnormal. In the last decade, a number of results have been obtained on groups whose $n$-maximal subgroups, for some positive integer $n$, satisfy certain properties generalizing subnormality, see for example \cite{KovalevaSkiba, MonakhovKniahina, Monakhov, MonakhovKonovalova} (some of these results only deal with the case $n = 2$).

Other recent results on $n$-maximal subgroups and their influence on the structure of groups were obtained for example in \cite{Gaoetal2020, Gaoetal2021, Qian}.

As a development of the research on $n$-maximal subgroups, we will prove the following three theorems.
	
	\begin{theorem}
		\label{theorem_maximal_subgroups}
		Let $G$ be a group such that any maximal subgroup of $G$ is an $IC\Phi$-subgroup of $G$. Then $G$ is nilpotent.
	\end{theorem}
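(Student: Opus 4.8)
The plan is to show that every Sylow subgroup of $G$ is normal, which forces $G$ to be nilpotent. Let $p$ be the largest prime dividing $|G|$, let $P \in \mathrm{Syl}_p(G)$, and suppose for contradiction that $P$ is not normal in $G$. Then $N_G(P) < G$, so $N_G(P)$ lies in some maximal subgroup $M$ of $G$; by hypothesis $M \cap [M,G] \le \Phi(M)$. The key observation is that $M$ contains a Sylow $p$-subgroup of $G$ (namely $P$, since $P \le N_G(P) \le M$). I would first argue that $M$ is normal in $G$: indeed, since $M$ is an $IC\Phi$-subgroup, $[M,G] \le M$ would give $[M,G] \le \Phi(M)$; more usefully, one shows $G = M[M,G]$ is impossible unless $[M,G]\le M$, because $[M,G] \trianglelefteq G$ and one can play $M$ against its core. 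Actually the cleanest route: since $M$ is maximal, either $M \trianglelefteq G$ or $M_G := \mathrm{core}_G(M)$ has the property that $G/M_G$ is a primitive group with $M/M_G$ self-normalizing. I would handle these two cases separately.

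If $M \trianglelefteq G$, then $[M,G] \le M$, so the $IC\Phi$-condition gives $[M,G] \le \Phi(M)$, hence $M/\Phi(M)$ is centralized by $G$, so $G/C_G(M/\Phi(M))$ is trivial on a chief-factor level; combined with $G/M$ being a $p'$-group is not yet clear, so instead I would induct: $G/M$ satisfies the same hypothesis (quotients of $IC\Phi$-subgroups behave well — any maximal subgroup of $G/M$ pulls back to a maximal subgroup of $G$ containing $M$, and the $IC\Phi$-property is inherited by quotients, a fact presumably available from \cite{GaoLi2021, Kaspczyk2021}), so by induction $G/M$ is nilpotent; and $M \le \Phi(G) \cdot (\text{something})$... — more precisely, $[M,G]\le\Phi(M)\le\Phi(G)$ need not hold, but $[M,G] \le \Phi(M)$ means $M$ is nilpotent and $G$ acts on $M$ stabilizing the chief series through $\Phi(M)$, so $M \le F(G)$ wait that needs $M$ to centralize... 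Let me restructure: from $[M,G]\le\Phi(M)$ and the fact that a group acting on a nilpotent group trivially modulo its Frattini subgroup acts nilpotently, we get $M \le Z_\infty(G)$ relative to... cleanest is: $[M,G]\le \Phi(M)$ forces $[M, G, G, \dots] $ to descend, hence $M$ centralizes each factor of a $G$-chief series refining $1 \le \Phi(M) \le M$, so $M \cdot O^{?}$... I will instead just conclude $M$ is subnormal with nilpotent quotients and push the induction through, using that $G/M$ nilpotent plus $M$ "central-like" yields $G$ nilpotent.

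If $M$ is not normal, I would derive a contradiction with $p$ being the largest prime: $M \supseteq N_G(P)$ with $P \in \mathrm{Syl}_p(M)$, so by the Frattini argument $G = M N_G(P) = M$, contradiction — wait, Frattini needs $P$ Sylow in a \emph{normal} subgroup. So that fails directly. Instead: consider $G/M_G$, a primitive group; its socle is a minimal normal subgroup $K/M_G$, and $G = M K$. Then $[M,G] \not\le M$, so $M \cap [M,G] \le \Phi(M)$ still gives information: $[M,G] \trianglelefteq G$ and $[M,G] \not\le M_G$ would make $[M,G]$ contain $K/M_G$-part; computing $M \cap [M,G]$ in terms of $M \cap K$ and using maximality, I expect to show $M \cap K \le \Phi(M)$, which combined with $M\cap K \trianglelefteq M$ (as $K \trianglelefteq G$) and $M/(M\cap K) \cong G/K$ gives a contradiction by a counting/transfer argument at the prime $p$ — specifically, a Sylow $p$-subgroup of $G$ meets $K$ nontrivially or lies in $M$, and the largest-prime hypothesis via Burnside's normal $p$-complement theorem (applied after showing $N_G(P)$ controls fusion, which follows from $N_G(P)\le M$ and the $IC\Phi$-structure) forces $P \trianglelefteq G$.

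The main obstacle I anticipate is the non-normal maximal subgroup case: extracting from $M \cap [M,G] \le \Phi(M)$ enough to contradict non-normality of the top Sylow subgroup. The likely resolution is to first prove the lemma that \emph{an $IC\Phi$-maximal subgroup is automatically normal} — i.e., if $M$ is maximal in $G$ and $M \cap [M,G]\le\Phi(M)$ then $M \trianglelefteq G$ — by observing that if $M$ is not normal then $[M,G]$ together with $M$ generates $G$, so $G = M[M,G]$, whence $G/[M,G]^{G} $... and in a minimal counterexample $[M,G]$ can be analyzed as above to force $M \cap [M,G] = M$ or a controlled subgroup, contradicting $\le \Phi(M) < M$. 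Once every maximal subgroup is normal, Wielandt's theorem (\cite[Kapitel III, Hauptsatz 2.3]{Huppert}, quoted in the introduction) immediately gives that $G$ is nilpotent, completing the proof. I expect the write-up to reduce entirely to establishing that single normality lemma.
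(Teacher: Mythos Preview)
Your proposal ultimately rests on the lemma that an $IC\Phi$-maximal subgroup is automatically normal, but this is false: in $G = S_3$, any subgroup $M$ of order $2$ has $[M,G] = A_3$ and hence $M \cap [M,G] = 1 = \Phi(M)$, so $M$ is an $IC\Phi$-subgroup of $G$ that is not normal. Thus the reduction to Wielandt's theorem cannot be carried out as you outline, and the earlier partial arguments (the Sylow-for-the-largest-prime setup, the core/primitive analysis) never close either --- you sensed this yourself, but the fallback lemma you hoped would rescue the argument simply does not hold.

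The paper's proof sidesteps all of this by choosing the maximal subgroup for a different reason. In a minimal counterexample $G$ one first notes that $G$ is not simple (any proper non-trivial $IC\Phi$-subgroup forces this, since either $[H,G]$ is a proper non-trivial normal subgroup or $H \le Z(G)$), and that the hypothesis passes to $G/N$ for any $1 \ne N \trianglelefteq G$, so $G/N$ is nilpotent by minimality. In particular $G' < G$, and one now picks a maximal subgroup $M$ with $G' \le M$. This $M$ is normal for free, and the $IC\Phi$-condition gives $[G',G] \le M \cap [M,G] \le \Phi(M) \le \Phi(G)$ (the last inclusion because $M \trianglelefteq G$). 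Hence $G/\Phi(G)$ has nilpotency class at most $2$, so $G$ is nilpotent --- a contradiction. The decisive difference from your approach is that $M$ is chosen to contain $G'$ rather than $N_G(P)$; this simultaneously guarantees normality of $M$ and gives direct leverage on the lower central series, whereas containing $N_G(P)$ gives neither.
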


\begin{theorem}
	\label{theorem_second_maximal_subgroups}
	Let $G$ be a group. Suppose that $G$ has a non-trivial $2$-maximal subgroup and that any $2$-maximal subgroup of $G$ is an $IC\Phi$-subgroup of $G$. Then $G$ is nilpotent. 
\end{theorem}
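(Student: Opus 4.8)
The plan is to argue by induction on $|G|$, reducing to a minimal counterexample and then forcing a contradiction. First I would record the basic hereditary behaviour of the hypothesis: if $H$ is a $2$-maximal subgroup of $G$ lying inside a proper subgroup $K$ with $H$ still $2$-maximal in $K$, then $H$ is an $IC\Phi$-subgroup of $K$ as well (since $H\cap[H,K]\le H\cap[H,G]\le\Phi(H)$), and similarly the property passes to quotients $G/N$ when $N$ is contained in the relevant subgroups. Using this, a minimal counterexample $G$ has every proper subgroup that is "large enough" nilpotent, and every proper quotient nilpotent, so $G$ has a unique minimal normal subgroup $N$, $\Phi(G)=1$, and $G=N\rtimes M$ for a maximal subgroup $M$; moreover $N=F(G)=C_G(N)$ is an elementary abelian $p$-group for some prime $p$, and $M\cong G/N$ is nilpotent. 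The goal is to show $M=1$.

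The key step is to exploit $2$-maximal subgroups that sit inside the complement $M$ together with $IC\Phi$-information to kill the action of $M$ on $N$. Since $G$ has a non-trivial $2$-maximal subgroup, $|G|$ has at least three prime factors counted with multiplicity, so $M\neq 1$ and $M$ itself has maximal subgroups. A maximal subgroup $M_1$ of $M$ is a $2$-maximal subgroup of $G$, hence an $IC\Phi$-subgroup of $G$; I would first handle the case where $M$ has a maximal subgroup $M_1$ with $[M_1,G]$ non-trivial. Because $N$ is the unique minimal normal subgroup and $N\le[N,M]\le[G,G]$, a careful choice of $M_1$ (or of a $2$-maximal subgroup of the form $M_1\cap$ something, using that $N$ is abelian so subgroups of $NM_1$ of the right index are $2$-maximal) should produce a subgroup $H$ with $N\cap H\neq 1$ contained in $H\cap[H,G]$ but not in $\Phi(H)$ — the latter because $\Phi(H)$ is nilpotent and the $p$-part interacts badly with the complement part — giving the contradiction. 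The cleanest route is probably: take a prime $q\neq p$ dividing $|M|$, a Sylow $q$-subgroup $Q$ of $M$, pass to $NQ$, and inside $NQ$ analyze $2$-maximal subgroups of $G$ of the form (subgroup of $N$)$\rtimes$(maximal subgroup of $Q$) or $N_1\rtimes Q$ with $N_1$ a hyperplane of $N$, to conclude $[N,Q]=1$ for all such $q$, whence $M\le C_G(N)=N$, forcing $M=1$.

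The main obstacle I anticipate is the bookkeeping around which subgroups of $G$ are genuinely $2$-maximal: $2$-maximality is not transitive through arbitrary chains, so I must verify each time that the auxiliary subgroup $H$ I feed into the $IC\Phi$-hypothesis really is $2$-maximal in $G$ (this is why the abelianness of $N=F(G)$ is crucial — it makes the subgroup lattice above $1$ inside $N$ rich enough that hyperplanes of $N$ give maximal subgroups of $N$, hence $2$-maximal subgroups of $NM_1$ and, when $NM_1$ is appropriately placed, of $G$). A secondary subtlety is the low-rank edge cases: if $N$ has order $p$, or if $M$ is cyclic of prime order, the supply of $2$-maximal subgroups is thin and one may need to invoke Theorem 1.7 (every maximal subgroup is an $IC\Phi$-subgroup $\Rightarrow$ nilpotent) applied to a suitable proper subgroup, or directly analyze the few remaining configurations such as $G$ of order $p^2q$ or $pqr$. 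I would end by noting that once $[N,M]=1$ is established, $M$ centralizes the self-centralizing $N$, so $M=1$ and $G=N$ is a $p$-group, contradicting the choice of $G$ as a non-nilpotent counterexample.
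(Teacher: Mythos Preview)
Your framework (minimal counterexample, reduce to $G=N\rtimes M$ with $N$ elementary abelian self-centralising and $M$ nilpotent) is a plausible starting point, but the decisive step --- forcing $[N,M]=1$ from the $IC\Phi$-condition on suitably chosen $2$-maximal subgroups --- is never actually carried out. You write that a ``careful choice'' of $M_1$, or a subgroup of the shape $N_1\rtimes Q$, ``should produce'' a contradiction, and then immediately flag the very obstacle (checking genuine $2$-maximality in $G$, not merely in $NQ$ or $NM_1$) without resolving it. Concretely, $NQ$ is maximal in $G$ only when $Q$ has prime index in $M$, so $N_1Q$ need not be $2$-maximal in $G$; and for $M_1$ maximal in $M$ one has $M_1\cap N=1$, so even if $N\le[M_1,G]$ the inclusion $M_1\cap[M_1,G]\le\Phi(M_1)$ yields no information about $N$. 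The reduction $\Phi(G)=1$ is also not free: it needs $G/\Phi(G)$ to inherit a non-trivial $2$-maximal subgroup, which fails precisely when $|G/\Phi(G)|=pq$, and that Frobenius-quotient case must then be treated by hand. As written, the proposal is an outline with its own acknowledged gaps rather than a proof.

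The paper takes a different and much shorter route that sidesteps the lattice bookkeeping entirely. First, applying Theorem~\ref{theorem_maximal_subgroups} to each maximal subgroup $M$ (whose maximal subgroups are $2$-maximal in $G$, hence $IC\Phi$ in $M$ by Lemma~\ref{2.1}(1)) shows that every maximal subgroup of $G$ is nilpotent, so $G$ is a Schmidt group: $|G|=p^aq^b$ with $P\trianglelefteq G$ and $Q$ cyclic. The two tools that finish the argument are Lemma~\ref{ICPhi_nilpotent_2} (an $IC\Phi$-subgroup containing $G'$ forces nilpotence), which gives $b=1$ by looking at the $2$-maximal subgroup of $G$ sitting above $P$ when $b\ge 2$, and Theorem~\ref{theorem_first_paper}, which gives $p$-nilpotence because once $b=1$ every maximal subgroup of $P$ is $2$-maximal in $G$ and hence an $IC\Phi$-subgroup. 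You never invoke either of these, and they are what makes the argument go through without any action analysis.
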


\begin{theorem}
	\label{theorem_third_maximal_subgroups}
	Let $G$ be a group. Suppose that $G$ has a non-trivial $3$-maximal subgroup and that any $3$-maximal subgroup of $G$ is an $IC\Phi$-subgroup of $G$. Then either $G$ is nilpotent or $G \cong SL_2(3)$. 
\end{theorem}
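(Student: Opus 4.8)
The plan is to argue by induction on $|G|$ and reduce to the case where $G$ is not nilpotent, aiming to show $G \cong SL_2(3)$. The key tools will be Theorem~\ref{theorem_second_maximal_subgroups} (and indirectly Theorem~\ref{theorem_maximal_subgroups}) together with the standard behavior of $IC\Phi$-subgroups under passage to quotients. First I would record the elementary fact that if $H$ is an $IC\Phi$-subgroup of $G$ and $N \trianglelefteq G$ with $N \le H$, then $H/N$ is an $IC\Phi$-subgroup of $G/N$; more importantly, if $N \trianglelefteq G$ then an $n$-maximal subgroup of $G/N$ lifts to an $n$-maximal subgroup of $G$ containing $N$, so the hypothesis ``every $3$-maximal subgroup is an $IC\Phi$-subgroup'' is inherited by $G/N$ (provided $G/N$ still has a non-trivial $3$-maximal subgroup). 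The subtle point is the phrase ``$G$ has a non-trivial $3$-maximal subgroup'': this fails exactly when every $3$-maximal subgroup is trivial, i.e. $|G|$ is a product of at most three primes (counted with multiplicity) — those small cases must be handled by hand, and there a non-nilpotent group with $|G| = p^a q^b$, $a+b \le 3$, is easily classified (e.g. $S_3$, $A_4$, $D_{10}$, $\dots$, none of which is $SL_2(3)$, contradiction) once one checks the $IC\Phi$-condition on their (few) $2$-maximal or trivial $3$-maximal subgroups.

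Next, assuming $|G|$ is divisible by enough primes, I would look at a minimal normal subgroup $N$ of $G$ when $G$ is solvable, or more generally analyze $G$ via its chief series. If $M$ is a maximal subgroup of $G$, then every $2$-maximal subgroup of $M$ is a $3$-maximal subgroup of $G$, hence an $IC\Phi$-subgroup of $G$; the relevant point is that being an $IC\Phi$-subgroup of $G$ implies being an $IC\Phi$-subgroup of $M$ (since $[H,M] \le [H,G]$ and $\Phi(H)$ is intrinsic), so by Theorem~\ref{theorem_second_maximal_subgroups} every maximal subgroup $M$ of $G$ (that has a non-trivial $2$-maximal subgroup, i.e. $|M|$ is divisible by at least two primes) is nilpotent. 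A group all of whose maximal subgroups are nilpotent is either nilpotent or a minimal non-nilpotent group in the sense of Schmidt, whose structure is completely known: $G = P \rtimes Q$ with $P$ a normal Sylow $p$-subgroup, $Q$ cyclic, $P/\Phi(P)$ a chief factor, $\Phi(P) \le Z(G)$, etc. (one must take a little care with the maximal subgroups of prime-power order, but these force $|G|$ small and are dispatched with the base case). So the structural heart of the argument is: classify the Schmidt groups $G$ in which, additionally, every $3$-maximal subgroup is an $IC\Phi$-subgroup.

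For such a Schmidt group $G = P \rtimes Q$ I would push the $IC\Phi$-hypothesis down. Inside $P$ (which has order $p^n$ with $n$ bounded, since $\Phi(P)$ is central and $P/\Phi(P)$ is a chief factor on which $Q$ acts irreducibly), one applies the $IC\Phi$-condition to $3$-maximal subgroups of $G$ that are $2$-maximal or maximal in $P$. Combined with $[P,G] = P$ when $P$ is non-abelian (or $[P,G]$ large in general), the inclusion $H \cap [H,G] \le \Phi(H)$ becomes very restrictive: it forces most subgroups $H$ of $P$ to satisfy $H \le \Phi(H)$ unless $H \cap [H,G]$ is small, which pins down $|P|$ and the structure of $P$ to a handful of possibilities — essentially $P$ elementary abelian of small rank, or $P \cong Q_8$. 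One then does the same for $Q$: its subgroups are $3$-maximal (or higher) in $G$ once $|P|$ and $|Q|$ are in range, and the $IC\Phi$-condition together with $[Q,G]$ being non-trivial (as $Q$ acts nontrivially) restricts $|Q|$. The arithmetic that survives all these constraints should be precisely $p = 2$, $P \cong Q_8$, $|Q| = 3$, i.e. $G \cong Q_8 \rtimes C_3 \cong SL_2(3)$, and one verifies directly that in $SL_2(3)$ every $3$-maximal subgroup (these have order $1$ or are among the order-$2$ and order-$3$ subgroups appropriately placed) is indeed an $IC\Phi$-subgroup — for instance the center, of order $2$, and the Sylow $3$-subgroups, of order $3$ (check $H \cap [H,G] \le \Phi(H)$ case by case).

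The main obstacle I anticipate is the bookkeeping around which subgroups are genuinely $3$-maximal: in a Schmidt group the number of primes dividing $|G|$ is only two, so a $3$-maximal subgroup exists only when $|G| = p^a q^b$ with $a + b \ge 4$ (or $a+b=3$ giving trivial $3$-maximal subgroups, excluded by hypothesis unless we are in the base case), which already forces $a \ge 2$ or $b \ge 2$; keeping straight the degenerate configurations (e.g. $\Phi(P) = 1$ versus $\Phi(P) \ne 1$, $|Q| = q$ versus $|Q| = q^2$, the maximal subgroups of $G$ of prime order when $|G|$ is tiny) and making sure the $IC\Phi$-hypothesis is actually applied to bona fide $3$-maximal subgroups — not accidentally to $2$-maximal ones — is where the proof will need the most care. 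A clean way to organize this is: (i) dispose of $|\pi(G)| \ge 3$ by showing $G$ would then be nilpotent via Theorems~\ref{theorem_maximal_subgroups}–\ref{theorem_second_maximal_subgroups} applied to maximal subgroups; (ii) reduce to $|\pi(G)| = 2$ and $G$ a Schmidt group; (iii) bound $|P|$ and $|Q|$ using the $IC\Phi$-condition on $3$-maximal subgroups; (iv) identify $G \cong SL_2(3)$ and verify the converse.
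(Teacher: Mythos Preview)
Your broad outline---reduce to a Schmidt group and then classify---matches the paper's, but there is a genuine gap at the reduction step. You claim Theorem~\ref{theorem_second_maximal_subgroups} makes every maximal subgroup $M$ of $G$ nilpotent, identifying the residual case as ``maximal subgroups of prime-power order''. This is backwards: a maximal subgroup of prime-power order is a $p$-group and hence automatically nilpotent. The case where Theorem~\ref{theorem_second_maximal_subgroups} fails to apply is when $M$ has no non-trivial $2$-maximal subgroup, i.e.\ $|M|$ is a product of at most two primes (your parenthetical ``$|M|$ is divisible by at least two primes'' misstates the criterion); the dangerous possibility is $|M| = pq$ with $p \ne q$ and $M$ non-abelian. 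Your assertion that this ``forces $|G|$ small'' and is absorbed into the base case is unsupported---$|G:M|$ is a priori unbounded. The paper handles exactly this configuration nontrivially: it first proves a separate solvability lemma (Lemma~\ref{lemma_for_theorem_third_maximal_subgroups}), then in Step~(1) uses Sylow systems together with Theorem~\ref{theorem_first_paper} to show that a non-nilpotent maximal subgroup of order $pq$ forces $|G:M|$ to be a power of $p$, and in Step~(2) pushes this to $p = 2$, $P \cong Q_8$, finally reaching a contradiction via the unique involution of $Q_8$. None of this is a ``base case''.

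In the Schmidt-group endgame the paper also uses tools you do not invoke: Lemma~\ref{ICPhi_nilpotent_2} (if $G' \le H$ with $H$ an $IC\Phi$-subgroup then $G$ is nilpotent) to bound the order of the cyclic complement $Q$, and Theorems~\ref{theorem_first_paper} and~\ref{theorem_Q8_free_groups} applied to the $2$-maximal subgroups of $P$ (which are $3$-maximal in $G$ once $|Q| = q$) to force $P \cong Q_8$. Your plan to compute $H \cap [H,G]$ directly might be made to work, but the sketch underestimates the effort and, more seriously, never establishes the solvability of $G$ that the paper needs for the Sylow-system argument in Step~(1).
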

		
		\section{Preliminaries} 
		In this section, we collect some results needed for the proofs of our main results. 
		
		\begin{lemma}
			\label{2.1}
			(\cite[Lemma 2.1]{GaoLi2021}) Let $G$ be a group, $H$ be an $IC\Phi$-subgroup of $G$, and $N$ be a normal subgroup of $G$. Then the following hold:
			\begin{enumerate}
				\item[(1)] If $H \le K \le G$, then $H$ is an $IC\Phi$-subgroup of $K$. 
				\item[(2)] If $N \le H$, then $H/N$ is an $IC\Phi$-subgroup of $G/N$. 
				\item[(3)] If $H$ is a $p$-group for some prime divisor $p$ of $\vert G \vert$ and $N$ is a $p'$-group, then $HN/N$ is an $IC\Phi$-subgroup of $G/N$. 
			\end{enumerate} 
		\end{lemma}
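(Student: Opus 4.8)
The plan is to establish the three parts in turn, each by unwinding the defining inequality $H \cap [H,G] \le \Phi(H)$ and combining it with a small number of standard facts: Dedekind's modular law, the inclusion $\Phi(H)N/N \le \Phi(H/N)$ valid for every normal subgroup $N$ of $H$, and the fact that $[H,G]$ is normal in $G$ whenever $H \le G$. I will also use that for a group homomorphism $\phi$ one has $\phi([A,B]) = [\phi(A),\phi(B)]$; applied to the quotient map $G \to G/N$ this yields $[HN/N, G/N] = [H,G]N/N$, and in particular $[H/N, G/N] = [H,G]N/N$ when $N \le H$.

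Part (1) is pure monotonicity of the commutator: since $K \le G$ one has $[H,K] \le [H,G]$, so $H \cap [H,K] \le H \cap [H,G] \le \Phi(H)$, that is, $H$ is an $IC\Phi$-subgroup of $K$. There is no obstacle here.

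For part (2), since $N \le H$ Dedekind's modular law gives $H \cap [H,G]N = (H\cap[H,G])N$, and therefore
\[
(H/N) \cap [H/N, G/N] = \bigl((H\cap[H,G])N\bigr)/N \le (\Phi(H)N)/N \le \Phi(H/N),
\]
using $H \cap [H,G] \le \Phi(H)$ and then the standard Frattini inclusion. Hence $H/N$ is an $IC\Phi$-subgroup of $G/N$.

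For part (3), note first that $H \cap N = 1$, since $H$ is a $p$-group and $N$ a $p'$-group, so the quotient map restricts to an isomorphism $H \to HN/N$; in particular $\Phi(HN/N) = \Phi(H)N/N$, and it suffices to prove $(HN/N) \cap [HN/N, G/N] \le \Phi(H)N/N$. As $[HN/N, G/N] = [H,G]N/N$, Dedekind's law gives $HN \cap [H,G]N = (HN\cap[H,G])N$, hence $(HN/N)\cap[HN/N,G/N] = MN/N$, where $M := HN \cap [H,G]$ is a normal subgroup of $HN$ (because $[H,G] \trianglelefteq G$). The decisive step is to determine $M$: since $HN/N$ is a $p$-group, $N$ is a normal $p'$-subgroup of $HN$ and $H \in \mathrm{Syl}_p(HN)$, so $M \cap H$ is a Sylow $p$-subgroup of $M$ while $M \cap N$ is a normal $p'$-subgroup of $M$ with $p$-group quotient; comparing orders gives $M = (M\cap N)(M\cap H)$, whence $MN = (M\cap H)N$. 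Finally $M \cap H = HN \cap [H,G] \cap H = H \cap [H,G] \le \Phi(H)$, so $MN/N \le \Phi(H)N/N = \Phi(HN/N)$, as required. The one point in the whole argument that is not purely formal is the coprimality decomposition $M = (M\cap N)(M\cap H)$, so that is where I would take the most care.
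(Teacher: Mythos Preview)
Your proof is correct in all three parts. The paper does not supply its own proof of this lemma; it merely cites the original source \cite[Lemma 2.1]{GaoLi2021}. Your argument therefore serves as an independent verification, and the one genuinely substantive step---the coprime decomposition $M = (M\cap N)(M\cap H)$ in part~(3), obtained by observing that $M\cap H \in \mathrm{Syl}_p(M)$ (because $H \in \mathrm{Syl}_p(HN)$ and $M \trianglelefteq HN$) while $M\cap N$ is the normal $p$-complement of $M$---is handled cleanly.
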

	
	\begin{lemma}
		\label{ICPhi_not_simple} 
		Let $G$ be a group possessing a proper non-trivial $IC\Phi$-subgroup $H$. Then $G$ is not simple. 
	\end{lemma}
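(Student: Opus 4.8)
The plan is to argue by contradiction: assume that $G$ is simple. Since $G$ has a proper non-trivial subgroup (namely $H$), $G$ cannot have prime order, so $G$ is non-abelian simple. The key object to introduce is the commutator subgroup $[H,G]$, and the crucial observation is that it is \emph{normal} in $G$. This is a general fact: for any two subgroups $A,B$ of a group, $[A,B]$ is normalized by both $A$ and $B$ — this follows from the commutator identities $[a,xy] = [a,y]\,[a,x]^y$ and $[xy,b] = [x,b]^y\,[y,b]$, which show that conjugating a generator $[a,b]$ of $[A,B]$ by an element of $A$ or of $B$ yields an element of $[A,B]$ — and hence $[A,B] \trianglelefteq \langle A,B\rangle$. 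Taking $A = H$ and $B = G$ gives $[H,G] \trianglelefteq G$.

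Now invoke simplicity of $G$: since $[H,G]$ is a normal subgroup of $G$, either $[H,G] = 1$ or $[H,G] = G$. In the first case $H$ centralizes $G$, so $H \le Z(G)$; but $Z(G)$ is a normal subgroup of $G$ and $1 < H \le Z(G) < G$ contradicts the simplicity of $G$ (equivalently, $Z(G) = 1$ because $G$ is non-abelian simple, so $H = 1$, a contradiction). In the second case $H \cap [H,G] = H \cap G = H$, so the $IC\Phi$-condition $H \cap [H,G] \le \Phi(H)$ forces $H \le \Phi(H)$, i.e. $\Phi(H) = H$. But a non-trivial finite group always has a maximal subgroup, so $\Phi(H) < H$, again a contradiction. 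Thus both cases are impossible, and $G$ is not simple.

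The only real point requiring care is the normality of $[H,G]$ in $G$; everything else is immediate. The two boundary situations — $G$ of prime order, and $H$ central — are handled directly by the remark that $H$ is assumed proper and non-trivial, so no separate case analysis is needed beyond what is indicated above.
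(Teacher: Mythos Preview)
Your proof is correct and follows essentially the same approach as the paper: both hinge on the normality of $[H,G]$ in $G$, then split into the cases $[H,G]=G$ (forcing $H\le\Phi(H)$, impossible) and $[H,G]=1$ (forcing $H\le Z(G)$). The only cosmetic difference is that the paper argues directly rather than by contradiction, and does not bother to justify the normality of $[H,G]$.
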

	
	\begin{proof}
		Since $H$ is an $IC\Phi$-subgroup of $G$, we have $H \cap [H,G] \le \Phi(H)$. If $G = [H,G]$, then it follows that $H \le \Phi(H)$, which is impossible. Therefore, $[H,G]$ is a proper subgroup of $G$. Also, $[H,G]$ is normal in $G$. If $[H,G] \ne 1$, it follows that $G$ is not simple. If $[H,G] = 1$, then $H \le Z(G)$, and again it follows that $G$ is not simple.
	\end{proof}

\begin{lemma}
	\label{ICPhi_nilpotent_2}
	Let $G$ be a group, and let $H$ be an $IC\Phi$-subgroup of $G$. Suppose that $G' \le H$. Then $G$ is nilpotent. 
\end{lemma}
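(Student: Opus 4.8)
The plan is to show that $H = G$, from which nilpotence follows since $\Phi(H)$ being nilpotent forces $H$ nilpotent; but actually the cleaner route is to show directly that $G = \Phi(H)$ is impossible unless $H=G$, and then exploit $G' \le H$ together with the $IC\Phi$-condition. First I would observe that since $G' \le H$, the subgroup $H$ is normal in $G$ (it contains the derived subgroup), so $[H,G] \le [G,G] = G' \le H$. Hence $H \cap [H,G] = [H,G]$, and the $IC\Phi$-condition gives $[H,G] \le \Phi(H)$. In particular $G' = [G,G] \le [H,G] \le \Phi(H) \le \Phi(G)$ is not immediate because $\Phi(H)$ need not sit inside $\Phi(G)$ when $H$ is proper; so instead I would argue as follows: $G/H$ is abelian (since $G' \le H$), and I want to leverage $[H,G] \le \Phi(H)$ to climb down.

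The key step is to prove $H = G$. Suppose not. Then $H$ is a proper normal subgroup, so $H$ is contained in some maximal subgroup $M$ of $G$. Consider the factor group $G/[H,G]$. Since $[H,G] \le \Phi(H) \le H$, in $G/[H,G]$ the image $\bar H$ is central: indeed $[\bar H, \bar G] = \overline{[H,G]} = 1$, so $\bar H \le Z(G/[H,G])$. Thus $G/[H,G]$ is nilpotent of class at most $2$? Not quite — we get that $\bar H$ is central and $\bar G/\bar H \cong G/H$ is abelian, so $G/[H,G]$ has an abelian normal subgroup $\bar H$ with abelian quotient, hence $G/[H,G]$ is metabelian and in fact $(G/[H,G])' \le \bar H \cap Z(G/[H,G])$... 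Let me instead use the Frattini-type argument directly: from $[H,G] \le \Phi(H)$ and the fact that $\Phi(H)$ is nilpotent, together with $H \trianglelefteq G$, I would apply the standard lemma that if $[H,G] \le \Phi(H)$ then $H$ is "hypercentrally embedded" — more precisely, I would show $H \le Z_\infty(G)$, the hypercenter of $G$, by induction on $|H|$ using that $[H,G] \le \Phi(H) < H$ (proper since $H \ne 1$), so $[H,G]$ is a smaller subgroup satisfying the hypotheses (it is normal in $G$, and $[[H,G],G] \le [H,G]$... one needs $[H,G]$ itself to be an $IC\Phi$-subgroup, which may require more care).

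The main obstacle I anticipate is precisely this inductive descent: I need to verify that $[H,G]$ inherits the $IC\Phi$-property (or find a substitute invariant that does). A cleaner alternative avoiding induction: since $[H,G] \le \Phi(H)$, every maximal subgroup of $H$ contains $[H,G]$ and is therefore normal in $H$ (as $H/[H,G]$ has the maximal subgroup's image, and... no). The genuinely clean argument: $H/\Phi(H)$ is abelian? Yes — because $[H,G] \le \Phi(H)$ gives in particular $[H,H] \le [H,G] \le \Phi(H)$, so $H/\Phi(H)$ is abelian, hence $H$ is nilpotent. Now $H$ nilpotent and $G' \le H$ together with $[H,G] \le \Phi(H)$: we have $G'$ nilpotent, so $G$ is solvable. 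Then I would run a minimal counterexample argument: take $G$ a counterexample of minimal order. If $N$ is a minimal normal subgroup of $G$, then $N$ is an elementary abelian $p$-group, $N \le G' \le H$, and one checks $G/N$ with $H/N$ satisfies the hypotheses (using Lemma~\ref{2.1}(2)), so $G/N$ is nilpotent by minimality; since the class of nilpotent groups is closed under subdirect products, $G$ has a unique minimal normal subgroup $N$ and $\Phi(G) = 1$, whence $N = C_G(N) = F(G)$ and $N$ is complemented. The contradiction should come from $[H,G] \le \Phi(H)$: since $N \le H$ and $N$ is central in... Here I would use that $H \trianglelefteq G$ and $N$ minimal normal with $N \le H$, so $N \cap \Phi(H) \trianglelefteq G$ forces $N \le \Phi(H)$ or $N \cap \Phi(H) = 1$; combined with $\Phi(G) = 1$ and a Frattini argument this pins down the structure enough to contradict $G$ being non-nilpotent. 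The delicate point throughout is transferring the $IC\Phi$-hypothesis to quotients and subgroups cleanly, for which Lemma~\ref{2.1} is the right tool, so I expect the write-up to hinge on setting up the minimal counterexample so that Lemma~\ref{2.1}(2) applies verbatim.
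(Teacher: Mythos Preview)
Your proposal circles the right ideas but misses the one fact that makes the proof a two-liner. You correctly observe that $G' \le H$ forces $H \trianglelefteq G$, and hence $H \cap [H,G] = [H,G] \le \Phi(H)$. You then write that ``$\Phi(H)$ need not sit inside $\Phi(G)$ when $H$ is proper'' --- but this is precisely where you go astray: for a \emph{normal} subgroup $H$ of a finite group $G$ one always has $\Phi(H) \le \Phi(G)$ (this is \cite[Kapitel~III, Hilfssatz~3.3]{Huppert}, exactly the result the paper invokes). Since you have already established $H \trianglelefteq G$, this containment is available to you and should not be dismissed.

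With that fact the paper's argument is immediate: from $G' \le H$ one gets $[G',G] \le [H,G] \le \Phi(H) \le \Phi(G)$, so $\gamma_3(G/\Phi(G)) = 1$, hence $G/\Phi(G)$ is nilpotent, hence $G$ is nilpotent. Note also that your chain ``$G' = [G,G] \le [H,G]$'' points the wrong way --- in general only $[H,G] \le G'$ holds --- but what is actually needed is $[G',G] \le [H,G]$, which does follow from $G' \le H$. In fact your own observation that $G/[H,G]$ has $\bar H$ central with abelian quotient (hence is nilpotent of class at most $2$) already yields the conclusion once you know $[H,G] \le \Phi(G)$: then $G/\Phi(G)$ is a quotient of the nilpotent group $G/[H,G]$.

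Your fallback minimal-counterexample route can in principle be completed (one eventually forces $H \le F(G) = N$ with $N$ elementary abelian, so $\Phi(H) = 1$ and $[H,G] = 1$, a contradiction), but you leave it unfinished, and the detour is unnecessary given the Frattini inclusion above. The separate attempt to prove $H = G$ outright is doomed: take $G$ abelian and $H$ any proper subgroup.
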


\begin{proof}
	We have $[G',G] \le H \cap [H,G] \le \Phi(H)$. Applying \cite[Kapitel III, Hilfssatz 3.3]{Huppert}, we conclude that $[G',G] \le \Phi(G)$. It follows that 
	\begin{equation*}
		[(G/\Phi(G))',G/\Phi(G)] = [G'\Phi(G)/\Phi(G),G/\Phi(G)] = [G',G]\Phi(G)/\Phi(G) = 1. 
	\end{equation*} 
So the lower central series of $G/\Phi(G)$ terminates at $1$. Consequently, $G/\Phi(G)$ is nilpotent, and \cite[Kapitel III, Satz 3.7]{Huppert} implies that $G$ is nilpotent. 
\end{proof}

\begin{lemma}
	\label{minimal_non_nilpotent_groups}
	(\cite[Theorem 3.4.11]{Guo2000}, \cite[Kapitel III, Satz 5.2]{Huppert}) Let $G$ be a minimal non-nilpotent group. Then: 
	\begin{enumerate}
		\item[(1)] $|G| = p^aq^b$ with distinct prime numbers $p$, $q$ and positive integers $a$, $b$, where $G$ has a normal Sylow $p$-subgroup $P$ and cyclic Sylow $q$-subgroups.
		\item[(2)] $P/\Phi(P)$ is a chief factor of $G$. 
		\item[(3)] If $P$ is abelian, then $P$ is elementary abelian.
	\end{enumerate} 
\end{lemma}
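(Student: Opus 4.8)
This is the classical structure theorem for minimal non-nilpotent (Schmidt) groups, so the plan is to reconstruct the standard argument, isolating the one ingredient that really needs to be imported. The engine of the whole proof is the elementary remark that a proper subgroup $H$ of $G$ is nilpotent, hence --- once we know $|G|$ has only the two prime divisors $p$ and $q$ --- equals the direct product of its Sylow subgroups; so whenever we can exhibit a \emph{proper} subgroup of $G$ of the form $AB$ with $A$ a $p$-group and $B$ a $q$-group, we may conclude $[A,B]=1$. I would organize the proof into four steps: (a) $G$ is solvable; (b) $|\pi(G)|=2$; (c) $G$ has a normal Sylow $p$-subgroup $P$ and $Q\in\mathrm{Syl}_q(G)$ is cyclic; (d) the chief-factor statement~(2) and the elementary abelian statement~(3).

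For step (a) I would reduce to the simple case and then cite: every proper subgroup of $G$ is solvable and, by the correspondence theorem, every proper quotient $G/N$ (with $1\ne N\trianglelefteq G$) has all proper subgroups nilpotent, hence is nilpotent or again minimal non-nilpotent; so a counterexample of least order can have no proper non-trivial normal subgroup and must be a non-abelian simple group all of whose proper subgroups are nilpotent, which is classically impossible (for a maximal subgroup $M$ and $P\in\mathrm{Syl}_p(M)$ one checks $P\in\mathrm{Syl}_p(G)$ and $N_G(P)=M$, and then Burnside's transfer theorem, or a Sylow-counting argument on the conjugates of $M$, produces a normal $p$-complement; this is classical, see \cite[Kapitel III, Satz 5.2]{Huppert}). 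For step (b), since $G$ is solvable it has a Sylow basis $P_1,\dots,P_r$ (pairwise permutable Sylow subgroups, one for each prime in $\pi(G)$). If $r\ge 3$, then each $P_iP_j$ is a proper subgroup, hence nilpotent, hence $[P_i,P_j]=1$; as this holds for all $i,j$, the $P_i$ generate their internal direct product and $G$ is nilpotent, a contradiction. So $|G|=p^aq^b$ with $a,b\ge 1$.

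For step (c) I would first pass to $X:=G/\Phi(G)$, which is again minimal non-nilpotent --- it is not nilpotent, for otherwise $G$ would be by \cite[Kapitel III, Satz 3.7]{Huppert}, and all its proper subgroups are nilpotent --- and which satisfies $\Phi(X)=1$. Picking a minimal normal subgroup $N$ of $X$, it is an elementary abelian $s$-group with $s\in\{p,q\}$, and after relabelling I take $s=p$. Since $\Phi(X)=1$, some maximal subgroup $M$ of $X$ does not contain $N$, so $X=NM$; here $N\cap M$ is normal in $X$ and properly contained in $N$, hence trivial, so $X=N\rtimes M$ with $M$ nilpotent. Writing $M=M_p\times M_q$ (so that $M_q\in\mathrm{Syl}_q(X)$), the $p$-group $NM_p$ is a Sylow $p$-subgroup of $X$ normalized by $M_q$ (which normalizes both $N$ and $M_p$), hence $NM_p\trianglelefteq X$. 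Transferring back, $P\Phi(G)/\Phi(G)$ is this normal Sylow $p$-subgroup of $X$, so $P\Phi(G)\trianglelefteq G$, and the Frattini argument applied to $P\in\mathrm{Syl}_p(P\Phi(G))$ gives $G=\Phi(G)N_G(P)=N_G(P)$, i.e.\ $P\trianglelefteq G$; write $G=P\rtimes Q$. If $Q$ were non-cyclic, it would have two distinct maximal subgroups $Q_1\ne Q_2$, preimages of distinct hyperplanes of $Q/\Phi(Q)$, with $\langle Q_1,Q_2\rangle=Q$; then each $PQ_i$ is proper and nilpotent, so $Q_i\le C_G(P)$, whence $Q=\langle Q_1,Q_2\rangle\le C_G(P)$ and $G$ is nilpotent --- a contradiction. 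So $Q$ is cyclic.

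Finally, step (d). Since $[P,P]\le\Phi(P)$, the subgroup $P$ acts trivially on $P/\Phi(P)$, so the conjugation action of $G=PQ$ on the elementary abelian group $V:=P/\Phi(P)$ factors through $Q$, making $V$ an $\mathbb{F}_pQ$-module; as $(|Q|,p)=1$, Maschke's theorem gives $V=V_1\oplus\dots\oplus V_m$ with the $V_i$ irreducible $G$-submodules. If $m\ge 2$, let $R_i\le P$ be the preimage of $\bigoplus_{j\ne i}V_j$; then $\Phi(P)<R_i<P$ and $R_i\trianglelefteq G$, so $R_iQ$ is proper and nilpotent and $Q\le C_G(R_i)$; but the images of $R_1,\dots,R_m$ span $V$, so $\langle R_1,\dots,R_m\rangle\Phi(P)=P$, hence $\langle R_1,\dots,R_m\rangle=P$ and $Q\le C_G(P)$, so $G$ is nilpotent --- a contradiction. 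Therefore $V=P/\Phi(P)$ has no proper non-zero $G$-submodule, i.e.\ is a chief factor of $G$; this is (2). For (3), assume $P$ abelian and suppose $\Phi(P)\ne 1$; then $P$ is not elementary abelian, so $\Omega_1(P)<P$, and $\Omega_1(P)Q$ is proper and nilpotent, so $Q$ centralizes $\Omega_1(P)$. By coprime action $P=C_P(Q)\times[P,Q]$ with $[[P,Q],Q]=[P,Q]$, and $\Omega_1([P,Q])\le\Omega_1(P)\cap[P,Q]\le C_P(Q)\cap[P,Q]=1$, forcing $[P,Q]=1$ and $G$ nilpotent --- a contradiction. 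Hence $\Phi(P)=1$ and $P$ is elementary abelian, which is (3). The main obstacle is step (a); once solvability is in hand, everything else is a disciplined application of the remark that proper subgroups of $G$ are nilpotent.
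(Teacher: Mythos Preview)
The paper does not supply its own proof of this lemma; it is quoted as a classical result with citations to \cite[Theorem 3.4.11]{Guo2000} and \cite[Kapitel III, Satz 5.2]{Huppert}. Your reconstruction of the standard Schmidt--Iwasawa argument is correct and follows the same lines as those references: solvability via reduction to the simple case, $|\pi(G)|=2$ via a Sylow basis, normality of one Sylow subgroup and cyclicity of the other via the Frattini quotient and the ``proper subgroups are nilpotent'' trick, and statements~(2) and~(3) via Maschke's theorem and the coprime decomposition $P=C_P(Q)\times[P,Q]$. The one place you defer to the literature---the non-existence of a non-abelian simple group with all proper subgroups nilpotent---is exactly the step Huppert isolates as the substantive input, so your proof is complete modulo that citation.
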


\begin{lemma}
	\label{Q8_free_minimal_non_2_nilpotent} 
Let $G$ be a $Q_8$-free minimal non-$2$-nilpotent group. Then $G$ has an elementary abelian Sylow $2$-subgroup. 
\end{lemma}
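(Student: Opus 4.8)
The plan is to reduce the statement to the structure theory of minimal non-nilpotent groups, and then, assuming a Sylow $2$-subgroup of $G$ is non-abelian, to manufacture a section isomorphic to $Q_8$ by exploiting the action of a Sylow $q$-subgroup.

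First I would note that, since $G$ is not $2$-nilpotent while every proper subgroup of $G$ is $2$-nilpotent and $2$ is the smallest prime dividing $|G|$, $G$ is in fact a minimal non-nilpotent group (this is well known). By Lemma~\ref{minimal_non_nilpotent_groups}, $|G|=p^{a}q^{b}$, $G$ has a normal Sylow $p$-subgroup $P$, and the Sylow $q$-subgroups of $G$ are cyclic. Since a group with a cyclic Sylow $2$-subgroup has a normal $2$-complement (a standard consequence of Burnside's transfer theorem, see \cite{Huppert}) and $G$ has none, its Sylow $2$-subgroups are non-cyclic; hence $p=2$, $q$ is odd, and $G=P\rtimes Q$ with $Q$ cyclic. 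By Lemma~\ref{minimal_non_nilpotent_groups}(3) it suffices to show that $P$ is abelian, so assume not. Using the finer structure of minimal non-nilpotent groups (or, if one wants to rely only on Lemma~\ref{minimal_non_nilpotent_groups}, by first replacing $G$ by $G/\gamma_{3}(P)$ --- again a $Q_{8}$-free minimal non-nilpotent group with non-abelian normal Sylow $2$-subgroup --- and then a short computation with $C_{P/P'}(y)$), one obtains that $P$ is special with $\Phi(P)=Z(P)=P'$ elementary abelian and $\exp P=4$. Write $V:=P/\Phi(P)$ and fix a generator $y$ of $Q$. I will use two facts: since $Q$ acts on the chief factor $V$ irreducibly and non-trivially, $C_{V}(y)=0$, so $C_{P}(y)\le\Phi(P)$; and $\Phi(P)\langle y\rangle$ is a proper, hence nilpotent, subgroup of $G$, so $y$ centralizes $\Phi(P)$. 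In particular $C_{P}(y)=\Phi(P)$ and $y$ fixes $P^{2}\le\Phi(P)$ pointwise.

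Now the main step. Pick $a\in P$ of order $4$ and put $z:=a^{2}$; then $1\ne z\in P^{2}\le Z(P)$ has order $2$. For every $i\ge 0$ the element $a^{y^{i}}$ has order $4$ with square $(a^{2})^{y^{i}}=z$, because $y$ centralizes $\Phi(P)$. As $a\notin\Phi(P)$ and $V$ is $\langle y\rangle$-irreducible, the subgroup $N:=\langle a^{y^{i}}:i\ge 0\rangle$ satisfies $N\Phi(P)=P$, hence $N=P$; since $P$ is non-abelian the $a^{y^{i}}$ cannot all commute, so $[a,a^{y^{k}}]\ne 1$ for some $k\ge 1$. Put $b:=a^{y^{k}}$ and $c:=[a,b]\in P'=\Phi(P)$; then $c^{2}=[a^{2},b]=[z,b]=1$, so $c\in Z(P)$ has order $2$. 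Moreover $q\nmid k$ (else $y^{k}$ centralizes $P$ and $c=1$), so $y^{k}$ acts non-trivially on $V$, whence $C_{V}(y^{k})=0$ and $\bar b=\bar a^{y^{k}}\ne\bar a$ in $V$. Consider $H:=\langle a,b\rangle$ and the section $\overline{H}:=H/\langle zc\rangle$ (which is $H$ itself when $c=z$). In $\overline{H}$ the images of $a$ and $b$ have common square $\bar z\ne 1$ (note $z\notin\langle zc\rangle$) and commutator $\bar c=\bar z$ (since $zc^{-1}=zc\in\langle zc\rangle$), while $\bar a\ne\bar b$ because their images in $V$ already differ; as $\bar z$ is central in $\overline{H}$ of order $2$, these relations force $|\overline{H}|=8$ with a unique involution, so $\overline{H}\cong Q_{8}$. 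This contradicts the $Q_{8}$-freeness of $G$. Therefore $P$ is abelian, and Lemma~\ref{minimal_non_nilpotent_groups}(3) shows $P$ is elementary abelian.

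I expect the main obstacle to be precisely this last construction: the essential point is that the fixed-point behaviour of the $q$-element $y$ forces two elements of $P$ of order $4$ sharing the same square which fail to commute, and this is exactly the configuration that yields a $Q_{8}$ rather than a harmless $D_{8}$. A secondary technical matter is justifying and correctly referencing the structural input that a non-abelian $P$ is special of exponent $4$; as indicated above this can be taken either from the classical classification of minimal non-nilpotent groups or, staying within the tools of the excerpt, recovered by passing to $G/\gamma_{3}(P)$ and a brief coprime-action argument.
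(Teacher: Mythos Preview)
Your argument is correct, but it takes a genuinely different route from the paper's.

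The paper, after reducing to a minimal non-nilpotent group with normal non-abelian Sylow $2$-subgroup $P$, invokes a theorem of Ward (cited from Berkovich--Janko) that every non-abelian $Q_8$-free $2$-group has a \emph{characteristic} maximal subgroup. Since $P\trianglelefteq G$, this yields a $G$-normal maximal subgroup $P_1$ of $P$; combined with the fact that $P/\Phi(P)$ is a chief factor (Lemma~\ref{minimal_non_nilpotent_groups}(2)) one gets $\Phi(P)=P_1$, so $P$ is cyclic, a contradiction. That is the entire proof: one outside citation plus two lines.

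You instead use the finer Schmidt structure ($P$ special of exponent $4$ with $\Phi(P)=Z(P)=P'$ elementary abelian) and then explicitly manufacture a $Q_8$-section by conjugating an order-$4$ element $a$ by a generator $y$ of $Q$: the fixed-point-free action of $y$ on $P/\Phi(P)$ and the triviality of its action on $\Phi(P)$ force some $b=a^{y^k}$ with $a^2=b^2=z$, $[a,b]=c\ne1$, and $\bar a\ne\bar b$ in $P/\Phi(P)$; then $\langle a,b\rangle/\langle zc\rangle\cong Q_8$. This is a correct and pleasant hands-on construction. Its cost is that it leans on the full classical description of Schmidt groups (the alternative you sketch via $G/\gamma_3(P)$ would need a bit more to pin down $\Phi(P)=Z(P)$ and exponent~$4$). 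Its benefit is that it avoids the rather specialized Ward theorem and stays within standard textbook material. A small wording quibble: the clause ``$q\nmid k$'' is not literally what you need when $|Q|>q$; what you actually use (and argue correctly in the parenthesis) is that $y^{k}$ acts non-trivially on $V$, which follows from $c\ne1$ together with coprime action.
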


\begin{proof}
By \cite[Kapitel IV, Satz 5.4]{Huppert}, $G$ is minimal non-nilpotent. Lemma \ref{minimal_non_nilpotent_groups} (1) implies that $G$ has a normal Sylow $2$-subgroup $P$. 

Assume that $P$ is not elementary abelian. Then $P$ is non-abelian by Lemma \ref{minimal_non_nilpotent_groups} (3). Since $G$ is $Q_8$-free, we have that $P$ is $Q_8$-free. By a result of Ward, namely by \cite[Theorem 56.1]{BerkovichJanko}, any non-abelian $Q_8$-free $2$-group has a characteristic maximal subgroup. Since $P$ is normal in $G$, it follows that there is a maximal subgroup $P_1$ of $P$ which is normal in $G$. We have $\Phi(P) \le P_1$, and $P/\Phi(P)$ is a chief factor of $G$ by Lemma \ref{minimal_non_nilpotent_groups} (2). It follows that $\Phi(P) = P_1$. This implies that $P$ is cyclic and hence abelian. On the other hand, we have observed above that $P$ is non-abelian. This is a contradiction. 

So we have that $P$ is elementary abelian, and the lemma follows.   
\end{proof}

\begin{lemma}
	\label{Frobenius_p_nilpotence} 
	(\cite[Chapter 7, Theorem 4.5]{Gorenstein}) Let $p$ be a prime number, and let $G$ be a group. If $N_G(H)/C_G(H)$ is a $p$-group for any non-trivial $p$-subgroup $H$ of $G$, then $G$ is $p$-nilpotent. 
\end{lemma}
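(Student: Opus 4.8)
This is Frobenius's classical normal $p$-complement theorem, which I would prove by induction on $|G|$ (equivalently, by examining a minimal counterexample), the essential tool being the transfer homomorphism.

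I would first record that the hypothesis is inherited by subgroups — for a non-trivial $p$-subgroup $H$ of $K \le G$ the group $N_K(H)/C_K(H)$ embeds into $N_G(H)/C_G(H)$ — and, via Frattini-type arguments, by the quotients $G/O_{p'}(G)$ and $G/O_p(G)$. This reduces the problem to the case $O_{p'}(G) = O_p(G) = 1$, in which every proper subgroup of $G$ is $p$-nilpotent and the Sylow $p$-subgroup $P$ of $G$ is non-trivial; moreover $P$ is not normal in $G$, since if it were, then the hypothesis applied with $H = P$, together with $O_{p'}(G) = 1$, would force $G$ to be a $p$-group. Hence $N_G(P)$ is a proper, and therefore $p$-nilpotent, subgroup of $G$; applying the hypothesis with $H = P$ gives $N_G(P) = P C_G(P)$, and a short order computation — using that $Z(P)$ is the central Sylow $p$-subgroup of $C_G(P)$ — shows that $K := O_{p'}(N_G(P))$ centralizes $P$, so that $N_G(P) = P \times K$.

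If $P$ is abelian, then $N_G(P) = C_G(P)$ and Burnside's normal $p$-complement theorem immediately produces a normal $p$-complement of $G$, a contradiction; this is the easy case. In general I would use the transfer $\tau \colon G \to P/P'$. The transfer evaluation formula gives $\tau(g) \equiv \prod_i x_i g^{n_i} x_i^{-1} \pmod{P'}$ for $g \in P$, with $\sum_i n_i = |G:P|$ and each $x_i g^{n_i} x_i^{-1} \in P$. Each factor $x_i g^{n_i} x_i^{-1}$ is $G$-conjugate to $g^{n_i}$; once one knows that $N_G(P)$ controls $G$-fusion in $P$, and since $K$ centralizes $P$ (so $N_G(P)$ acts on $P$ by inner automorphisms only), it follows that $x_i g^{n_i} x_i^{-1} \equiv g^{n_i} \pmod{P'}$, whence $\tau(g) \equiv g^{|G:P|} \pmod{P'}$. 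As $|G:P|$ is prime to $p$, the restriction $\tau|_P$ is onto $P/P'$, so $\tau$ is surjective, and $\ker\tau$ is a proper normal subgroup of $G$ with $O_{p'}(\ker\tau) = 1$ (this subgroup being characteristic in $\ker\tau \trianglelefteq G$ and hence contained in $O_{p'}(G) = 1$). By induction $\ker\tau$ is a $p$-group, so $G = (\ker\tau) P$ is a $p$-group — a contradiction.

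The one genuinely substantial step, and the main obstacle, is the fusion-control assertion used above when $P$ is non-abelian: Burnside's fusion lemma only gives that $N_G(P)$ controls $G$-fusion of subgroups of $Z(P)$, which is enough for Burnside's transfer computation when $P$ is abelian but not in general. To obtain control of fusion throughout $P$ I would invoke Grün's theorems — which express $P \cap O^p(G)$ through fusion data concentrated in $N_G(P)$ and in Sylow intersections, and which are applicable here because every $N_G(Q)$ with $1 \ne Q \le P$ is a proper, hence $p$-nilpotent, subgroup of $G$ — or, alternatively, feed the hypothesis into Alperin's fusion theorem. Granting this, the transfer argument closes the induction.
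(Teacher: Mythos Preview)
The paper does not prove this lemma at all: it is stated with a bare citation to \cite[Chapter 7, Theorem 4.5]{Gorenstein} and is used as a black box later in the proof of Theorem~\ref{theorem_third_maximal_subgroups}. So there is no ``paper's own proof'' to compare against; any correct proof you supply goes strictly beyond what the paper does.

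Your outline is the standard transfer-based argument and is broadly sound, with one caveat. The passage of the hypothesis to $G/O_{p'}(G)$ is unproblematic, but your claimed reduction to $O_p(G)=1$ is not needed and is also not obviously valid as stated: it is not clear that centralizers in $G/O_p(G)$ are controlled well enough to inherit the hypothesis. The usual route (and Gorenstein's) is to reduce only to $O_{p'}(G)=1$, observe that every proper subgroup is $p$-nilpotent by induction, and then use the focal subgroup theorem (or equivalently your transfer computation) together with the fact that, under the hypothesis, $G$-conjugate elements of $P$ are already $P$-conjugate. This last fusion statement is exactly the substantial step you flag; Gorenstein obtains it by an induction inside the local subgroups $N_G(Q)$ for $1\ne Q\le P$ (each of which is $p$-nilpotent), which is essentially the Gr\"un/Alperin-type argument you allude to. With that correction your sketch matches the classical proof.
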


\begin{lemma}
	\label{Burnside_p_nilpotence}
	(\cite[7.2.2]{KurzweilStellmacher}) Let $G$ be a non-trivial group, and let $p$ be the smallest prime divisor of $|G|$. Suppose that the Sylow $p$-subgroups of $G$ are cyclic. Then $G$ is $p$-nilpotent. 
\end{lemma}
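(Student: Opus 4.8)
The plan is to derive this from the Frobenius normal $p$-complement criterion recorded in Lemma \ref{Frobenius_p_nilpotence}. By that criterion it suffices to verify that $N_G(H)/C_G(H)$ is a $p$-group for every non-trivial $p$-subgroup $H$ of $G$.

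So fix a Sylow $p$-subgroup $P$ of $G$; by hypothesis $P$ is cyclic. By Sylow's theorem every $p$-subgroup $H$ of $G$ lies in some conjugate of $P$ and is therefore cyclic as well, say $|H| = p^k$ with $k \ge 1$. The natural map $N_G(H)/C_G(H) \hookrightarrow \operatorname{Aut}(H)$ is injective, and since $H$ is cyclic of order $p^k$ we have $|\operatorname{Aut}(H)| = p^{k-1}(p-1)$. Consequently every prime divisor of $|N_G(H)/C_G(H)|$ is either equal to $p$ or a divisor of $p-1$.

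On the other hand $|N_G(H)/C_G(H)|$ divides $|N_G(H)|$, which divides $|G|$, so every prime divisor of $|N_G(H)/C_G(H)|$ divides $|G|$ and is therefore $\ge p$, because $p$ is the smallest prime divisor of $|G|$. Since any prime dividing $p-1$ is strictly less than $p$, it follows that $p$ is the only prime that can divide $|N_G(H)/C_G(H)|$; hence $N_G(H)/C_G(H)$ is a $p$-group. As $H$ was an arbitrary non-trivial $p$-subgroup, Lemma \ref{Frobenius_p_nilpotence} yields that $G$ is $p$-nilpotent. There is no real obstacle in this argument; the only point that needs a moment's care is the observation that \emph{every} $p$-subgroup of $G$ (not just a Sylow $p$-subgroup) is cyclic, which is immediate since subgroups of cyclic groups are cyclic, and this is exactly what makes the order of $\operatorname{Aut}(H)$ have $p'$-part dividing $p-1$.
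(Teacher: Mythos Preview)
Your argument is correct. Note, however, that the paper does not actually give a proof of this lemma; it is simply quoted from \cite[7.2.2]{KurzweilStellmacher}. So there is no ``paper's own proof'' to compare against in any detail.

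That said, it is worth remarking that the usual textbook route (and the one in \cite{KurzweilStellmacher}) is slightly different from yours: one applies Burnside's normal $p$-complement theorem directly to a Sylow $p$-subgroup $P$. Since $P$ is cyclic and hence abelian, $P \le C_G(P)$, so $N_G(P)/C_G(P)$ has $p'$-order; but it also embeds in $\mathrm{Aut}(P)$, whose $p'$-part divides $p-1$, and the minimality of $p$ forces $N_G(P)=C_G(P)$, i.e.\ $P \le Z(N_G(P))$. Burnside's theorem then yields the normal $p$-complement. Your approach instead checks the Frobenius criterion (Lemma~\ref{Frobenius_p_nilpotence}) at \emph{every} non-trivial $p$-subgroup $H$, which requires the extra (easy) observation that every such $H$ is cyclic. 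The payoff of your route is that it uses only tools already recorded in the present paper, since Burnside's transfer theorem is not stated here but Lemma~\ref{Frobenius_p_nilpotence} is.
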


\begin{lemma}
\label{lemma_hypercenter} 
(\cite[Appendix C, Theorem 6.3]{Weinstein}) Let $p$ be a prime number, and let $P$ be a normal $p$-subgroup of a group $G$ such that $G/C_G(P)$ is a $p$-group. Then $P \le Z_{\infty}(G)$. 
\end{lemma}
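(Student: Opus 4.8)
The plan is to reduce the claim to the statement that the conjugation action of $G$ on $P$ is \emph{nilpotent}, i.e.\ that $[P,G;n] = 1$ for some $n \ge 0$, where $[P,G;0] := P$ and $[P,G;k+1] := [[P,G;k],G]$. Granting this, put $P_i := [P,G;n-i]$ for $0 \le i \le n$. Since $P$ is normal in $G$, each term $[P,G;k]$ is a normal subgroup of $G$, so every $P_i$ is normal in $G$; moreover $P_0 = [P,G;n] = 1$, $P_n = P$, $P_i \le P_{i+1}$, and $[P_{i+1},G] = [P,G;n-i] = P_i$. An induction on $i$ then shows $P_i \le Z_i(G)$: this is clear for $i = 0$, and if $P_i \le Z_i(G)$ then $[P_{i+1},G] = P_i \le Z_i(G)$, so the image of $P_{i+1}$ in $G/Z_i(G)$ lies in $Z(G/Z_i(G)) = Z_{i+1}(G)/Z_i(G)$, whence $P_{i+1} \le Z_{i+1}(G)$. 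In particular $P = P_n \le Z_n(G) \le Z_{\infty}(G)$, as desired.

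It remains to prove $[P,G;n] = 1$ for some $n$. Set $C := C_G(P)$, a normal subgroup of $G$, and $Q := G/C$, which is a $p$-group by hypothesis. Conjugation embeds $Q$ into $\mathrm{Aut}(P)$, so the semidirect product $M := P \rtimes Q$ is defined; since $|M| = |P|\,|Q|$ is a power of $p$, $M$ is a finite $p$-group and hence nilpotent, say of class $c$, so $\gamma_{c+1}(M) = 1$. Because $[p,g] = p^{-1}p^g$ depends only on the coset $gC$, identifying the action of $G$ on $P$ with that of $Q$ gives $[P,G;i] = [P,Q;i]$ for all $i \ge 0$, where the right-hand side is computed inside $M$. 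A straightforward induction yields $[P,Q;i] \le \gamma_{i+1}(M)$: indeed $[P,Q] \le [M,M] = \gamma_2(M)$, and $[P,Q;i] \le \gamma_{i+1}(M)$ implies $[P,Q;i+1] = [[P,Q;i],Q] \le [\gamma_{i+1}(M),M] = \gamma_{i+2}(M)$. Taking $i = c$ gives $[P,G;c] = [P,Q;c] \le \gamma_{c+1}(M) = 1$.

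The only point requiring a little care is the identity $[P,G;i] = [P,Q;i]$, but this is routine: $[P,G]$, which lies in $P$ since $P \trianglelefteq G$, is generated by the elements $p^{-1}p^g$ (for $p \in P$, $g \in G$), each of which is unchanged when $g$ is replaced by any element of $gC$, so $[P,G]$ coincides with the commutator of $P$ with the image of $G$ in $\mathrm{Aut}(P)$; iterating gives the claim for every $i$. There is no real obstacle here — the substance of the argument is just the classical fact that a $p$-group acting on a $p$-group acts nilpotently, which is immediate once one forms the semidirect product and observes that it is a $p$-group. (Alternatively, one can avoid the semidirect product by writing $G = SC_G(P)$ for a Sylow $p$-subgroup $S$ of $G$, checking inductively that $[P,G;i] = [P,S;i]$ by means of $[P,C_G(P)] = 1$, and invoking nilpotence of the $p$-group $PS$.)
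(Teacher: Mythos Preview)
Your proof is correct. The paper, however, does not supply its own argument for this lemma: it is merely quoted with a reference to \cite[Appendix~C, Theorem~6.3]{Weinstein}, so there is no proof in the paper to compare yours against. What you have written is the standard argument (form the $p$-group $P\rtimes(G/C_G(P))$, use its nilpotence to force $[P,G;n]=1$ for some $n$, and then feed the resulting descending central series into the upper central series of $G$); the alternative you sketch at the end, writing $G=S\,C_G(P)$ with $S\in\mathrm{Syl}_p(G)$ and using the nilpotence of the $p$-group $SP$, works equally well and is perhaps slightly cleaner since it avoids introducing the auxiliary semidirect product.
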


To state the next two lemmas, we recall some definitions. Let $\mathfrak{F}$ be a formation, let $G$ be a group, and let $H/K$ be a chief factor of $G$. Then $H/K$ is said to be \textit{$\mathfrak{F}$-central} if $H/K \rtimes G/C_G(H/K) \in \mathfrak{F}$. Note that $H/K$ is $\mathfrak{U}$-central if and only if $H/K$ is cyclic. A normal subgroup $N$ of $G$ is said to be \textit{$\mathfrak{F}$-central} if any chief factor of $G$ below $N$ is $\mathfrak{F}$-central. The product of all $\mathfrak{F}$-central normal subgroups of $G$ is denoted by $Z_{\mathfrak{F}}(G)$.

\begin{lemma}
\label{lemma_formations_containing_U}
(\cite[Lemma 3.3]{GuoSkiba2012}) Let $\mathfrak{F}$ be a solvably saturated formation containing $\mathfrak{U}$. Let $G$ be a group and $E$ be a normal subgroup of $G$ such that $G/E \in \mathfrak{F}$ and $E \le Z_{\mathfrak{U}}(G)$. Then $G \in \mathfrak{F}$. 
\end{lemma}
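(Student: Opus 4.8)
\emph{Proof proposal.} The plan is to recast the hypothesis in terms of the $\mathfrak F$-hypercenter and then appeal to the composition (Baer-local) description of solvably saturated formations. First I would note that, since $\mathfrak U\subseteq\mathfrak F$, every $\mathfrak U$-central chief factor of $G$ is $\mathfrak F$-central: a $\mathfrak U$-central chief factor $H/K$ is cyclic, say of prime order $q$, and then $H/K\rtimes G/C_G(H/K)$ is an extension of a cyclic group of order $q$ by a subgroup of $\operatorname{Aut}(H/K)$, which is abelian of order dividing $q-1$; hence this semidirect product is supersolvable and so lies in $\mathfrak U\subseteq\mathfrak F$. Consequently $E\le Z_{\mathfrak U}(G)\le Z_{\mathfrak F}(G)$, so it is enough to establish the following standard fact: \emph{if $\mathfrak F$ is a solvably saturated formation, $N\trianglelefteq G$ with $N\le Z_{\mathfrak F}(G)$, and $G/N\in\mathfrak F$, then $G\in\mathfrak F$.}

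To prove this I would use that a solvably saturated formation is Baer-local, so $\mathfrak F=\mathrm{CLF}(f)$ (the Baer-local formation locally defined by $f$) for the canonical composition local function $f$, and that membership in $\mathfrak F$ can be checked chief factor by chief factor: $G\in\mathfrak F$ provided that for every abelian chief factor $H/K$ of $G$, a $p$-group, one has $G/C_G(H/K)\in f(p)$, together with the usual companion condition at the non-abelian chief factors. Take a chief series of $G$ running through $N$, so that every chief factor lies entirely below $N$ or entirely above it. For a chief factor $H/K$ with $N\le K$ one has $C_{G/N}(H/K)=C_G(H/K)/N$, hence $G/C_G(H/K)\cong (G/N)/C_{G/N}(H/K)$, and the required condition holds because $G/N\in\mathfrak F$. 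For a chief factor $H/K$ with $H\le N$, the assumption $N\le Z_{\mathfrak F}(G)$ says that $H/K$ is $\mathfrak F$-central, i.e. $H/K\rtimes G/C_G(H/K)\in\mathfrak F$, which for the canonical $f$ is precisely the assertion $G/C_G(H/K)\in f(p)$ for the primes $p\mid |H/K|$. Thus all chief factors of $G$ pass the test and $G\in\mathfrak F$; applying this with $N=E$ yields the lemma.

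The essential ingredient, and the part I expect to be the real obstacle to a fully self-contained write-up, is the Baer-local machinery itself: the theorem that ``solvably saturated'' is equivalent to ``Baer-local'', the existence of the canonical composition local function, and the equivalence between a chief factor being $\mathfrak F$-central and the local condition $G/C_G(H/K)\in f(p)$. These are exactly the facts developed in the theory of formations (see \cite{Guo2000}), and the statement itself is \cite[Lemma 3.3]{GuoSkiba2012}, so in the paper it is simply quoted. A purely elementary route, via a minimal counterexample $G$, reducing $D:=G^{\mathfrak F}$ to prime order by passing to $G/N$ for a minimal normal $N\le D$, and then reducing to $\Phi(E)=1$ by applying the solvable-saturation property to the solvable normal subgroup $E$, gets one close; but closing the last configuration still appears to require the local description of $\mathfrak F$.
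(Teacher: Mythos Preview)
The paper does not prove this lemma at all; it is simply quoted from \cite[Lemma~3.3]{GuoSkiba2012} as a black box, exactly as you anticipated in your final paragraph. Your outline is a correct sketch of how one would prove it: the reduction $Z_{\mathfrak U}(G)\le Z_{\mathfrak F}(G)$ via $\mathfrak U\subseteq\mathfrak F$ is straightforward, and the remaining step---that $N\le Z_{\mathfrak F}(G)$ together with $G/N\in\mathfrak F$ forces $G\in\mathfrak F$ for a solvably saturated $\mathfrak F$---is the standard application of the Baer-local (composition formation) description, checking the local condition $G/C_G(H/K)\in f(p)$ on a chief series through $N$. You are also right that this last step is not elementary and genuinely rests on the equivalence ``solvably saturated $\Leftrightarrow$ Baer-local'' and the associated chief-factor criterion; there is no shorter route in the paper to compare against.
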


\begin{lemma}
\label{lemma_formations_generalized_fitting} 
(\cite[Theorem B]{Skiba2010}) Let $\mathfrak{F}$ be a formation. Let $G$ be a group, and let $E$ be a normal subgroup of $G$ such that $F^{*}(E) \le Z_{\mathfrak{F}}(G)$. Then $E \le Z_{\mathfrak{F}}(G)$. 
\end{lemma}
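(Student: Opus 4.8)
The plan is to derive this statement --- which is Skiba's Theorem B --- by a minimal-counterexample argument, inducting on $|G| + |E|$. So I would suppose that $(G, E)$ is a pair with $|G| + |E|$ minimal subject to $F^*(E) \le Z := Z_{\mathfrak{F}}(G)$ but $E \not\le Z$. Since the trivial subgroup lies in $Z$, we have $E \ne 1$, and the fundamental property $C_E(F^*(E)) \le F^*(E)$ of the generalized Fitting subgroup forces $F^*(E) \ne 1$; as $F^*(E) \le Z \cap E$, this gives $Z \cap E \ne 1$. I would also record at the outset the elementary fact that, for a formation $\mathfrak{F}$ and a normal subgroup $N$ of $G$ with $N \le Z_{\mathfrak{F}}(G)$, one has $Z_{\mathfrak{F}}(G/N) = Z_{\mathfrak{F}}(G)/N$; this is immediate from the definition of the $\mathfrak{F}$-hypercentre together with the observation that $\mathfrak{F}$-centrality of a chief factor is unchanged on passing to $G/N$ whenever $N$ lies in its centralizer.

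Next I would pick a minimal normal subgroup $N$ of $G$ contained in $Z \cap E$, and note that $N \le F^*(E)$: if $N$ is abelian it is a nilpotent normal subgroup of $E$, hence lies in $F(E)$; if $N$ is non-abelian, each of its simple direct factors is a subnormal quasisimple subgroup of $E$, hence a component, so $N \le E(E)$. Thus $F^*(E)/N \le Z/N = Z_{\mathfrak{F}}(G/N)$. Let $L \trianglelefteq G$ be the preimage in $E$ of $F^*(E/N)$. If $L \lneq E$, then (as $L \trianglelefteq E$) $F^*(L) \le F^*(E) \le Z$, and since $(G, L)$ is strictly smaller than $(G, E)$, minimality gives $L \le Z$; hence $F^*(E/N) = L/N \le Z_{\mathfrak{F}}(G/N)$, and applying the induction hypothesis to $(G/N, E/N)$ yields $E/N \le Z_{\mathfrak{F}}(G/N) = Z/N$, i.e. $E \le Z$, a contradiction. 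Therefore $L = E$, i.e. $E/N$ is quasinilpotent; and $F^*(E) \lneq E$ (otherwise $E = F^*(E) \le Z$), so $F^*(E)/N \lneq E/N$.

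It remains to treat this reduced configuration, and this is the heart of the matter: writing $\overline{G} = G/N$ and $\overline{E} = E/N$, we have $\overline{E}$ quasinilpotent, $\overline{M} := F^*(E)/N$ with $\overline{M} \le Z_{\mathfrak{F}}(\overline{G})$ and $\overline{M} \lneq \overline{E}$, and we must show that every $\overline{G}$-chief factor $\overline{H}/\overline{K}$ with $\overline{M} \le \overline{K} \lneq \overline{H} \le \overline{E}$ is $\mathfrak{F}$-central --- this contradicts $\overline{E} \not\le Z_{\mathfrak{F}}(\overline{G})$ (which holds because otherwise $E \le Z$) and finishes the proof. The approach I would take is to exploit, on the one hand, that a quasinilpotent group induces only inner automorphisms on each of its chief factors --- which, via Clifford's theorem (applicable since $\overline{E} \trianglelefteq \overline{G}$ and $\overline{H}/\overline{K}$ is $\overline{G}$-irreducible), pins down the $\overline{E}$-module $\overline{H}/\overline{K}$ in terms of $\overline{M}$, $F(\overline{E})$ and the layer $E(\overline{E})$ --- and, on the other hand, that $\mathfrak{F}$ is closed under homomorphic images and subdirect products. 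Concretely, one would realise the test group $(\overline{H}/\overline{K}) \rtimes \overline{G}/C_{\overline{G}}(\overline{H}/\overline{K})$ as a homomorphic image, or a subdirect product of homomorphic images, of the test groups attached to $\overline{G}$-chief factors lying below $\overline{M}$, which are already known to lie in $\mathfrak{F}$; closure of $\mathfrak{F}$ then gives $(\overline{H}/\overline{K}) \rtimes \overline{G}/C_{\overline{G}}(\overline{H}/\overline{K}) \in \mathfrak{F}$, as required.

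I expect this last step to be the main obstacle. The reductions in the first two paragraphs are essentially formal, but identifying the chief factors of $\overline{E}$ lying above $F^*(E)/N$ and matching their $\mathfrak{F}$-centrality test groups with those of factors lying below $F^*(E)/N$ requires a careful case analysis --- abelian versus non-abelian chief factor, and the interaction between $F(\overline{E})$ and $E(\overline{E})$ --- and a fair amount of bookkeeping rather than a single decisive idea. It is also exactly here that one uses only that $\mathfrak{F}$ is a formation, namely its closure under homomorphic images and subdirect products, and nothing like saturation; this is what makes the result valid for arbitrary formations.
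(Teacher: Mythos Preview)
The paper does not give its own proof of this lemma: it is stated as a citation of \cite[Theorem~B]{Skiba2010} and is invoked without argument in the proof of Theorem~\ref{theorem_formations_2}. There is therefore no proof in the paper to compare your attempt against.

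As a brief independent comment: your reduction steps --- passing to a quotient by a minimal normal subgroup $N \le Z_{\mathfrak{F}}(G) \cap E$, using $Z_{\mathfrak{F}}(G/N) = Z_{\mathfrak{F}}(G)/N$, and splitting according to whether the preimage of $F^{*}(E/N)$ is proper in $E$ --- are standard and sound. However, the decisive step, namely showing in the reduced configuration (where $\overline{E} = E/N$ is quasinilpotent and $\overline{M} = F^{*}(E)/N \lneq \overline{E}$ lies in $Z_{\mathfrak{F}}(\overline{G})$) that every $\overline{G}$-chief factor above $\overline{M}$ is $\mathfrak{F}$-central, is left as a plan rather than an argument. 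Your sketch invokes inner actions of quasinilpotent groups on their chief factors, Clifford theory, and the closure of $\mathfrak{F}$ under quotients and subdirect products, but it does not explain how a $\overline{G}$-chief factor lying \emph{above} $\overline{M}$ can actually be realised, together with its full $\overline{G}$-action and centraliser, as a subdirect product of homomorphic images of test groups attached to chief factors lying \emph{below} $\overline{M}$. That identification is precisely the substance of Skiba's theorem and is not mere bookkeeping. So the proposal is not wrong in outline, but as written it stops exactly where the real difficulty begins.
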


\begin{lemma}
	\label{p_group_unique_subgroup_order_p}
	(\cite[5.3.7]{KurzweilStellmacher}) Let $p$ be a prime number, and let $P$ be a $p$-group such that $P$ has a unique subgroup with order $p$. Then either $P$ is cyclic, or $p = 2$ and $P$ is a generalized quaternion group. 
\end{lemma}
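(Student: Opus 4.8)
The plan is to argue by induction on $|P|$. The cases $|P| \le p^2$ are immediate, since a $p$-group of order $1$, $p$ or $p^2$ with a unique subgroup of order $p$ must be cyclic (the group $C_p \times C_p$ has $p+1$ subgroups of order $p$). So assume $|P| > p^2$ and that the statement holds for all smaller $p$-groups.

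First I would record two consequences of the hypothesis. If $1 < H \le P$, then by Cauchy's theorem $H$ contains a subgroup of order $p$, which must coincide with the unique subgroup of order $p$ of $P$; hence $H$ again has a unique subgroup of order $p$. In particular every proper subgroup of $P$ is, by the induction hypothesis, cyclic or (when $p=2$) generalized quaternion, and every abelian subgroup of $P$ is cyclic (a finite abelian $p$-group with a unique subgroup of order $p$ is cyclic, by the structure theorem). If $P$ is abelian we are done, so assume $P$ is non-abelian. Then $P$ is non-cyclic, so it has at least $p+1 \ge 3$ maximal subgroups, each normal of index $p$ and, by the above, cyclic or generalized quaternion.

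Case $p$ odd. Then no maximal subgroup is generalized quaternion, so every maximal subgroup is cyclic. Fix one, $M = \langle a \rangle$, of order $p^{n-1}$ with $|P| = p^n$, and pick $b \in P \setminus M$. Conjugation by $b$ induces an automorphism of $M$ of order dividing $p$ (as $b^p \in M$ and $M$ is abelian), and it is non-trivial since $P$ is non-abelian; since $\mathrm{Aut}(C_{p^{n-1}})$ is cyclic this forces $n \ge 3$ and $bab^{-1} = a^{1+cp^{n-2}}$ with $p \nmid c$. A short computation then shows that $P$ has class $2$, that $P' = \langle a^{p^{n-2}} \rangle$ is the unique subgroup of order $p$, and that $(xy)^p = x^p y^p$ for all $x,y \in P$ (using $p \mid \binom{p}{2}$ and that $P'$ has exponent $p$). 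Now $\langle b \rangle$ can have neither order $p$ (it would be a second subgroup of order $p$) nor order $p^n$ (then $P$ is cyclic); writing $b^p = a^s$, the requirement that $\langle b \rangle$ contains the unique subgroup of order $p$ forces $p \mid s$, and then one may choose $i$ with $(a^i b)^p = a^{ip+s} = 1$, producing a subgroup of order $p$ outside $M$ — a contradiction. Hence for $p$ odd, $P$ is abelian, so cyclic. (Alternatively, one can invoke the classical fact that a non-cyclic $p$-group with $p$ odd contains a copy of $C_p \times C_p$, which is incompatible with the hypothesis.)

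Case $p = 2$. If $P$ has a cyclic maximal subgroup, then $P$ lies in the classical list of $2$-groups having a cyclic subgroup of index $2$ — the cyclic groups, $C_{2^{n-1}} \times C_2$, and the dihedral, semidihedral, modular and generalized quaternion groups of order $2^n$ — and among the non-abelian entries only the generalized quaternion group has a single involution, so $P \cong Q_{2^n}$. It remains to rule out the possibility that every maximal subgroup of $P$ is generalized quaternion, which I expect to be the main obstacle. Here one exploits that the proper subgroup $\Phi(P)$ is itself cyclic or generalized quaternion and chases the conjugation action of $P$ on $\Phi(P)$ (and, descending, on a suitable characteristic subgroup of $\Phi(P)$), together with the device of solving $(a^i b)^2 = 1$ used in the odd case; one finds that $P$ is then forced either to be cyclic or to possess a second involution, a contradiction in either case. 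This completes the induction.
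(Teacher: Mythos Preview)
The paper does not prove this lemma at all; it is simply quoted from Kurzweil--Stellmacher as a preliminary fact, so there is no argument in the paper to compare yours against.

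Your inductive setup and the odd-$p$ case are essentially correct, with one minor slip: the containment of the unique order-$p$ subgroup in $\langle b\rangle$ only tells you that $\langle a^{s}\rangle$ is nontrivial, not that $p\mid s$. The real reason $p\mid s$ holds is the fact you had already recorded one line earlier, namely $|b|\le p^{\,n-1}$ (otherwise $P=\langle b\rangle$ is cyclic), which gives $|a^{s}|=|b|/p\le p^{\,n-2}$ and hence $p\mid s$. With that fixed, the odd-$p$ argument goes through.

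The genuine gap is the $p=2$ subcase in which \emph{every} maximal subgroup of $P$ is generalized quaternion. You explicitly call this ``the main obstacle'' and then offer only a hand-wave (``one exploits\ldots chases\ldots one finds''), not a proof. The device you used for odd $p$ does not transfer: for $p=2$ the identity $(xy)^{p}=x^{p}y^{p}$ fails even in class-$2$ groups, since the correction term $[y,x]^{\binom{2}{2}}=[y,x]$ need not vanish, so you cannot manufacture a second involution via $(a^{i}b)^{2}=1$ in the same way. A complete argument is still needed here --- e.g.\ by taking a maximal abelian normal subgroup $A$ (necessarily cyclic), using $C_{P}(A)=A$ to embed $P/A$ in $\mathrm{Aut}(A)$, and then analysing the possible involutory automorphisms of a cyclic $2$-group to force $|P:A|=2$; or by intersecting two generalized-quaternion maximal subgroups and studying the index-$4$ normal subgroup so obtained. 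As written, this case is asserted rather than proved.
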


\begin{lemma}
	\label{lemma_second_maximal_subgroups} 
(\cite[Lemma 2.1]{Asaad1989}) Let $G$ be a group such that the trivial subgroup $1$ is a $2$-maximal subgroup of $G$ and such that $G$ has no non-trivial $2$-maximal subgroups. Then $|G| = pq$, where $p$ and $q$ are prime numbers (not necessarily distinct).
\end{lemma}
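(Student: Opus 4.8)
The plan is to translate the two hypotheses into a single statement about maximal subgroups and then run a short case distinction according to whether $G$ is a $p$-group. First, since $1$ is a $2$-maximal subgroup of $G$, there is a chain $1 < H_1 < G$ of subgroups of $G$; in particular $G$ is non-trivial and $|G|$ is not prime. Second, I would show that every maximal subgroup $M$ of $G$ has prime order. Indeed, $M \ne 1$ (otherwise $|G|$ would be prime), and if $|M|$ were not prime then $M$ would contain a non-trivial maximal subgroup $N$: one obtains such an $N$ by taking a Sylow subgroup of $M$ and, if it is a proper subgroup of $M$, enlarging it to a maximal subgroup of $M$, or, if it equals $M$, using that a $p$-group of non-prime order has a maximal subgroup of order at least $p$. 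But then $N$ would be a non-trivial $2$-maximal subgroup of $G$, contrary to the second hypothesis. Hence $|M|$ is prime.

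Now suppose first that $G$ is a $p$-group for some prime $p$. Since $G \ne 1$, $G$ has a maximal subgroup $M$, necessarily of index $p$; as $|M| = |G|/p$ is simultaneously a power of $p$ and a prime, we get $|G| = p^2$. So assume from now on that $G$ is not a $p$-group. Then every Sylow subgroup of $G$ has prime order: a Sylow $r$-subgroup $R$ is a proper subgroup of $G$, hence lies in some maximal subgroup $M$, and since $1 < |R|$ divides the prime $|M|$ we obtain $|R| = |M| = r$. In particular $|G|$ is squarefree and divisible by at least two primes.

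Finally, let $p$ be the smallest prime divisor of $|G|$. Its Sylow $p$-subgroups, being of order $p$, are cyclic, so Lemma \ref{Burnside_p_nilpotence} shows that $G$ is $p$-nilpotent; let $N$ be its normal $p$-complement, so that $[G:N] = p$. A subgroup of prime index is maximal, so $N$ is a maximal subgroup of $G$, whence $|N|$ is prime by the first paragraph, and therefore $|G| = p\,|N|$ is a product of two primes. Together with the $p$-group case this proves the claim, with $p = q$ occurring precisely when $G$ is a $p$-group. I expect the only mildly subtle point to be the argument in the first paragraph that a maximal subgroup of non-prime order must itself contain a non-trivial maximal subgroup; everything else is elementary counting, the single external input being Burnside's normal $p$-complement theorem (Lemma \ref{Burnside_p_nilpotence}), which is already at hand.
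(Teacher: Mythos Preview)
The paper does not supply its own proof of this lemma; it is quoted from \cite[Lemma 2.1]{Asaad1989} in the preliminaries and used as a black box. So there is no in-paper argument to compare against, and the relevant question is simply whether your proof is correct.

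It is. The reduction to ``every maximal subgroup of $G$ has prime order'' is immediate from the hypotheses, and your case split ($G$ a $p$-group versus not) handles both possibilities cleanly. Your appeal to Lemma~\ref{Burnside_p_nilpotence} to produce a normal subgroup of prime index is a perfectly legitimate way to finish the non-$p$-group case, and that lemma is indeed available in the paper. One small stylistic remark: in the first paragraph, the detour through Sylow subgroups to exhibit a non-trivial maximal subgroup of $M$ is unnecessary. Any finite group $M$ with $|M|>1$ has a maximal subgroup, and if that maximal subgroup were trivial then $M$ would have no proper non-trivial subgroups at all, forcing $|M|$ to be prime. So the contrapositive gives what you want in one line. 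This is exactly the point you flag as ``mildly subtle,'' but in fact it is the most routine step.
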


\begin{lemma}
\label{lemma_third_maximal_subgroups}
(\cite[Lemma 2.3]{Asaad1989}) Let $G$ be a group such that the trivial subgroup $1$ is a $3$-maximal subgroup of $G$ and such that $G$ has no non-trivial $3$-maximal subgroups. Then $|G|=pqr$, where $p$, $q$ and $r$ are prime numbers (not necessarily distinct).
\end{lemma}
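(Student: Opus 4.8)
The plan is to show that $|G|$ is a product of exactly three primes, counted with multiplicity. Since $1$ is a $3$-maximal subgroup of $G$, I would fix a chain $1 = H_0 < H_1 < H_2 < H_3 = G$ in which $H_i$ is maximal in $H_{i+1}$ for each $i$. As $1$ is maximal in $H_1$, the order of $H_1$ is prime, so $|G| = |H_1|\cdot|H_2:H_1|\cdot|G:H_2|$ is a product of three integers each at least $2$; hence $|G|$ is divisible by at least three primes. It then remains to bound the number of prime divisors of $|G|$ above by three.

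The first main step is a local fact: every maximal subgroup $M$ of $G$ has order a product of at most two primes. If $M$ had a non-trivial $2$-maximal subgroup, say $L < L' < M$ with each subgroup maximal in the next, then $L < L' < M < G$ would exhibit $L$ as a non-trivial $3$-maximal subgroup of $G$, contrary to hypothesis; so $M$ has no non-trivial $2$-maximal subgroup. Now if $|M|$ were a product of at least two primes, then $M$ would be non-trivial of non-prime order, hence would possess a non-trivial maximal subgroup $A$; any non-trivial maximal subgroup of $A$ would be a non-trivial $2$-maximal subgroup of $M$, so $A$ has no non-trivial maximal subgroup and therefore $|A|$ is prime. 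Then $1 < A < M$ shows that $1$ is a $2$-maximal subgroup of $M$, and since $M$ has no non-trivial $2$-maximal subgroup, Lemma~\ref{lemma_second_maximal_subgroups} yields $|M| = pq$ with $p,q$ prime. In every case $|M|$ is a product of at most two primes.

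Next I would prove that $G$ is solvable. By the local fact, every proper subgroup of $G$, being contained in a maximal subgroup, has order a product of at most two primes, and is therefore solvable; so if $G$ were not solvable it would be a non-solvable group all of whose proper subgroups are solvable. If $|G|$ were squarefree, $G$ would be solvable by the classical fact that groups of squarefree order are solvable, a contradiction. Otherwise $q^2 \mid |G|$ for some prime $q$; since a Sylow $q$-subgroup $P$ lies in a maximal subgroup whose order is a product of at most two primes, necessarily $|P| = q^2$ and that maximal subgroup equals $P$, so $P$ is itself maximal in $G$. If $P \trianglelefteq G$, then every maximal subgroup of $G/P$ is trivial (it corresponds to a maximal subgroup of $G$ containing $P$, which must then equal $P$), so $|G/P|$ is $1$ or a prime and $G$ is solvable --- a contradiction. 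Otherwise $N_G(P) = P$ by maximality; since $P$ is abelian this gives $N_G(P) = C_G(P)$, so Burnside's normal complement theorem furnishes a normal $q$-complement $K$, which is a proper (hence solvable) subgroup of $G$ with $G/K \cong P$ solvable, so once more $G$ is solvable --- a contradiction. Thus $G$ is solvable. (Alternatively one could invoke Thompson's classification of the minimal simple groups, each of which has a maximal subgroup of order divisible by at least three primes.)

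Finally, since $G$ is solvable and non-trivial, $G/G'$ is a non-trivial abelian group and so has a subgroup of prime index; pulling this back gives a normal --- hence maximal --- subgroup $M$ of $G$ of prime index. By the local fact $|M|$ is a product of at most two primes, so $|G| = |M|\cdot|G:M|$ is a product of at most three primes; combined with the first paragraph, $|G|$ is a product of exactly three primes, i.e.\ $|G| = pqr$ with $p,q,r$ prime. I expect the solvability step to be the main obstacle: ruling out a non-solvable $G$ genuinely requires a substantial input, namely either Burnside's normal complement theorem together with the structure of groups of squarefree order, or the classification of the minimal simple groups.
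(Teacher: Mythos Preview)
Your argument is correct. The paper itself does not prove this lemma; it is quoted verbatim from \cite[Lemma~2.3]{Asaad1989} and used as a black box, so there is no in-paper proof to compare against.

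A couple of minor remarks on presentation. In the first paragraph, ``divisible by at least three primes'' should be read as ``$\Omega(|G|)\ge 3$''; you do say ``counted with multiplicity'' at the outset, but it may be worth repeating to avoid any ambiguity. In the solvability step, the case $P\trianglelefteq G$ with $P$ maximal actually needs no further work: then $|G:P|$ is prime and $|G|=q^2r$ is already a product of three primes, so you could stop there rather than appeal to $p^aq^b$. Finally, the phrase ``normal --- hence maximal --- subgroup $M$ of $G$ of prime index'' is fine once one reads ``of prime index'' as the reason for maximality; you might reorder to ``a normal subgroup $M$ of prime index, which is therefore maximal'' to make the logic explicit.
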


\begin{lemma}
	\label{lemma_maximal_subgroups_solvable_groups} 
(\cite[Chapter 6, Theorem 1.5]{Gorenstein}) Let $G$ be a solvable group, and let $M$ be a maximal subgroup of $G$. Then $M$ has prime power index in $G$. 
\end{lemma}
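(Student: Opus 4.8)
The plan is to argue by induction on $|G|$. The statement is vacuous when $G = 1$, so assume $G \neq 1$ and choose a minimal normal subgroup $N$ of $G$. Here the solvability hypothesis enters in its only essential way: a minimal normal subgroup of a solvable group is an elementary abelian $p$-group, so $|N| = p^k$ for some prime $p$ and some $k \ge 1$. I would then split into two cases depending on the position of $N$ relative to $M$.

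If $N \le M$, then $M/N$ is a proper subgroup of $G/N$, and it is maximal there by the correspondence theorem. Since $G/N$ is again solvable and $|G/N| < |G|$, the induction hypothesis tells us that $[G/N : M/N]$ is a prime power; but $[G/N : M/N] = [G:M]$, so we are done in this case.

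If $N \not\le M$, then $M < MN \le G$, and maximality of $M$ forces $MN = G$. By the second isomorphism theorem,
\[
[G:M] = [MN:M] = [N : M \cap N],
\]
which divides $|N| = p^k$ and is therefore a power of $p$; it is a nontrivial power since $M < G$. This disposes of the second case without any appeal to the inductive hypothesis, and it is also what makes the induction terminate (it always applies, for instance, once $G$ itself is elementary abelian).

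I do not anticipate a genuine obstacle: the argument is a short two-case induction, and the only substantive input is the structure of minimal normal subgroups of solvable groups. It is worth recording that the result genuinely needs solvability --- the dihedral subgroup of order $10$ in $A_5$ is maximal of index $6$ --- which confirms that the step invoking the $p$-group structure of $N$ cannot be avoided.
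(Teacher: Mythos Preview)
Your argument is correct and is the standard proof; the paper itself gives no proof at all, merely citing \cite[Chapter 6, Theorem 1.5]{Gorenstein}, and the induction you outline is essentially Gorenstein's own argument. There is nothing further to compare.
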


\begin{lemma}
	\label{automorphisms_Q8}
(\cite[5.3.3]{KurzweilStellmacher}) $\mathrm{Aut}(Q_8)$ is isomorphic to the symmetric group $S_4$. 
\end{lemma}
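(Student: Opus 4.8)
The plan is to compute $|\mathrm{Aut}(Q_8)|$, to identify $\mathrm{Aut}(Q_8)$ as a split extension of $\mathrm{Inn}(Q_8) \cong C_2 \times C_2$ by $S_3$, and to recognise this extension as $S_4$. Throughout I would write $Q_8 = \{\pm 1, \pm i, \pm j, \pm k\}$ with $i^2 = j^2 = k^2 = -1$ and $ij = k$, and use the two standing facts that $Z(Q_8) = \{\pm 1\}$ is the unique subgroup of order $2$ (hence characteristic, so every automorphism fixes $-1$) and that the six elements of order $4$ are exactly $\pm i, \pm j, \pm k$, lying in the three cyclic maximal subgroups $\langle i\rangle, \langle j\rangle, \langle k\rangle$. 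First I would show $|\mathrm{Aut}(Q_8)| = 24$: using the presentation $Q_8 = \langle a, b \mid a^4 = 1,\ a^2 = b^2,\ b^{-1}ab = a^{-1}\rangle$, any map sending $i \mapsto x$ and $j \mapsto y$ with $x, y$ of order $4$ and $\langle x\rangle \neq \langle y\rangle$ respects the defining relations (one checks $x^2 = -1 = y^2$ and $y^{-1}xy = x^{-1}$ directly in $Q_8$), hence extends to an endomorphism, which is an automorphism since $\langle x, y\rangle = Q_8$; as there are $6$ choices for $x$ and then $4$ for $y$, this gives $|\mathrm{Aut}(Q_8)| = 24$.

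Next I would pin down the structure of $A := \mathrm{Aut}(Q_8)$. The group $A$ permutes $\{\langle i\rangle, \langle j\rangle, \langle k\rangle\}$, giving a homomorphism $\rho \colon A \to S_3$ whose kernel consists exactly of the automorphisms with $i \mapsto \pm i$ and $j \mapsto \pm j$, which are the four inner automorphisms; thus $\ker\rho = \mathrm{Inn}(Q_8) \cong Q_8/Z(Q_8) \cong C_2 \times C_2$. Exhibiting automorphisms that induce a $3$-cycle (for instance $i \mapsto j \mapsto k \mapsto i$) and a transposition (for instance $i \leftrightarrow j$, which forces $k \mapsto -k$) shows $\rho$ is surjective, so $A/\mathrm{Inn}(Q_8) = \mathrm{Out}(Q_8) \cong S_3$. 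Moreover, under the identification $\mathrm{Inn}(Q_8) \cong Q_8/Z(Q_8)$ the conjugation action of $A$ on $\mathrm{Inn}(Q_8)$ is the action of $A$ on $Q_8/Z(Q_8)$; it factors through $\rho$ as the natural action of $S_3 \cong \mathrm{Aut}(C_2 \times C_2)$, and in particular $\mathrm{Inn}(Q_8)$ is self-centralising in $A$.

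To finish, I would split the extension $1 \to \mathrm{Inn}(Q_8) \to A \to S_3 \to 1$ by exhibiting a complement, e.g.\ $\langle \alpha, \tau\rangle$ with $\alpha \colon i \mapsto j \mapsto k \mapsto i$ and $\tau \colon i \mapsto -j,\ j \mapsto -i,\ k \mapsto -k$: a direct check gives $\tau^2 = 1$ and $\tau\alpha\tau = \alpha^{-1}$, and since $\rho$ maps $\langle\alpha,\tau\rangle$ onto $S_3$ it is injective there, so $\langle\alpha,\tau\rangle \cong S_3$ and $\langle\alpha,\tau\rangle \cap \ker\rho = 1$. Hence $A \cong (C_2 \times C_2) \rtimes S_3$ with the natural action; since a semidirect product is determined by its action, comparing with $S_4 = V \rtimes S_{\{1,2,3\}}$, where $V = \{\mathrm{id},(12)(34),(13)(24),(14)(23)\}$ is the normal Klein four-subgroup and the point stabiliser $S_{\{1,2,3\}}$ acts on $V$ as all of $\mathrm{Aut}(V) \cong S_3$, yields $\mathrm{Aut}(Q_8) \cong S_4$. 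The part I expect to be most delicate is the explicit bookkeeping --- verifying that the $24$ candidate maps really are automorphisms, and that $\alpha, \tau$ generate a genuine complement to $\mathrm{Inn}(Q_8)$. Alternatively one can avoid the explicit complement by a counting argument: $A$ has precisely four Sylow $3$-subgroups, and the conjugation map $A \to S_4$ on them is injective, because its kernel meets every Sylow $3$-subgroup trivially and has $S_4$-transitive quotient, so it has order at most $2$, hence is central and lies in the self-centralising $\mathrm{Inn}(Q_8)$, on which $\mathrm{Out}(Q_8) \cong S_3$ acts without nonzero fixed points --- so the kernel is trivial.
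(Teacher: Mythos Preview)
Your argument is correct. The count $|\mathrm{Aut}(Q_8)|=24$ via the presentation, the identification of $\ker\rho=\mathrm{Inn}(Q_8)\cong C_2\times C_2$, the explicit complement $\langle\alpha,\tau\rangle\cong S_3$, and the recognition of the resulting split extension as $S_4$ all check out; your alternative Sylow-counting route is also valid once one notes that the image of $A$ in $S_4$ contains elements of order~$3$ (each Sylow $3$-subgroup fixes itself), forcing $|\mathrm{image}|\ge 12$ and hence $|\ker|\le 2$.

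However, there is nothing to compare against: the paper does not prove this lemma at all. It is stated purely as a citation of \cite[5.3.3]{KurzweilStellmacher} and used as a black box (only the consequence $|\mathrm{Aut}(Q_8)|=24$ is actually needed later, in step~(3) of the proof of Theorem~\ref{theorem_third_maximal_subgroups}). So your write-up supplies a self-contained proof where the paper is content with a reference; for the paper's purposes, even the first paragraph of your argument (establishing $|\mathrm{Aut}(Q_8)|=24$) would already suffice.
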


\begin{lemma}
	\label{characterization_SL_2_3} 
(\cite[8.6.10]{KurzweilStellmacher}) Let $G$ be a $2$-closed group with order $24$ such that the Sylow $2$-subgroup of $G$ is isomorphic to $Q_8$. Then either $G \cong Q_8 \times C_3$ or $G \cong SL_2(3)$.  
\end{lemma}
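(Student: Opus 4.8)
The plan is to observe that $G$ splits over its normal Sylow $2$-subgroup, classify the possible actions of the complement, and then identify the non-trivial case with $SL_2(3)$. Since $G$ is $2$-closed, its Sylow $2$-subgroup $P$ is normal in $G$, and $P \cong Q_8$ by hypothesis. Pick a Sylow $3$-subgroup $H$ of $G$; then $|H| = 3$, and since $P \cap H = 1$ and $|PH| = 24 = |G|$, we have $G = P \rtimes H$. Conjugation inside $G$ induces a homomorphism $\phi \colon H \to \mathrm{Aut}(P)$, and $\mathrm{Aut}(P) \cong S_4$ by Lemma \ref{automorphisms_Q8}. As $|H| = 3$ is prime, $\ker \phi$ has order $1$ or $3$, so either $\phi$ is trivial or $\phi$ is injective.

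If $\phi$ is trivial, then $P$ and $H$ centralise each other, so $G = P \times H \cong Q_8 \times C_3$. Now suppose $\phi$ is injective. Then $\phi(H)$ is a subgroup of order $3$, hence a Sylow $3$-subgroup, of $\mathrm{Aut}(P) \cong S_4$, and by Sylow's theorem all such subgroups are conjugate in $\mathrm{Aut}(P)$. A routine argument on semidirect products then shows that, up to isomorphism, there is at most one group of the form $P \rtimes H$ arising from an injective homomorphism $H \to \mathrm{Aut}(P)$: given two such homomorphisms $\phi, \psi$, choosing $\theta \in \mathrm{Aut}(P)$ with $\theta\, \phi(H)\, \theta^{-1} = \psi(H)$ and replacing $\phi$ by $h \mapsto \theta\,\phi(h)\,\theta^{-1}$ (which changes $P \rtimes_\phi H$ only up to isomorphism) yields a homomorphism with the same image as $\psi$; this homomorphism differs from $\psi$ only by an automorphism of $H$, which is exactly the condition producing an isomorphism $P \rtimes_\phi H \cong P \rtimes_\psi H$.

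It remains to check that $SL_2(3)$ is a group of this second type. The matrix $-I$ is the unique involution of $SL_2(3)$: any $A \in SL_2(3)$ with $A^2 = I$ is diagonalisable over $\mathbb{F}_3$ with eigenvalues in $\{1,-1\}$, and the case of distinct eigenvalues is excluded by $\det A = 1$. Hence a Sylow $2$-subgroup $S$ of $SL_2(3)$ has a unique subgroup of order $2$, so by Lemma \ref{p_group_unique_subgroup_order_p} $S$ is cyclic or generalized quaternion. Since $SL_2(3)/Z(SL_2(3))$ has order $12$ and is isomorphic to $A_4$, which has four Sylow $3$-subgroups, $SL_2(3)$ itself has four Sylow $3$-subgroups; in particular it has no normal subgroup of order $3$ and so is not $2$-nilpotent, whence $S$ is not cyclic by Lemma \ref{Burnside_p_nilpotence} (applied with $p = 2$, the smallest prime divisor of $24$). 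Thus $|S| = 8$ forces $S \cong Q_8$. The image of $S$ in $A_4$ is the unique, normal, Klein four subgroup of $A_4$, so $S$ is normal in $SL_2(3)$ and $SL_2(3)$ is $2$-closed. Finally the action of $SL_2(3)/S \cong C_3$ on $S \cong Q_8$ is non-trivial, since otherwise $SL_2(3) \cong Q_8 \times C_3$ would be nilpotent, contradicting that it has four Sylow $3$-subgroups. Hence $SL_2(3)$ is of the form described in the previous paragraph, and therefore $G \cong SL_2(3)$ in the remaining case.

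The only point that requires genuine care is the uniqueness-of-semidirect-product argument in the second paragraph (making precise that conjugacy of the images in $\mathrm{Aut}(P)$, together with passing to an automorphism of $H$, really yields an isomorphism of the two semidirect products); all remaining steps reduce to elementary Sylow theory, the structure of $A_4$, and the cited Lemmas \ref{automorphisms_Q8}, \ref{p_group_unique_subgroup_order_p} and \ref{Burnside_p_nilpotence}.
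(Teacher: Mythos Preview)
Your proof is correct. The paper does not actually prove this lemma: it is stated with a citation to \cite[8.6.10]{KurzweilStellmacher} and used as a black box, so there is no in-paper argument to compare against. What you have written is a clean, self-contained proof along the standard lines (Schur--Zassenhaus splitting, classification of the $C_3$-action on $Q_8$ up to conjugacy in $\mathrm{Aut}(Q_8)\cong S_4$, and identification of the nontrivial case with $SL_2(3)$), and it correctly invokes only lemmas already available in the paper. The one place you flag as needing care---that conjugate images in $\mathrm{Aut}(P)$ together with precomposition by an automorphism of $H$ yield isomorphic semidirect products---is a genuine (but standard) fact, and your sketch of it is accurate.
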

		
\section{Proof of Theorem \ref{theorem_Q8_free_groups}} 
\begin{proof}[Proof of Theorem \ref{theorem_Q8_free_groups}]
Suppose that the theorem is false, and let $G$ be a minimal counterexample. 

Let $L$ be a proper subgroup of $G$. Then $L$ is $Q_8$-free since $G$ is $Q_8$-free. Also, by hypothesis, any subgroup of $L$ with order $2$ is an $IC\Phi$-subgroup of $G$. Lemma \ref{2.1} (1) implies that any subgroup of $L$ with order $2$ is an $IC\Phi$-subgroup of $L$. Consequently, $L$ satisfies the hypotheses of the theorem, and so $L$ is $2$-nilpotent by the minimality of $G$. It follows that $G$ is a minimal non-$2$-nilpotent group. 

Let $P \in \mathrm{Syl}_2(G)$. Then $P \ne 1$. By hypothesis, any subgroup of $P$ with order $2$ is an $IC\Phi$-subgroup of $G$. By Lemma \ref{Q8_free_minimal_non_2_nilpotent}, $P$ is elementary abelian. In particular, $P$ has no cyclic subgroup with order $4$. Applying Theorem \ref{theorem_first_paper} or \cite[Theorem 3.1]{GaoLi2021}, we conclude that $G$ is $2$-nilpotent. This contradiction completes the proof. 
\end{proof}

\section{Proofs of Theorems \ref{theorem_formations_1} and \ref{theorem_formations_2}} 

\begin{lemma}
\label{lemma_hypercentre}
Let $p$ be a prime number, let $G$ be a group, and let $P$ be a non-trivial normal $p$-subgroup of $G$. Suppose that there is a subgroup $D$ of $P$ with $1 < |D| \le |P|$ such that any subgroup of $P$ with order $|D|$ is an $IC\Phi$-subgroup of $G$. If $p = 2$, $|D| = 2$ and $P$ is not $Q_8$-free, assume moreover that any cyclic subgroup of $P$ with order $4$ is an $IC\Phi$-subgroup of $G$. Then $P \le Z_{\infty}(G)$. 
\end{lemma}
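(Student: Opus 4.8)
The plan is to argue by contradiction: assume the lemma fails and let $G$ (with the data $p$, $P$, $D$) be a counterexample of least order. Since $P \not\le Z_\infty(G)$, Lemma \ref{lemma_hypercenter} shows that $G/C_G(P)$ is not a $p$-group; so there is a $p'$-element $x \in G \setminus C_G(P)$ (take the $p'$-part of a preimage of an element of prime order $\ne p$ in $G/C_G(P)$). For proper subgroups I will use repeatedly the standard fact that $P \le Z_\infty(K)$ forces $K/C_K(P)$ to be a $p$-group (a stability group of a normal series of a $p$-group is a $p$-group).

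First I would run the usual reductions. The hypotheses are inherited by $K := P\langle x\rangle$ via Lemma \ref{2.1}(1) (the $Q_8$-freeness of $P$ being intrinsic), so if $K < G$ then minimality gives $P \le Z_\infty(K)$, whence $x \in C_K(P)$, a contradiction; thus $G = P\langle x\rangle$. As $\langle x\rangle$ is cyclic and $O_{p'}(G)$ is a normal $p'$-subgroup, $\langle x\rangle O_{p'}(G)$ is a $p'$-group meeting $P$ trivially, hence embeds into $G/P \cong \langle x\rangle$ and so equals $\langle x\rangle$; thus $O_{p'}(G) \le \langle x\rangle$, and since $O_{p'}(G)$ centralizes $P$ as well, $O_{p'}(G) \le Z(G)$. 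The hypotheses descend to $G/O_{p'}(G)$ by Lemma \ref{2.1}(3), so minimality lets me assume $O_{p'}(G) = 1$; a similar normal-closure argument then gives $C_G(P) = Z(P)$, so $P$ is the Sylow $p$-subgroup of $G$ and $\langle x\rangle$ acts faithfully and nontrivially on $P$.

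Next put $U := [P,\langle x\rangle]$. Coprime action gives $U \ne 1$, $U \trianglelefteq G$, $[U,\langle x\rangle] = U$, and that $\langle x\rangle$ centralizes $P/U$; also $U \not\le \Phi(P)$, for otherwise $\langle x\rangle$ would centralize $P/\Phi(P)$ and hence $P$. It now suffices to produce a subgroup $H \le P$ with $|H| = |D|$ and $H \cap [H,G] \not\le \Phi(H)$, contradicting the hypothesis. If $|D| \ge |U|$, choose $H$ with $U \le H \le P$ and $|H| = |D|$. Since $\langle x\rangle$ fixes $P/U$ pointwise, $H$ is $\langle x\rangle$-invariant, so $U = [U,\langle x\rangle] \le [H,\langle x\rangle] \le H \cap [H,G]$; and picking a maximal subgroup $M$ of $P$ with $U \not\le M$ (possible as $U \not\le \Phi(P)$), the subgroup $M \cap H$ is maximal in $H$ and misses $U$, so $U \not\le \Phi(H)$. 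Thus $H$ is not an $IC\Phi$-subgroup of $G$, as required.

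The case $|D| < |U|$ is where the real work lies, and I expect it to be the main obstacle. Here I would bring in a minimal normal subgroup $N$ of $G$ with $N \le P$. Since $N$ is an irreducible $\mathbb{F}_p G$-module and every order-$|D|$ subgroup of $N$ is elementary abelian, the $IC\Phi$-condition forces $|N| \le |D|$ (otherwise such a subgroup $H \le N$ would give $N = H \oplus [H,G]$ as $\mathbb{F}_p G$-modules, against irreducibility). If $\langle x\rangle$ centralizes $N$, then $N \le Z(G)$, and when moreover $|N| < |D|$ the hypotheses descend to $(G/N, P/N)$ by Lemma \ref{2.1}(2), so minimality yields $P/N \le Z_\infty(G/N)$ and hence $P \le Z_\infty(G)$, a contradiction. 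If $\langle x\rangle$ acts nontrivially on $N$, then $[N,G] = N$, and extending $N$ to a subgroup $H$ of order $|D|$ in $P$ yields a non-$IC\Phi$-subgroup once $N \not\le \Phi(H)$; arranging this, and handling the residual case in which every minimal normal subgroup of $G$ inside $P$ is central of order exactly $|D|$, forces one to argue directly inside $U$ (for instance, when $U$ is abelian $\langle x\rangle$ is fixed-point-free on $U$, so any order-$|D|$ subgroup $H \le U$ satisfies $H \le [H,\langle x\rangle] = H \cap [H,G]$) and, when $p = 2$ and $P$ is not $Q_8$-free, to invoke the extra hypothesis on cyclic subgroups of $P$ of order $4$. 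The delicate point is that passing to a quotient changes the order $|D|$ of the subgroups one controls, so the reduction must be organized carefully around $\Phi(P)$, the chief series of $G$ inside $P$, and the $Q_8$-free exceptional clause.
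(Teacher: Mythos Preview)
Your plan misses the key shortcut the paper uses, and as a result the ``real work'' you anticipate in the case $|D| < |U|$ is entirely unnecessary. The paper's proof is only a few lines: for each prime $q \ne p$ dividing $|G|$ and each $Q \in \mathrm{Syl}_q(G)$, set $H := PQ$; since $P \trianglelefteq G$ this is a subgroup with $P \in \mathrm{Syl}_p(H)$, the $IC\Phi$-hypotheses pass to $H$ by Lemma~\ref{2.1}(1), and then Theorem~\ref{theorem_first_paper} (together with Theorem~\ref{theorem_Q8_free_groups} in the exceptional $Q_8$-free case) gives that $H$ is $p$-nilpotent, so $H = P \times Q$ and $Q \le C_G(P)$. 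Hence $O^p(G) \le C_G(P)$, so $G/C_G(P)$ is a $p$-group and Lemma~\ref{lemma_hypercenter} finishes.

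Your approach never invokes Theorems~\ref{theorem_first_paper} and~\ref{theorem_Q8_free_groups}, and instead tries to reprove their content by a direct analysis of $U = [P,\langle x\rangle]$ and the chief factors of $P$. Even granting your reductions to $G = P\langle x\rangle$ with $O_{p'}(G) = 1$, you have arranged precisely that $P$ is the Sylow $p$-subgroup of $G$, at which point those two theorems apply verbatim and force $G$ to be $p$-nilpotent, contradicting $x \notin C_G(P)$. So the coprime-action case split on $|D|$ versus $|U|$, the minimal-normal-subgroup analysis, and the delicate bookkeeping around $\Phi(P)$ you describe are all superfluous; the incomplete case $|D| < |U|$ is exactly the hard content of Theorem~\ref{theorem_first_paper}, and there is no reason to redo it here.
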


\begin{proof}
Let $q$ be a prime divisor of $|G|$ with $q \ne p$, and let $Q$ be a Sylow $q$-subgroup of $G$. Set $H := PQ$. We have $H \le G$ since $P$ is normal in $G$. Note that $P$ is a Sylow $p$-subgroup of $H$. By Lemma \ref{2.1} (1), any subgroup of $P$ with order $|D|$ is an $IC\Phi$-subgroup of $H$. Also, if $p = 2$, $|D| = 2$ and $P$ is not $Q_8$-free, we have that any cyclic subgroup of $P$ with order $4$ is an $IC\Phi$-subgroup of $H$. Theorems \ref{theorem_first_paper} and \ref{theorem_Q8_free_groups} imply that $H$ is $p$-nilpotent. This implies that $H = P \times Q$, and so we have $Q \le C_G(P)$. Since $q$ was arbitrarily chosen, it follows that $O^p(G) \le C_G(P)$. Hence, $G/C_G(P)$ is a $p$-group. Lemma \ref{lemma_hypercenter} implies that $P \le Z_{\infty}(G)$. 
\end{proof}

\begin{proof}[Proof of Theorem \ref{theorem_formations_1}]
Suppose that the theorem is false, and let $(G,E)$ be a counterexample such that $|G| + |E|$ is minimal. 

Set $p := p_1$. We claim that $E$ is $p$-nilpotent. This follows from Lemma \ref{Burnside_p_nilpotence} when $P_1$ is cyclic. Assume now that $P_1$ is not cyclic. Then $P_1$ has a subgroup $D_1$ with $1 < |D_1| \le |P_1|$ such that any subgroup of $P_1$ with order $|D_1|$ is an $IC\Phi$-subgroup of $G$. Also, if $p = 2$, $|D_1| = 2$ and $P_1$ is not $Q_8$-free, then any cyclic subgroup of $P_1$ with order $4$ is an $IC\Phi$-subgroup of $G$. Applying Lemma \ref{2.1} (1), Theorem \ref{theorem_first_paper} and Theorem \ref{theorem_Q8_free_groups}, we conclude that $E$ is $p$-nilpotent, as claimed. 

Assume that $O_{p'}(E) \ne 1$. From Lemma \ref{2.1} (3), we see that $(G/O_{p'}(E),E/O_{p'}(E))$ satisfies the hypotheses of the theorem. So we have $G/O_{p'}(E) \in \mathfrak{F}$ by the minimality of $(G,E)$. Therefore, $(G,O_{p'}(E))$ also satisfies the hypotheses of the theorem. The minimality of $(G,E)$ implies that $G \in \mathfrak{F}$. This is a contradiction, and so we have $O_{p'}(E) = 1$. 

We show now that $E \le Z_{\mathfrak{U}}(G)$. Since $E$ is $p$-nilpotent and since $O_{p'}(E) = 1$, we have that $E = P_1$. If $P_1$ is cyclic, then it follows that $E = P_1 \le Z_{\mathfrak{U}}(G)$. If $P_1$ is not cyclic, then the hypotheses of the theorem and Lemma \ref{lemma_hypercentre} imply that $E = P_1 \le Z_{\infty}(G)$ and thus $E \le Z_{\mathfrak{U}}(G)$. 

Now Lemma \ref{lemma_formations_containing_U} implies that $G \in \mathfrak{F}$. This contradiction completes the proof. 
\end{proof}

\begin{proof}[Proof of Theorem \ref{theorem_formations_2}]
Arguing as at the beginning of the proof of Theorem \ref{theorem_formations_1}, we see that $F^{*}(E)$ is $p_1$-nilpotent. With $N_1 := O_{(p_1)'}(F^{*}(E))$, we thus have $F^{*}(E)/N_1 \cong P_1$. 

Assume that $t > 1$. Then $P_2 \in \mathrm{Syl}_{p_2}(N_1)$, and again we can argue as at the beginning of the proof of Theorem \ref{theorem_formations_1} to see that $N_1$ is $p_2$-nilpotent. With $N_2 := O_{(p_2)'}(N_1)$, we thus have $N_1/N_2 \cong P_2$.  

Repeating this argumentation, we see that $G$ has a Sylow tower of supersolvable type, i.e. there is a chain $F^{*}(E) = N_0 > N_1 > \dots > N_t = 1$ of normal subgroups of $F^{*}(E)$ such that $N_{i-1}/N_i \cong P_i$ for all $1 \le i \le t$. It follows that $F^{*}(E)$ is solvable.

As is well-known, $F^{*}(E)$ is generated by $F(E)$ together with the components of $E$. Every component of $E$ is a non-solvable subgroup of $E$. Since $F^{*}(E)$ is solvable, it follows that $E$ does not possess any components. Consequently, we have $F^{*}(E) = F(E)$. 

Let $1 \le i \le t$. Since $F(E)$ is nilpotent and $P_i \in \mathrm{Syl}_{p_i}(F(E))$, we have that $P_i$ is characteristic in $F(E)$. As $F(E) \trianglelefteq G$, it follows that $P_i \trianglelefteq G$. If $P_i$ is cyclic, then $P_i \le Z_{\mathfrak{U}}(G)$. If $P_i$ is not cyclic, then the hypotheses of the theorem and Lemma \ref{lemma_hypercentre} imply that $P_i \le Z_{\infty}(G)$ and hence $P_i \le Z_{\mathfrak{U}}(G)$. Since $i$ was arbitrarily chosen, it follows that $F(E) \le Z_{\mathfrak{U}}(G)$. 

Applying Lemmas \ref{lemma_formations_generalized_fitting} and \ref{lemma_formations_containing_U}, we conclude that $G \in \mathfrak{F}$. 
\end{proof} 

\section*{Proof of Theorem \ref{characterization_abelian_groups}}
		\begin{lemma}
			\label{lemma_characterization_abelian_groups}
			Let $p$ be a prime number, and let $P$ be a $p$-group such that any subgroup of $P$ is an $IC\Phi$-subgroup of $P$. If $p = 2$, assume moreover that $P$ is $Q_8$-free. Then $P$ is abelian. 
		\end{lemma}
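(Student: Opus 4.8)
The plan is to induct on $|P|$, using the hypothesis that every subgroup of $P$ is an $IC\Phi$-subgroup together with the $Q_8$-freeness assumption when $p=2$. First I would record the base case: if $|P| \le p$ then $P$ is cyclic, hence abelian. For the inductive step, I would first argue that every proper subgroup and every proper quotient of $P$ inherits the hypotheses of the lemma. For a subgroup $Q < P$, Lemma \ref{2.1}(1) shows any subgroup of $Q$ is an $IC\Phi$-subgroup of $Q$, and $Q$ is $Q_8$-free if $P$ is; so by induction $Q$ is abelian. For a quotient $P/N$, Lemma \ref{2.1}(2) shows any subgroup of $P/N$ (being of the form $H/N$ with $N \le H \le P$) is an $IC\Phi$-subgroup of $P/N$, and $P/N$ is $Q_8$-free since it is a section of $P$; so by induction $P/N$ is abelian. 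Thus every proper section of $P$ is abelian — i.e.\ $P$ is a minimal non-abelian $p$-group if it is not already abelian.

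Assuming for contradiction that $P$ is non-abelian, I would then exploit this minimal-non-abelian structure. Since every proper subgroup is abelian, $P$ is minimal non-nilpotent in the degenerate sense, but more usefully: a minimal non-abelian $p$-group $P$ has $|P'| = p$, $P' = \Phi(P) \cap Z(P)$, and $P/\Phi(P) \cong C_p \times C_p$; in particular $P' \le Z(P)$ and $P' \le \Phi(P)$. Now I would apply the $IC\Phi$-hypothesis to a well-chosen subgroup. The natural candidate is $H = P'$ itself, or a subgroup $H$ containing $P'$: since $P' = [P,P] \trianglelefteq P$, for any $H$ with $P' \le H \le P$ we have $[H,P] \le [P,P] = P' \le H$, so $H \cap [H,P] = [H,P]$, and the $IC\Phi$-condition forces $[H,P] \le \Phi(H)$. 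Applying this with $H = P'$ gives $[P',P] \le \Phi(P')$; but $P'$ has order $p$, so $\Phi(P') = 1$ and hence $P' \le Z(P)$ (which we already knew). The real leverage should come from Lemma \ref{ICPhi_nilpotent_2}-type reasoning or from choosing $H$ to be a maximal subgroup: pick a maximal subgroup $M$ of $P$ with $P' \le M$ (possible since $P' \le \Phi(P)$), so $M \trianglelefteq P$ and $M$ is abelian by induction; then $[M,P] \le P' \le M$ gives $[M,P] \le \Phi(M)$.

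The main obstacle — and the crux of the argument — is converting "$[M,P] \le \Phi(M)$ for the relevant subgroups $M$" into a contradiction with non-abelianness, and this is presumably where the $Q_8$-free hypothesis enters decisively. I would try to show that the hypotheses force $P$ to have a unique subgroup of order $p$, or force $P/\Phi(P)$ to be too small: if $P$ is non-abelian minimal non-abelian, it contains two distinct maximal subgroups $M_1, M_2$, each abelian and normal, with $P = M_1M_2$ and $M_1 \cap M_2 \le Z(P)$; then for a generator pattern one computes $P' = [M_1, M_2]$ up to the abelian pieces, and the conditions $[M_i,P] \le \Phi(M_i)$ should pin down $\Phi(M_i)$ and ultimately show $P'= 1$. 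The $Q_8$ case must be excluded by hand because in $Q_8$ every subgroup genuinely is an $IC\Phi$-subgroup (as the paper notes) yet $Q_8$ is non-abelian; so the combinatorial argument must at exactly one point use that $P \not\cong Q_8$ and that no section is $Q_8$ — most likely by invoking that a non-abelian $Q_8$-free $2$-group has a characteristic (hence, after choosing it maximal, normal) maximal subgroup, as used in the proof of Lemma \ref{Q8_free_minimal_non_2_nilpotent}, and combining that with the $IC\Phi$-forced relation to collapse $P'$. I expect the endgame to be a short direct computation once the right maximal subgroup $M$ is fixed and the relation $[M,P] \le \Phi(M) \le \Phi(P)$, together with $P/\Phi(P)$ elementary abelian of rank $2$, is in hand.
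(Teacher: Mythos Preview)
Your inductive framework is correct and matches the paper: reduce to a minimal counterexample, observe that every proper quotient is abelian (so $P'$ is the unique minimal normal subgroup and $|P'|=p$), and then try to show $P$ has a unique subgroup of order $p$ so that Lemma~\ref{p_group_unique_subgroup_order_p} applies. You even name this last goal explicitly. The gap is in the execution of that step.

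Your concrete plan is to extract a contradiction from the relation $[M,P]\le\Phi(M)$ for maximal subgroups $M$ containing $P'$. This does not work. Consider, for odd $p$ and $n\ge 4$, the modular group
\[
M_{p^n}=\langle a,b\mid a^{p^{n-1}}=b^{p}=1,\ b^{-1}ab=a^{1+p^{n-2}}\rangle,
\]
which is minimal non-abelian with $P'=\langle a^{p^{n-2}}\rangle$. Its maximal subgroups are the cyclic groups $\langle a\rangle,\langle ab\rangle,\dots,\langle ab^{p-1}\rangle$ and the abelian group $\langle a^{p},b\rangle$. For each cyclic maximal $M$ one has $\Phi(M)=M^{p}\ge P'$, and for $M=\langle a^{p},b\rangle$ one has $\Phi(M)=\langle a^{p^{2}}\rangle\ge P'$ because $n\ge 4$. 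In every case $[M,P]\le P'\le\Phi(M)$, so all maximal subgroups are $IC\Phi$-subgroups even though $M_{p^n}$ is non-abelian. Thus the maximal-subgroup computation you sketch cannot, by itself, produce a contradiction. (Incidentally, the identity $P'=\Phi(P)\cap Z(P)$ you quote fails in this same example.)

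The paper instead applies the $IC\Phi$ hypothesis to a \emph{small} subgroup. Suppose $Q$ is a subgroup of order $p$ with $Q\ne P'$, and set $H=P'Q$, an elementary abelian group of order $p^{2}$ with $\Phi(H)=1$. Since $P'$ is the unique minimal normal subgroup, $Q$ is not normal, hence $Q\not\le Z(P)$ and $[H,P]\ne 1$; as $[H,P]\trianglelefteq P$ and $[H,P]\le P'$, this forces $[H,P]=P'$. Then the $IC\Phi$ condition gives $P'=H\cap[H,P]\le\Phi(H)=1$, a contradiction. Hence $P'$ is the only subgroup of order $p$, Lemma~\ref{p_group_unique_subgroup_order_p} makes $P$ cyclic or generalized quaternion, and the $Q_8$-free hypothesis eliminates the latter. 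That single choice of $H$ is the missing idea in your proposal.
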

	
	\begin{proof}
		Suppose that the lemma is false, and let $P$ be a minimal counterexample. We will derive a contradiction in three steps. 
		
		\medskip
		
		(1) $P'$ is minimal normal in $P$, and there are no minimal normal subgroups of $P$ other than $P'$.
		
		Clearly $P \ne 1$, and so $P$ has a minimal normal subgroup, say $N$. We show that $N = P'$. 
		
		Let $N \le H \le P$. By hypothesis, $H$ is an $IC\Phi$-subgroup of $P$. Lemma \ref{2.1} (2) shows that $H/N$ is an $IC\Phi$-subgroup of $P/N$. Since $H$ was arbitrarily chosen, it follows that any subgroup of $P/N$ is an $IC\Phi$-subgroup of $P/N$. Also, if $p = 2$, then $P/N$ is $Q_8$-free since $P$ is $Q_8$-free. Therefore, $P/N$ satisfies the hypotheses of the lemma, and so $P/N$ is abelian by the minimality of $P$. 
		
		It follows that $P' \le N$. Noticing that $P' \ne 1$ since $P$ is not abelian, we conclude that $N = P'$, as required.
		
		\medskip
		
		(2) We have $|P'| = p$, and there is no subgroup of $P$ with order $p$ other than $P'$.

		We have $|P'| = p$ since $P'$ is minimal normal in $P$. Assume that there is a subgroup $Q$ of $P$ with $|Q| = p$ and $Q \ne P'$. Set $H := P'Q \le P$. Note that $\Phi(H) = 1$.  
		
		By (1), $Q$ is not normal in $P$. So we have $Q \not\le Z(P)$ and hence $H \not\le Z(P)$. Thus $[H,P] \ne 1$. Clearly $[H,P] \trianglelefteq P$ and $[H,P] \le P'$. So we have $[H,P] = P'$ by (1). 
		
		By hypothesis, $H$ is an $IC\Phi$-subgroup of $P$. It follows that $P' = H \cap P' = H \cap [H,P] \le \Phi(H) = 1$. This is a contradiction, and so there is no subgroup of $P$ with order $p$ other than $P'$. 
		
		\medskip
		
		(3) The final contradiction. 
		
		By (2), $P$ has precisely one subgroup with order $p$. Moreover, $P$ cannot be a generalized quaternion group since $P$ is $Q_8$-free by hypothesis. Lemma \ref{p_group_unique_subgroup_order_p} implies that $P$ is cyclic and hence abelian. This final contradiction completes the proof. 
		\end{proof}
	
	\begin{proof}[Proof of Theorem \ref{characterization_abelian_groups}]
		(1) $\Rightarrow$ (2): Suppose that $G$ is abelian. Then any section of $G$ is abelian. In particular, $G$ is $Q_8$-free. Also, if $H$ is a subgroup of $G$, then $H \cap [H,G] = H \cap 1 = 1 \le \Phi(H)$, so that $H$ is an $IC\Phi$-subgroup of $G$. Thus (2) holds. 
		
		(2) $\Rightarrow$ (3): Clear. 
		
		(3) $\Rightarrow$ (1): Suppose that $G$ is $Q_8$-free and that any primary subgroup of $G$ is an $IC\Phi$-subgroup of $G$. If $G = 1$, then there is nothing to show. Thus we assume that $G \ne 1$. Set $t := |\pi(G)|$, and let $p_1,\dots,p_t$ be the distinct prime divisors of $|G|$. For each $1 \le i \le t$, let $P_i$ be a Sylow $p_i$-subgroup of $G$. 
		
		Let $1 \le i \le t$. Since any primary subgroup of $G$ is an $IC\Phi$-subgroup of $G$, we have that $P_i$ is an $IC\Phi$-subgroup of $G$. Theorem \ref{theorem_first_paper} implies that $G$ is $p_i$-nilpotent. Since $i$ was arbitrarily chosen, we have that $G$ is $p$-nilpotent for any prime divisor $p$ of $|G|$. Consequently, $G$ is nilpotent, and so we have $G = P_1 \times \dots \times P_t$. 
		
		Let $1 \le i \le t$. Then any subgroup of $P_i$ is an $IC\Phi$-subgroup of $G$. Lemma \ref{2.1} (1) implies that any subgroup of $P_i$ is an $IC\Phi$-subgroup of $P_i$. Also, $P_i$ is $Q_8$-free since $G$ is $Q_8$-free. Lemma \ref{lemma_characterization_abelian_groups} implies that $P_i$ is abelian. 
		
		Consequently, $G$ is a direct product of abelian groups, and so $G$ is abelian as well. 
	\end{proof}

\section*{Proofs of Theorems \ref{theorem_maximal_subgroups}, \ref{theorem_second_maximal_subgroups} and \ref{theorem_third_maximal_subgroups}}

\begin{proof}[Proof of Theorem \ref{theorem_maximal_subgroups}]
	Suppose that the theorem is false, and let $G$ be a minimal counterexample.
	
	Clearly, $G$ has a non-trivial maximal subgroup $M$. By hypothesis, $M$ is an $IC\Phi$-subgroup of $G$. Lemma \ref{ICPhi_not_simple} implies that $G$ is not simple. 
	
	Let $N$ be a proper non-trivial normal subgroup of $G$, and let $N \le M \le G$ such that $M/N$ is a maximal subgroup of $G/N$. Then $M$ is a maximal subgroup of $G$. So $M$ is an $IC\Phi$-subgroup of $G$. Lemma \ref{2.1} (2) implies that $M/N$ is an $IC\Phi$-subgroup of $G/N$. Since $M$ was arbitrarily chosen, it follows that any maximal subgroup of $G/N$ is an $IC\Phi$-subgroup of $G/N$. The minimality of $G$ implies that $G/N$ is nilpotent. 
	
	It follows that $(G/N)' = G'N/N$ is a proper subgroup of $G/N$. This implies that $G'$ is a proper subgroup of $G$. So there is a maximal subgroup $M$ of $G$ with $G' \le M$. By hypothesis, $M$ is an $IC\Phi$-subgroup of $G$. Lemma \ref{ICPhi_nilpotent_2} implies that $G$ is nilpotent. This is a contradiction to the choice of $G$, and so the proof is complete.  
\end{proof}

\begin{proof}[Proof of Theorem \ref{theorem_second_maximal_subgroups}]
Suppose that the theorem is false, and let $G$ be a (not necessarily minimal) counterexample. 

Let $M$ be a maximal subgroup of $G$. By hypothesis, every maximal subgroup of $M$ is an $IC\Phi$-subgroup of $G$. Lemma \ref{2.1} (1) implies that any maximal subgroup of $M$ is an $IC\Phi$-subgroup of $M$. So $M$ is nilpotent by Theorem \ref{theorem_maximal_subgroups}. Since $M$ was arbitrarily chosen, it follows that any maximal subgroup of $G$ is nilpotent. Consequently, $G$ is minimal non-nilpotent. 

By Lemma \ref{minimal_non_nilpotent_groups} (1), we have $|G| = p^aq^b$ with distinct prime numbers $p$, $q$ and positive integers $a$, $b$, where $G$ has a normal Sylow $p$-subgroup $P$ and cyclic Sylow $q$-subgroups. 

We claim that $b = 1$. Assume, for sake of contradiction, that $b \ge 2$. Clearly, $G/P$ is cyclic with order $q^b$. Let $P \le H < G$ such that $H/P$ is the unique $2$-maximal subgroup of $G/P$. Then $H$ is a $2$-maximal subgroup of $G$. By hypothesis, $H$ is an $IC\Phi$-subgroup of $G$, and we have $G' \le P \le H$. Lemma \ref{ICPhi_nilpotent_2} implies that $G$ is nilpotent. This contradiction shows that $b = 1$, as claimed.

It follows that any maximal subgroup of $P$ is a $2$-maximal subgroup of $G$. Consequently, any maximal subgroup of $P$ is an $IC\Phi$-subgroup of $G$. Also, we have $|P| > p$, since otherwise $G$ would not possess a non-trivial $2$-maximal subgroup. Applying Theorem \ref{theorem_first_paper}, we conclude that $G$ is $p$-nilpotent. Therefore, $G$ has a normal Sylow $q$-subgroup. Consequently, any Sylow subgroup of $G$ is normal in $G$. It follows that $G$ is nilpotent. This contradiction completes the proof. 
\end{proof}

We need the following lemma to prove Theorem \ref{theorem_third_maximal_subgroups}. 

\begin{lemma}
	\label{lemma_for_theorem_third_maximal_subgroups} 
Let $G$ be a group such that any $3$-maximal subgroup of $G$ is an $IC\Phi$-subgroup of $G$. Then $G$ is solvable. 
\end{lemma}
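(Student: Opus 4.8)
The plan is to argue by contradiction and induction on the group order, using the previously established structural results to pin down the possible shape of a minimal counterexample. Suppose the lemma fails and let $G$ be a non-solvable group of smallest order in which every $3$-maximal subgroup is an $IC\Phi$-subgroup of $G$. First I would pass to proper subgroups: if $L < G$, then by Lemma \ref{2.1} (1) every $3$-maximal subgroup of $L$ is an $IC\Phi$-subgroup of $L$, so by minimality $L$ is solvable; hence $G$ is a minimal non-solvable group. It is classical that a minimal non-solvable (more precisely, minimal simple, after quotienting by the solvable radical) group is close to simple — so the next step is to show $G$ itself is simple. If $1 \ne N \lhd G$ is proper, then $G/N$ and $N$ are both solvable by minimality (for $G/N$ one uses Lemma \ref{2.1} (2) to push $3$-maximal subgroups down, being slightly careful that a subgroup $M/N$ with $M$ a $3$-maximal subgroup of $G$ containing $N$ is indeed $3$-maximal in $G/N$), forcing $G$ solvable, a contradiction. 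So $G$ is a non-abelian simple group.

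Now I would exploit the existence of a non-trivial $IC\Phi$-subgroup to contradict simplicity via Lemma \ref{ICPhi_not_simple}. The content here is to produce a proper non-trivial $3$-maximal subgroup of $G$: one needs $G$ to have a chain $1 < H_1 < H_2 < H_3 = G$ of length three with each term maximal in the next, i.e. that some maximal subgroup of $G$ has a maximal subgroup that in turn has a maximal subgroup which is non-trivial — and also that such a bottom term can be chosen non-trivial. If $G$ has a $3$-maximal subgroup that is both proper and non-trivial, then it is an $IC\Phi$-subgroup of $G$ by hypothesis, and Lemma \ref{ICPhi_not_simple} contradicts the simplicity of $G$. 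So the remaining case is that every $3$-maximal subgroup of $G$ is trivial, i.e. the trivial subgroup $1$ is a $3$-maximal subgroup of $G$ and $G$ has no non-trivial one; but then Lemma \ref{lemma_third_maximal_subgroups} gives $|G| = pqr$ for primes $p,q,r$, so $|G|$ has order a product of at most three primes and is therefore solvable (a group whose order is a product of three, not necessarily distinct, primes is solvable) — contradicting non-solvability. Either way we reach a contradiction, completing the proof.

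The main obstacle I anticipate is the bookkeeping around the $3$-maximal condition under quotients and under passage to subgroups — specifically making sure that when $N \lhd G$ and $M/N$ is $3$-maximal in $G/N$ the preimage $M$ is genuinely $3$-maximal in $G$ (one lifts the defining chain through $N$, which works because maximality is preserved under the correspondence theorem), and conversely that every $3$-maximal subgroup of $G/N$ arises this way. A secondary subtlety is the dichotomy "$G$ has a proper non-trivial $3$-maximal subgroup" versus "$1$ is $3$-maximal and there are no non-trivial ones": one must check these are genuinely the only two possibilities for a group that is not too small, and that the borderline small cases ($|G|$ a product of at most three primes) are handled — but those are solvable outright, so they cause no trouble. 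Everything else is a routine application of Lemmas \ref{2.1}, \ref{ICPhi_not_simple}, and \ref{lemma_third_maximal_subgroups} together with the solvability of groups of order $pqr$.
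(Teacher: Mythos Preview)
Your argument has a genuine gap in the step where you pass to proper subgroups. You claim that if $L < G$ then every $3$-maximal subgroup of $L$ is an $IC\Phi$-subgroup of $L$ ``by Lemma \ref{2.1} (1)'', so that by minimality $L$ is solvable. But Lemma \ref{2.1} (1) only lets you descend from $G$ to $L$ once you already know the subgroup is an $IC\Phi$-subgroup of $G$, and a $3$-maximal subgroup of $L$ need \emph{not} be a $3$-maximal subgroup of $G$. Indeed, if $L$ is maximal in $G$, a $3$-maximal subgroup of $L$ is a $4$-maximal subgroup of $G$, about which the hypothesis says nothing. So the hypothesis does not pass to arbitrary proper subgroups, and you cannot invoke minimality of $G$ to conclude that $L$ (in particular a proper normal subgroup $N$) is solvable.

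The paper repairs exactly this point by a different device: rather than trying to apply the lemma inductively to a maximal subgroup $M$, it observes that every $2$-maximal subgroup of $M$ is a $3$-maximal subgroup of $G$, hence an $IC\Phi$-subgroup of $G$, hence of $M$ by Lemma \ref{2.1} (1). One then applies Theorem \ref{theorem_second_maximal_subgroups} (together with Lemma \ref{lemma_second_maximal_subgroups} for the small cases) to conclude that $M$ is nilpotent, and in particular solvable. The rest of your outline --- the quotient argument via Lemma \ref{2.1} (2), the appeal to Lemma \ref{ICPhi_not_simple} once a non-trivial $3$-maximal subgroup is in hand, and the use of Lemma \ref{lemma_third_maximal_subgroups} to dispose of the case where none exists --- matches the paper and is correct; only the inheritance to proper subgroups needs to be rerouted through the $2$-maximal theorem as above.
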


\begin{proof}
Suppose that the lemma is false, and let $G$ be a minimal counterexample. We will derive a contradiction in several steps. 

\medskip 

(1) $G$ has a non-trivial $2$-maximal subgroup. 

Clearly, $G$ has a non-trivial maximal subgroup $M$. Any maximal subgroup of $M$ is a $2$-maximal subgroup of $G$. Consequently, the set of $2$-maximal subgroups of $G$ is not empty. If $1$ is the only $2$-maximal subgroup of $G$, then $G$ is solvable as a consequence of Lemma \ref{lemma_second_maximal_subgroups}, a contradiction. So $G$ has a non-trivial $2$-maximal subgroup. 

\medskip

(2) $G$ has a non-trivial $3$-maximal subgroup. 

By (1), $G$ has a non-trivial $2$-maximal subgroup, say $H$. Any maximal subgroup of $H$ is a $3$-maximal subgroup of $G$. Consequently, the set of $3$-maximal subgroups of $G$ is not empty. If $1$ is the only $3$-maximal subgroup of $G$, then $G$ is solvable as a consequence of Lemma \ref{lemma_third_maximal_subgroups}, a contradiction. So $G$ has a non-trivial $3$-maximal subgroup. 

\medskip

(3) $G$ is not simple. 

By (2), $G$ has a non-trivial $3$-maximal subgroup. By hypothesis, any $3$-maximal subgroup of $G$ is an $IC\Phi$-subgroup of $G$. Consequently, $G$ has a proper non-trivial $IC\Phi$-subgroup. Lemma \ref{ICPhi_not_simple} implies that $G$ is not simple. 

\medskip

(4) Any proper subgroup of $G$ is solvable. 

Let $M$ be a maximal subgroup of $G$. It suffices to show that $M$ is solvable. If $M$ has no non-trivial maximal subgroup, then $M$ has prime order, and so $M$ is solvable. 

Suppose now that $M$ has a non-trivial maximal subgroup. Then the set of $2$-maximal subgroups of $M$ is not empty. If $1$ is the only $2$-maximal subgroup of $M$, then $M$ is solvable as a consequence of Lemma \ref{lemma_second_maximal_subgroups}. 

By hypothesis, any $2$-maximal subgroup of $M$ is an $IC\Phi$-subgroup of $G$. Lemma \ref{2.1} (1) implies that any $2$-maximal subgroup of $M$ is an $IC\Phi$-subgroup of $M$. So, if $M$ has a non-trivial $2$-maximal subgroup, then Theorem \ref{theorem_second_maximal_subgroups} implies that $M$ is nilpotent and hence solvable. 

\medskip 

(5) The final contradiction. 

By (3), $G$ has a proper non-trivial normal subgroup, say $N$. Let $N \le H \le G$ such that $H/N$ is a $3$-maximal subgroup of $G/N$. Then $H$ is a $3$-maximal subgroup of $G$. So, by hypothesis, $H$ is an $IC\Phi$-subgroup of $G$. Lemma \ref{2.1} (2) implies that $H/N$ is an $IC\Phi$-subgroup of $G/N$. Since $H$ was arbitrarily chosen, it follows that any $3$-maximal subgroup of $G/N$ is an $IC\Phi$-subgroup of $G/N$. The minimality of $G$ implies that $G/N$ is solvable. Also, $N$ is solvable by (4). It follows that $G$ is solvable. This final contradiction completes the proof. 
\end{proof}

\begin{proof}[Proof of Theorem \ref{theorem_third_maximal_subgroups}]
	Let $G$ be a non-nilpotent group such that $G$ has a non-trivial $3$-maximal subgroup and such that any $3$-maximal subgroup of $G$ is an $IC\Phi$-subgroup of $G$. Our task is to show that $G$ is isomorphic to $SL_2(3)$. We accomplish the proof step by step. 
	
	\medskip
	
	(1) If $M$ is a non-nilpotent maximal subgroup of $G$, then there exist distinct prime numbers $p$ and $q$ such that $|M| = pq$ and such that $|G:M|$ is a power of $p$. 
	
	Let $M$ be a non-nilpotent maximal subgroup of $G$. Then $M$ has a non-trivial maximal subgroup, and so the set of $2$-maximal subgroups of $M$ is not empty. By hypothesis, any $2$-maximal subgroup of $M$ is an $IC\Phi$-subgroup of $G$. Lemma \ref{2.1} (1) implies that any $2$-maximal subgroup of $M$ is an $IC\Phi$-subgroup of $M$. Since $M$ is not nilpotent, it follows from Theorem \ref{theorem_second_maximal_subgroups} that the trivial subgroup $1$ is the only $2$-maximal subgroup of $M$. Applying Lemma \ref{lemma_second_maximal_subgroups}, we conclude that $|M| = pq$, where $p$ and $q$ are prime numbers. We have $p \ne q$ since $M$ is not nilpotent. 
	
	By Lemma \ref{lemma_for_theorem_third_maximal_subgroups}, $G$ is solvable. So, by Lemma \ref{lemma_maximal_subgroups_solvable_groups}, the index $|G : M|$ is a power of a prime number $r$. Assume that $r \not\in \lbrace p,q \rbrace$. As a solvable group, $G$ has a Sylow system. Hence, there exist $P \in \mathrm{Syl}_p(G)$, $Q \in \mathrm{Syl}_q(G)$ and $R \in \mathrm{Syl}_r(G)$ such that $P$, $Q$ and $R$ are pairwise permutable. Since $|P| = p$ and $|Q| = q$, we have that $R$ is maximal in $RQ$ and that $RQ$ is maximal in $G$. Consequently, any maximal subgroup of $R$ is a $3$-maximal subgroup of $G$. Therefore, any maximal subgroup of $R$ is an $IC\Phi$-subgroup of $G$. We have $|R| > r$, since otherwise $G$ would not possess a non-trivial $3$-maximal subgroup. Applying Theorem \ref{theorem_first_paper}, we conclude that $G$ is $r$-nilpotent. This implies that $M = O_{r'}(G) \trianglelefteq G$. Since $|G:M| = |R| > r$, it follows that $M$ cannot be maximal in $G$. This contradiction shows that $r \in \lbrace p,q \rbrace$, and without loss of generality, we may assume that $r = p$.
	
	\medskip
	
	(2) $G$ is minimal non-nilpotent.
	
	Assume that $G$ is not minimal non-nilpotent. Then $G$ has a non-nilpotent maximal subgroup $M$. By (1), there exist distinct prime numbers $p$ and $q$ such that $|M| = pq$ and such that $|G:M|$ is a power of $p$.
	
	Let $P$ be a Sylow $p$-subgroup of $G$, and let $Q$ be a Sylow $q$-subgroup of $G$. Then $|G:P| = q$, and so we have $|P| \ge p^3$, since otherwise $G$ would not possess a non-trivial $3$-maximal subgroup. 
	
	Assume that $Q \trianglelefteq G$. Then $M/Q$ is a minimal subgroup of $G/Q$, and we have $|G/Q| = |P| \ge p^3$. This is a contradiction to the maximality of $M$ in $G$. Consequently, $Q$ is not normal in $G$. 
	
	Any $2$-maximal subgroup of $P$ is a $3$-maximal subgroup of $G$. So, by hypothesis, any $2$-maximal subgroup of $P$ is an $IC\Phi$-subgroup of $G$. If $p$ is odd or if $p = 2$ and $P \not\cong Q_8$, then Theorems \ref{theorem_first_paper} and \ref{theorem_Q8_free_groups} imply that $G$ is $p$-nilpotent, which is impossible since $Q$ is not normal in $G$. So we have $p = 2$ and $P \cong Q_8$.
	
	Now let $N$ be a minimal normal subgroup of $G$. Since $G$ is solvable, $N$ is a primary subgroup of $G$. Also $N \not\le Q$ since $|Q| = q$ and $Q$ is not normal in $G$. So we have $N \le P$. 
	
	Since $P \cong Q_8$, we have that $Z(P)$ is the only subgroup of $P$ with order $2$. Hence $Z(P) \le N$, and the minimal normality of $N$ in $G$ implies that $N = Z(P)$. It follows that $N$ is the only subgroup of $G$ with order $2$. Consequently, $N \in \mathrm{Syl}_2(M)$, which easily implies that $M$ is nilpotent. This is a contradiction, and so (2) holds.

	\medskip
	
	(3) $G$ has a normal Sylow $2$-subgroup $P$. We have $P \cong Q_8$ and $|G:P| = 3$.  
	
	In order to prove this, we use similar arguments as in the proof of Theorem \ref{theorem_second_maximal_subgroups}. 
	
	By (2), $G$ is minimal non-nilpotent. So, by Lemma \ref{minimal_non_nilpotent_groups} (1), we have $|G| = p^aq^b$ with distinct prime numbers $p$, $q$ and positive integers $a$, $b$, where $G$ has a normal Sylow $p$-subgroup $P$ and cyclic Sylow $q$-subgroups.
	
	Assume that $b \ge 3$. Clearly, $G/P$ is cyclic with order $q^b$. Let $P \le H < G$ such that $H/P$ is the unique $3$-maximal subgroup of $G/P$. Then $H$ is a $3$-maximal subgroup of $G$. By hypothesis, $H$ is an $IC\Phi$-subgroup of $G$, and we have $G' \le P \le H$. Lemma \ref{ICPhi_nilpotent_2} implies that $G$ is nilpotent. This contradiction shows that $b \le 2$. 
	
	Assume that $b = 2$. Then any maximal subgroup of $P$ is $3$-maximal in $G$, and so we have that any maximal subgroup of $P$ is an $IC\Phi$-subgroup of $G$. We have $|P| > p$, since otherwise $G$ would not possess a non-trivial $3$-maximal subgroup. Theorem \ref{theorem_first_paper} implies that $G$ is $p$-nilpotent. But $G$ is also $q$-nilpotent, and so $G$ is nilpotent. This contradiction shows that $b = 1$. 
	
	It follows that any $2$-maximal subgroup of $P$ is $3$-maximal in $G$. Therefore, any $2$-maximal subgroup of $P$ is an $IC\Phi$-subgroup of $G$. We have $|P| \ge p^3$, since otherwise $G$ would not possess a non-trivial $3$-maximal subgroup. If $p$ is odd or if $p = 2$ and $P \not\cong Q_8$, then Theorems \ref{theorem_first_paper} and \ref{theorem_Q8_free_groups} imply that $G$ is $p$-nilpotent, which leads to a contradiction as above. So we have $p = 2$ and $P \cong Q_8$. 
	
	If $U$ is a non-trivial proper subgroup of $P$, then $U$ is a cyclic $2$-group, and this implies that $N_G(U)/C_G(U)$ is a $2$-group. Since $G$ is not $2$-nilpotent, Lemma \ref{Frobenius_p_nilpotence} implies that $G/C_G(P)$ is not a $2$-group. Since $G/C_G(P)$ is isomorphic to a subgroup of $\mathrm{Aut}(P)$, and since $\mathrm{Aut}(P)$ has order $24$ by Lemma \ref{automorphisms_Q8}, we conclude that $|G/C_G(P)|$ is divisible $3$. Therefore, $|G|$ is divisible by $3$. Since $P$ has prime index in $G$, it follows that $|G:P| = 3$. 
	
	\medskip
	
	(4) Conclusion. 
	
	Applying Lemma \ref{characterization_SL_2_3}, we deduce from (3) that $G \cong SL_2(3)$. So we have reached the desired conclusion. 
	\end{proof}

	\end{document}